\def\mG{\mathsf{G}}	
\def\mV{\mathsf{V}}
\def\mE{\mathsf{E}}
\def\mv{\mathsf{v}}
\def\mw{\mathsf{w}}
\def\mz{\mathsf{z}}
\def\mf{\mathsf{f}}
\def\me{\mathsf{e}}
\DeclareMathOperator{\diag}{diag}
\DeclareMathOperator{\Real}{Re}
\DeclareMathOperator{\Ima}{Im}
\DeclareMathOperator{\Id}{Id}
\newtheorem{lemma}{Lemma}[section]
\newtheorem{cor}[lemma]{Corollary}
\newtheorem{theorem}[lemma]{Theorem}
\newtheorem{rem}[lemma]{Remark}
\newtheorem{prop}[lemma]{Proposition}
\newtheorem{ex}[lemma]{Example}
\newtheorem{defi}[lemma]{Definition}
\newtheorem{ass}[lemma]{Assumption}
\def\:{\thinspace:\thinspace}
\def\linie{\vrule height 14pt depth 5pt}
\newcommand\sign{\mathop{\rm sign}}
\newcommand{\bound}{\mathcal{L}}
\newcommand{\f}{\mathfrak{a}}
\newcommand{\h}{\widetilde{H}}
\newcommand{\R}{\mathbb{R}}
\newcommand{\C}{\mathbb{C}}
\newcommand{\N}{\mathbb{N}}
\newcommand{\Z}{\mathbb{Z}}
\newcommand{\cmv}{{c_\mV}}
\newcommand{\dist}{\emph{dist\,}}
\numberwithin{equation}{section}
\title{Bi-Laplacians on graphs and networks}
\subjclass[2010]{35K50, 47D06, 46G10}
\keywords{Quantum graphs; Differential and difference operators of higher order; Positive semigroups of bounded linear operators; Boundary conditions}
\author{Federica Gregorio}
\address{Federica Gregorio, Lehrgebiet Analysis, Fakult\"at Mathematik und Informatik, Fern\-Universit\"at in Hagen, D-58084 Hagen, Germany \textrm{ and  }Dipartimento di Ingegneria dell’Informazione, Ingegneria Elettrica e Matematica Applicata, Università degli Studi di Salerno, Via Giovanni Paolo II, 132, 84084 Fisciano (Sa), Italy}
\email{fgregorio@unisa.it}
\author{Delio Mugnolo}
\address{Delio Mugnolo, Lehrgebiet Analysis, Fakult\"at Mathematik und Informatik, Fern\-Universit\"at in Hagen, D-58084 Hagen, Germany}
\email{delio.mugnolo@fernuni-hagen.de}
\thanks{
The first author is member of the Gruppo Nazionale per l'Analisi Matematica,
la Probabilit\`a e le loro Applicazioni (GNAMPA) of the Istituto Nazionale di Alta Matematica
(INdAM). The second author has been partially supported by the Center for Interdisciplinary Research (ZiF) in Bielefeld, Germany, within the framework of the cooperation group on ``Discrete and continuous models in the theory of networks''; and by the Deutsche Forschungsgemeinschaft (Grant 397230547). \\
The authors are grateful to Jochen Glück (Ulm) for countless illuminating discussions.}
\begin{document}

\begin{abstract}
We study the differential operator $A=\frac{d^4}{dx^4}$ acting on a connected network $\mathcal{G}$ along with $\mathcal L^2$, the square of the discrete Laplacian acting on a connected discrete graph $\mG$. For both operators we discuss well-posedness of the associated {linear} parabolic problems
\[
 \frac{\partial u}{\partial t}=-Au,\qquad\frac{df}{dt}=-\mathcal L^2 f,
\]
on $L^p(\mathcal{G})$ or $\ell^p(\mV)$, respectively, for $1\leq p\leq\infty$. In view of the well-known lack of parabolic maximum principle for all elliptic differential operators of order $2N$ for $N>1$, our most surprising finding is that, after some transient time, the parabolic equations driven by $-A$ may display Markovian features, depending on the imposed transmission conditions in the vertices. Analogous results seem to be unknown in the case of general domains and even bounded intervals. Our analysis is based on a detailed study of bi-harmonic functions complemented by simple combinatorial arguments. We elaborate on analogous issues for the discrete bi-Laplacian; a characterization of complete graphs in terms of the Markovian property of the semigroup generated by  $-\mathcal L^2$ is also presented.
\end{abstract}

\maketitle

\section{Introduction}

Our main aim in this article is to discuss qualitative properties of one-dimensional parabolic evolution equations associated with the bi-Laplacian, focusing on the settings of graphs and networks. We are especially going to investigate how different transmission conditions in the vertices of a network can arouse rather different behaviors of solutions to such partial differential equations.

Ever since the pioneering work of Bernoulli and Euler, linear partial differential equations that are second-order in time and fourth-order in space have been fundamental tools in the modeling of large flexible bodies. The application of these models makes it particularly natural to study such linear \textit{beam equations}
\[
\frac{\partial^2 u}{\partial t^2}(t,x)=-\Delta^2 u(t,x),\qquad t\ge 0,\ x\in \Omega\ ,
\]
on \textit{bounded} domains $\Omega$.
Just like in the case of the classical (second-order in space) wave equation, no universally appropriate set of boundary conditions for the beam equation exist: instead, boundary conditions have to be adapted to the underlying physical model. 
In the simplest 1-dimensional case of a (thin, linear) beam $\Omega=(0,1)$, the most usual conditions are
\begin{itemize}
\item (clamped) $u(t,i)=u_x(t,i)=0$;
\item (free) $u_{xx}(t,i)=u_{xxx}(t,i)=0$;
\item (hinged) $u(t,i)=u_{xx}(t,i)=0$;
\item (sliding) $u_x(t,i)=u_{xxx}(t,i)=0$;
\end{itemize}
at both ends $i\in \{0,1\}$. Their physical meaning is described in detail, e.g., in the survey~\cite[\S~2]{HanBenWei99}.

In the Hilbert space setting, bi-Laplacians with hinged and sliding conditions, in particular, are the square of the common Laplacians $\Delta_D, \Delta_N$ with Dirichlet or Neumann boundary conditions, respectively, and therefore both their spectrum and their parabolic properties are easily studied: indeed, both $i\Delta_D,i\Delta_N$ are group generators by Stone's theorem, hence (minus) their squares $-\Delta^2_D,-\Delta^2_N$ generate cosine operator functions
\[
\cos(-t\Delta^2_D)=\frac{e^{it\Delta_D}+e^{-it\Delta_D}}{2},\quad \cos(-t\Delta^2_N)=\frac{e^{it\Delta_N}+e^{-it\Delta_N}}{2},
\]
(\cite[Exa.~3.14.15]{AreBatHie01}) and the corresponding second-order (in time) evolution equation is well-posed.
Furthermore, it is well-known that cosine operator function generators -- whether self-adjoint or not -- enjoy good operator theoretic properties; in particular, they also generate analytic semigroups
\[
e^{-t\Delta^2_D}=\int_0^\infty \frac{e^{-\frac{s^2}{4t}}}{\sqrt{\pi t}}\cos(-t\Delta^2_D) ds,\quad 
e^{-t\Delta^2_N}=\int_0^\infty \frac{e^{-\frac{s^2}{4t}}}{\sqrt{\pi t}}\cos(-t\Delta^2_N) ds,
\]
\cite[Cor.~3.7.15]{AreBatHie01}. This shows well-posedness  of the parabolic equation
\begin{equation}\label{eq:parab4}
u_{t}(t,x)=-\Delta^2 u(t,x),\qquad t\ge 0,\ x\in \Omega,
\end{equation}
with either hinged or sliding boundary conditions.

Such fourth-order parabolic equations display remarkable features that have perhaps not yet received their fair share of attention in the evolution equation community: apart from arising as singular limits of telegraph equations as both the propagation speed and the damping tend to infinity (an observation that goes back to Hadamard, see~\cite{Eng92} for a comprehensive discussion), parabolic equations associated with lower order perturbations of fourth-order operators have been discussed in relation with simplified versions of thin film, Cahn--Hilliard, and Kuramoto--Sivashinsky equations, e.g.\ in~\cite{LauPug00,KohOtt02,GiaKnuOtt08} or in the recent monograph~\cite{GalMitPoz14}; whereas models featuring fourth order noise terms appear in high energy physics~\cite{MueShoWon05}. It is well known that the non-constant solutions of~\eqref{eq:parab4} cannot be positive on $\R_+\times \Omega$, hence in particular they cannot be interpreted as probability distributions; it is therefore all the more intriguing that several stochastic interpretations of~\eqref{eq:parab4} have been proposed so far: a connection with the expected value of the solution of the Schrödinger equation $\frac{\partial u}{\partial t} = i\Delta u$ has been suggested in~\cite{GriHer69}, whereas it has been argued in~\cite{Kry60,Hoc78,Fun79} that~\eqref{eq:parab4} can be understood in terms of pseudo-processes that generalize standard Brownian motions; a further stochastic interpretation that elucidates connections with random walks in the complex plane has been pointed out in~\cite{BonMaz15}.

To the best of our knowledge, the semigroup generated by minus the  bi-Laplacian 
has been first systematically studied by Davies, with a particular focus on the lower dimensional case in~\cite[\S~6]{Dav95} and in~\cite{Dav95b}. Let us summarize his findings as follows.

\begin{theorem}[Davies 1995]\label{thm:davies}
Consider the operator 
\[
H:=\frac{d^4}{dx^4}
\]
on $L^2(\R)$ with maximal domain $D(H)=H^4(\R)$. Then $-H$ is self-adjoint and positive semidefinite on $L^2(\R)$, hence it generates an analytic semigroup of angle $\frac{\pi}{2}$ on $L^2(\R)$. This semigroup maps $L^2(\R)$ to $L^\infty(\R)$ and its integral kernel $p(t,\cdot,\cdot)$ is given by
\[
p(t,x,y)=\frac{1}{2\pi}\int_\R e^{i(x-y)\xi-\xi^4 t}\ d\xi;
\]
$p(t,\cdot,\cdot)$ lies in the Schwartz space ${\mathcal S}(\R\times\R)$  for all $t\in \C_+$. Thus, $(e^{-tH})_{t\ge 0}$ extends to a consistent family of strongly continuous semigroups on $L^p(\R)$ that are analytic of angle $\frac{\pi}{2}$ for all $p\in [1,\infty)$; the spectra of their generators are independent of $p$.
This semigroup satisfies
\[
\|e^{-tH}\|_2=1\quad \hbox{and}\quad \|e^{-tH}\|_1=\|e^{-sH}\|_1>1\quad\hbox{ for all }t,s>0
\]
as well as
\[
\|e^{-tH}\|_{2\to\infty}\le Ct^{-\frac{1}{8}}e^{\frac{t}{2}},\qquad t>0 .
\]
\end{theorem}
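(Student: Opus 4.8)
The plan is to establish each assertion by Fourier analysis, since $H$ is translation-invariant and the symbol is simply $\xi^4$. First I would observe that $-H$ has symbol $-\xi^4 \le 0$, so conjugating by the Fourier transform turns $-H$ into multiplication by $-\xi^4$ on $L^2(\R)$; this is self-adjoint and non-positive, and since $-\xi^4$ takes values in $(-\infty,0]$, the operator generates a bounded analytic semigroup of the maximal angle $\frac{\pi}{2}$ (the sector of analyticity being determined by the sector containing the numerical range $(-\infty,0]$, which here is the whole left half-plane and its boundary). Thus $e^{-tH}$ corresponds under Fourier transform to multiplication by $e^{-\xi^4 t}$, which for $t \in \C_+$ is a Schwartz function of $\xi$; inverting the Fourier transform yields the stated kernel $p(t,x,y)=\frac{1}{2\pi}\int_\R e^{i(x-y)\xi - \xi^4 t}\,d\xi$, and differentiation under the integral sign together with the rapid decay of $e^{-\xi^4 t}$ shows $p(t,\cdot,\cdot)\in\mathcal S(\R\times\R)$ for each $t\in\C_+$. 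Convolution with a Schwartz kernel maps $L^2$ into $L^\infty$ (indeed into every $L^q$), giving the smoothing statement.

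Next I would address the $L^p$-theory. Since $p(t,\cdot,\cdot)$ is Schwartz, in particular $p(t,\cdot-\cdot)\in L^1(\R)$ with $\|p(t,\cdot)\|_{L^1}$ finite, Young's inequality shows that convolution against $p(t,\cdot)$ is bounded on every $L^p(\R)$, $1\le p\le\infty$, and the semigroup law on $L^2$ transfers to the convolution kernels (consistency), so one obtains a consistent family $(e^{-tH})_{t\ge0}$ on all $L^p(\R)$. Strong continuity for $p\in[1,\infty)$ follows from the fact that $p(t,\cdot)$ is an approximate identity as $t\downarrow 0$ (its total mass $\int_\R p(t,x)\,dx = e^{0}=1$ by evaluating the Fourier integral at $\xi=0$, and its mass concentrates near $0$ by the scaling $p(t,x)=t^{-1/4}p(1,t^{-1/4}x)$). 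Analyticity of angle $\frac\pi2$ on $L^p$ follows because the kernel $p(z,\cdot)$ remains in $L^1$ for all $z\in\C_+$ with norm locally bounded, so the $L^2$-analyticity extends by consistency and a Vitali/uniform-boundedness argument. The $p$-independence of the spectrum is a standard consequence of the consistency of the resolvents together with the analytic-semigroup bounds, via the Riesz--Thorin interpolation / Stein argument (cf.\ the general principle that consistent analytic semigroups on the $L^p$-scale have $p$-independent spectrum).

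Then I would compute the norms. The identity $\|e^{-tH}\|_{2} = 1$ is immediate from the functional calculus: the multiplier $e^{-\xi^4 t}$ has sup-norm $1$, attained in the limit $\xi\to0$. For the $L^1$-norm, $\|e^{-tH}\|_{1} = \|p(t,\cdot)\|_{L^1(\R)}$, and by the scaling relation $p(t,x) = t^{-1/4}\,p(1,t^{-1/4}x)$ this equals $\|p(1,\cdot)\|_{L^1}$, independent of $t>0$; the strict inequality $\|p(1,\cdot)\|_{L^1}>1$ follows because $p(1,\cdot)$ is not non-negative — indeed $\int_\R p(1,x)\,dx = 1$ while $p(1,\cdot)$ changes sign (its Fourier transform $e^{-\xi^4}$ is not positive-definite as a measure's transform would have to be, equivalently $p(1,\cdot)$ is a genuinely oscillatory Schwartz function, as one sees from the asymptotics of the Pearcey-type integral), so $\|p(1,\cdot)\|_{L^1} > |\int p(1,x)\,dx| = 1$. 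Finally, for the $(2\to\infty)$-bound, by Fourier inversion and Cauchy--Schwarz $\|e^{-tH}f\|_\infty \le \frac{1}{2\pi}\|\hat f\|_2\|e^{-\xi^4 t}\|_2 = C\|f\|_2\,\bigl(\int_\R e^{-2\xi^4 t}\,d\xi\bigr)^{1/2} = C' t^{-1/8}\|f\|_2$ by the substitution $\eta = t^{1/4}\xi$; the extra factor $e^{t/2}$ is harmless slack (one can absorb the pure power bound into it, or simply note $t^{-1/8}\le t^{-1/8}e^{t/2}$).

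The main obstacle is the strict inequality $\|e^{-tH}\|_1 > 1$: everything else is soft Fourier analysis and scaling, but proving $\|p(1,\cdot)\|_{L^1} > 1$ requires genuinely exhibiting the sign change of the kernel. The cleanest route is to note that if $p(1,\cdot)\ge0$ then, being an $L^1$ function with $\int p(1,x)\,dx=1$, its Fourier transform $e^{-\xi^4}$ would be positive-definite in the sense of Bochner; but $e^{-\xi^4}$ is not positive-definite — for instance because a positive-definite function that is also $C^2$ must satisfy $|\phi''(0)|\le$ (a bound forcing Gaussian-type decay to be the extremal case), or more concretely because the inverse transform $p(1,\cdot)$ can be shown to be negative somewhere by a stationary-phase / contour-deformation estimate showing it oscillates with a decaying envelope $\sim |x|^{-1/3}$ (for large $|x|$, after the substitution making it a standard oscillatory integral with degenerate phase). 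Either way, sign change of $p(1,\cdot)$ plus unit integral forces $\|p(1,\cdot)\|_1>1$, and the $t$-independence then follows from scaling as above.
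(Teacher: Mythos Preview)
The paper does not actually prove this theorem: it is stated as a summary of Davies' results from \cite{Dav95,Dav95b}, with no proof given (the passage introducing it reads ``Let us summarize his findings as follows''). So there is no proof in the paper to compare your attempt against.

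That said, your Fourier-analytic route is the standard and correct one, and essentially what Davies does. A couple of points worth tightening. First, your argument for the sign change of $p(1,\cdot)$ via Bochner is a little vague as written; the quickest clean version is a moment computation: if $p(1,\cdot)\ge 0$ with $\int p(1,x)\,dx=1$, then $\widehat{p(1,\cdot)}(\xi)=e^{-\xi^4}$ would have second derivative at $0$ equal to $-\int x^2 p(1,x)\,dx\le 0$, but in fact $\frac{d^2}{d\xi^2}e^{-\xi^4}\big|_{\xi=0}=0$, forcing $\int x^2 p(1,x)\,dx=0$ and hence $p(1,\cdot)=\delta_0$, which is absurd since $p(1,\cdot)$ is Schwartz. (Alternatively, the fourth derivative of $e^{-\xi^4}$ at $0$ equals $-24$, whereas it should equal $\int x^4 p(1,x)\,dx\ge 0$.) Second, for strong continuity on $L^1$ note that you are using an approximate identity that is \emph{not} non-negative; this is fine because the three standard hypotheses (unit integral, uniformly bounded $L^1$-norm, mass outside any neighbourhood of $0$ tending to $0$) all follow from the scaling $p(t,x)=t^{-1/4}p(1,t^{-1/4}x)$ and $p(1,\cdot)\in L^1$, but it is worth saying explicitly since the usual textbook statement assumes positivity.
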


In Theorem~\ref{thm:davies} as well as in the whole paper $\Vert T\Vert_{p\to q}$ stands for the operator norm of $T:L^p\to L^q$.

The integral kernel $p(t,x):=p(t,x,x)$ was studied by Hochberg, where on-diagonal estimates where studied; in particular, it was observed in~\cite[\S~2]{Hoc78} that $p(t,x)=p(1,t^{-\frac{1}{4}}x)t^{-\frac{1}{4}}$ with
\[p(1,x)\approx kx^{-\frac{1}{3}}\exp\left({-\frac{3}{8}\left(\frac{x^4}{4}\right)^\frac{1}{3}}\right)\cos\left(\frac{3\sqrt{3}}{8}\left(\frac{x^4}{4}\right)^\frac{1}{3} \right);
\]
for large $x$; here $k$ is a positive constant and the approximation holds up to lower order terms. This shows in particular an oscillatory character that prevents the integral kernel $p$ from being positive; indeed, $p$ changes sign infinitely often.

Davies then went on extending the theory to fourth order differential operators in divergence form with non-constant coefficients and proving that some of these properties hold also in higher dimensions, while some other fail.
Our aim in this paper is to consider a different extension of the 1-dimensional theory: we will glue together finitely many intervals in a graph-like fashion to form objects usually called \textit{network} or \textit{metric graph} in the literature, from~\cite{Lum80,KotSmi97} up to the recent compendia~\cite{BerKuc13,Mug14}. In doing so, we lose the possibility of finding a semigroup of convolution operators; the semigroup generated by minus the bi-Laplacian still consists of integral operators, but to the best of our knowledge its integral kernel is not explicitly known.

The properties of linear PDEs driven by the bi-Laplacian on networks of thin linear beams have been studied often in the literature, with a special focus on controllability and stabilization issues, see e.g.~\cite{LagLeuSch94,DekNic99}. Let us also mention that a theory of elasticity on discrete structures that can be described as graphs $\mG$ is well established and based on the linear dynamical system
\[
\frac{d^2f}{dt^2}(t,\mv)=-\mathcal Lf(t,\mv),\qquad t\ge 0,\ \mv\in \mV,
\]
where $\mathcal L$ is the discrete Laplacian on a graph $\mG$ with vertex set $\mV$. In analogy with the refinements of elasticity models that historically lead to study (space-continuous) beam equations,  it is then natural to study the properties of the operator associated with discretized beam equation: indeed, the discrete \textit{bi}-Laplacian $\mathcal L^2$ was discussed already in~\cite[\S~I.5]{CouFriLew28}. Unlike $\mathcal L$, however, its properties have not been intensively studied afterwards: we will discuss them along with (differential) bi-Laplacians on networks. In the case of second-order parabolic equations, heat semigroups on both graphs and networks are known to be associated with Dirichlet forms; likewise, we are going to show that also semigroups generated by bi-Laplacians on graphs and networks share many relevant features.

As already mentioned, there is no canonical choice of boundary conditions for the bi-Laplacian on domains; the same can be said about transmission conditions in the vertices of a networks. An early physical derivation of different vertex conditions on serially connected beams  has been obtained in~\cite{CheDelKra87}; later studies on this subject include~\cite{DekNic99,DekNic00,KiiKurUsm15}. 
In this paper we follow a somewhat reverse approach, studying infinitely many realizations of the differential operators, especially those that satisfy continuity in the vertices and hence, arguably, better mirror the connectivity of the graph; we then single out transmission conditions for the parabolic equation \textit{depending on} qualitative properties of the parabolic equation: we will be particularly interested in those conditions that induce behaviors analogous to those in Theorem~\ref{thm:davies} and, more generally, those that resemble the case of usual (second-order) diffusion equation. 

What qualitative properties are we interested in, since we wish to focus on parabolic equations? We have already mentioned that the integral kernel of the semigroup generated by the fourth derivative on $\mathbb R$ is known to be non-positive hence the semigroup is a non-positivity preserving semigroup: non-positive semigroup in short. More generally, parabolic maximum principles fail to hold for all diffusion-type problems associated with fourth-order differential equations, in sharp contrast to the second-order case. There follows the necessity to develop an alternative method -- based on ultracontractive estimates and developed already in~\cite{Dav95} -- in order to establish an $L^p$-theory. Lack of positivity also makes studying long-time behavior of solutions of (1.1) more complicated, since the classical (infinite-dimensional) Perron-Frobenius theory is no more available. It turns out that an effective proxy for positivity is \emph{eventual} positivity: may solutions to a Cauchy problem become positive for \textit{large enough} time provided the initial condition is positive?

Relying on the explicit formula of the kernel on $\R$, in \cite{GazGru08} the authors proved that for continuous, compactly supported, positive initial data $u_0$, the solution to the Cauchy problem \begin{equation*}\label{acp}
\begin{cases} \frac{\partial u}{\partial t}(t,x)=-\Delta^2u(t,x), &\qquad
 t\in  [0,\infty),\ x\in \R, \\ u(0,x)=u_0(x), &\qquad x\in\ \R.\end{cases}\end{equation*} 
 satisfies the following:
For any interval $I\subset\R$ there exists  $T_I=T_I(u_0)>0$ such that 
$u(t,x)>0$ for all  $t\geq T_I$ and $x\in I$; and there exists $\tau=\tau(u_0)>0$ such that for any $t>\tau$ there exists a $x_t\in\R$ such that $u(t,x_t)<0$. A generalization in \cite{FerGazGru08} covers the case of initial data $u_0$ that merely decay suitably fast as $|x|\to\infty$.
{ This implies local eventual positivity of solutions to~\eqref{eq:parab4}; such behavior was commonly believed to be generally true but had not been rigorously observed until an abstract setting for studying eventual positivity of solutions to parabolic problems was recently proposed by Daners, Glück and Kennedy. We will borrow their theory presented in \cite{DanGluKen16,DanGluKen16b} and apply it to the setting of compact networks.
 Additionally, we also study the issue of whether a semigroup satisfies an eventual sub-Markovian property, i.e., eventual $L^\infty$-contractivity on the top of eventual positivity. This seems to be new in the literature and of independent interest;  we stress that physical systems that become (sub-)Markovian after a transient time have often been observed, see e.g.~\cite{SuaSilOpp92,GnuHaa96,HufLinGal} and references therein.

We single out two instances that seem to have been overlooked in previous investigations on the bi-Laplacian: these instances correspond to the Friedrichs and Krein--von Neumann realizations of classical extension theory and are shown to have particularly good and particularly bad parabolic properties, respectively. Between these two extreme cases, we study Markovian properties of the semigroup generated by different realizations of the bi-Laplacian, depending on the transmission conditions and/or the network topology.

The paper is structured as follows. Section~\ref{sec:discrete} provides a short overview of the properties of the bi-Laplacian on discrete graphs: by elementary analytic and combinatorial methods we deduce several results that will serve as helpful analogies whenever turning to differential operators; in particular, a characterization of positivity of the semigroup $(e^{-t\mathcal L^2})_{t\ge 0}$ depending on the structure of the graph is given (this is in sharp contrast with positivity of discrete heat semigroup $(e^{-t\mathcal L})_{t\ge 0}$, which has been observed already in~\cite{Kat54,BeuDen59} to hold for all finite graphs). 
In Section~\ref{sec:gener} we introduce the network setting and  we fully characterize the self-adjoint realizations of the fourth derivative. In Section~\ref{sec:selfa} we obtain generation results in $L^2(\mathcal{G})$: several realizations that have appeared in the literature are shown to be special cases of our general theory. 
In Section~\ref{sec:contra} we focus on contractivity properties of the semigroup generated by $-\frac{d^4}{dx^4}$ in different $L^p$-spaces, $1\leq p\leq\infty$: this is mostly based on ideas borrowed from~\cite{Dav95} in the case of fourth-order parabolic equations on $\R$. In Section~\ref{sec:event-pos} we develop a fourth-order counterpart of classical theory of qualitative properties of heat semigroups associated with Dirichlet forms; we focus on the notion of \textit{eventually positive/sub-Markovian} semigroups, which seems the most appropriate one in this context, for several self-adjoint realizations and especially for the Friedrichs and Krein--von Neumann realization. An auxiliary lemma on eventual Markovian property of a semigroup might be of independent interest: it is presented in the Appendix, Section~\ref{sec:app}, together with the basic definitions and some criteria for eventual positivity of semigroups.

\section{The bi-Laplacian on discrete graphs}\label{sec:discrete}

As a warm-up, let us first focus on the properties of the discrete bi-Laplacian. 
For the convenience of the reader we first recall in a naive way some elementary graph theoretical definitions and refer the reader to~\cite[Chapter 1]{Die05} or~\cite[Appendix A]{Mug14} for more precise definitions.

A (discrete) graph $\mG$ is a couple $(\mV,\mE)$ of finite or countably infinite sets of vertices $\mV$ and edges $\mE$ connecting two vertices: if $\me\in\mE$ connects $\mv,\mw\in\mV$,  we say that $\mv,\mw$ are \textit{incident} with $\me$, or that $\mv,\mw$ are \textit{endpoints} of $\me$, and we write $\mv\stackrel{\me}{\sim}\mw$ (or $\mv\sim\mw$, if no confusion is possible).  A \textit{loop} is an edge that connects a vertex to itself, while $\mG$ has \textit{multiple edges} if there exist two vertices that are simultaneously incident with the same two (or more) edges.

A finite graph is called \textit{connected} whenever, given any two vertices $\mv,\mw\in\mV$, there is a path that connects $\mv$ and $\mw$. A \textit{path} is a connected subgraph consisting of a sequence $\me_1,\ldots,\me_n$ of pairwise distinct edges such that for all $j=2,\ldots,n$ there exists a vertex that is simultaneously an endpoint of both $\me_{j-1},\me_j$. A \textit{cycle} is a connected path in which, additionally, each vertex is an endpoint of exactly two edges. A \textit{tree} is a graph that does not contain cycles.  A graph is called \textit{complete} if every pair of distinct vertices is connected by precisely one edge.

Throughout this section we consider a finite (i.e., $V=|\mV|<\infty$ and $E=|\mE|<\infty$) connected graph $\mG$ that has neither loops nor multiple edges.

 We arbitrarily fix an orientation of $\mG$ and introduce the $V\times E$ \textit{incidence matrix} $\mathcal I$ defined by
\[
{\iota}_{\mv \me}:=\left\{
\begin{array}{ll}
-1 & \hbox{if } \mv \hbox{ is initial endpoint of } \me, \\
+1 & \hbox{if } \mv \hbox{ is terminal endpoint of } \me, \\
0 & \hbox{otherwise}.
\end{array}\right.
\]
 One can then consider the  symmetric sesquilinear form $\mathfrak h$ defined by
\begin{equation}\label{eq:formah}
{\mathfrak h}(f,g):=(\mathcal I^T f,\mathcal I^T g)_{\ell^2(\mV)}\ 
\end{equation}
where $\ell^2(\mV)$ is the $V$-dimensional Euclidean space $\R^V$: the associated operator  $\mathcal L$, defined by
\[
(\mathcal Lf,g)_{\ell^2(\mV)}\stackrel{!}{=}{\mathfrak h}(f,g),
\]
is the so-called \textit{discrete Laplacian}  of the graph $\mG$, we refer to~\cite[Chapt.~2]{Mug14} and references therein for its basic properties. Of course, $\mathcal L$ and its square $\mathcal L^2$ are nothing but matrices; however,  we prefer to formulate the following result in the language of operator theory.

\begin{prop}\label{prop:discr-wellp}
The bounded operator $\mathcal L^2$ is self-adjoint and positive semidefinite on $\ell^2(\mV)$. Accordingly, $-\mathcal L^2$ generates a cosine operator function $(\cos(-t\mathcal L^2))_{t\in \R}$ and an analytic, contractive semigroup $(e^{-t\mathcal L^2})_{t\ge 0}$ of angle $\frac{\pi}{2}$ on $\ell^2(\mV)$.
\end{prop}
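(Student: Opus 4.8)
The plan is to treat this as a purely spectral statement: reduce everything to the (elementary) fact that the discrete Laplacian $\mathcal L$ is a self-adjoint, positive semidefinite matrix, and then invoke the standard theory of semigroups and cosine operator functions generated by self-adjoint operators bounded above — exactly the chain of reasoning already used for $-\Delta_D^2,-\Delta_N^2$ in the Introduction. Phrasing it in operator-theoretic language (rather than as a bare matrix fact) is worthwhile only because it sets up the analogy with the genuinely infinite-dimensional network bi-Laplacian of the later sections.

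First I would record the properties of $\mathcal L$ itself. Since $\mathcal L$ is the operator associated with the symmetric form $\mathfrak h$ of~\eqref{eq:formah}, one has $(\mathcal Lf,f)_{\ell^2(\mV)}=\mathfrak h(f,f)=\|\mathcal I^T f\|_{\ell^2(\mE)}^2\ge 0$ for every $f\in\ell^2(\mV)$, and symmetry of $\mathfrak h$ forces $\mathcal L=\mathcal L^*$. By the spectral theorem on the finite-dimensional space $\ell^2(\mV)\cong\R^V$, $\mathcal L$ is orthogonally diagonalizable with eigenvalues $0\le\lambda_1\le\dots\le\lambda_V$. It then follows immediately that $\mathcal L^2$ is self-adjoint, with eigenvalues $\lambda_1^2,\dots,\lambda_V^2\ge 0$, hence positive semidefinite; equivalently, $(\mathcal L^2 f,f)_{\ell^2(\mV)}=\|\mathcal Lf\|_{\ell^2(\mV)}^2\ge 0$. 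This is the content of the first sentence of the Proposition.

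For the second sentence I would argue in two steps. Since $-\mathcal L^2$ is self-adjoint and bounded above by $0$, it generates an analytic semigroup of angle $\frac\pi2$: in the diagonalizing orthonormal basis $e^{-t\mathcal L^2}$ acts as multiplication by $e^{-t\lambda_j^2}$, which extends holomorphically to $t\in\C_+$ and satisfies $|e^{-t\lambda_j^2}|\le 1$ whenever $\Real t\ge 0$, so the semigroup is contractive on $\ell^2(\mV)$; this is the finite-dimensional instance of the classical fact that a self-adjoint operator bounded above generates an analytic semigroup of maximal angle (see \cite[\S~3.4]{AreBatHie01}). For the cosine operator function I would proceed as in the Introduction: $i\mathcal L$ is skew-adjoint, hence by Stone's theorem generates a unitary $C_0$-group $(e^{it\mathcal L})_{t\in\R}$; since $-\mathcal L^2=(i\mathcal L)^2$, the standard result on squares of group generators gives that $-\mathcal L^2$ generates the cosine function
\[
\cos(-t\mathcal L^2)=\tfrac12\bigl(e^{it\mathcal L}+e^{-it\mathcal L}\bigr)=\cos(t\mathcal L),\qquad t\in\R .
\]

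I do not expect any genuine obstacle: each step is either a one-line matrix computation or a citation of textbook semigroup theory. The only points worth stating with care are that the claimed contractivity is contractivity in the $\ell^2(\mV)$-norm, read off from $\max_j e^{-t\lambda_j^2}\le 1$ (contractivity in other $\ell^p$-norms is a finer question, taken up separately in the paper), and that the consistency of the semigroup and cosine function families follows automatically since both are built from the single spectral decomposition of $\mathcal L$.
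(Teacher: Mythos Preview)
Your argument is correct and is exactly the natural justification; in fact the paper does not supply a proof at all, treating the proposition as immediate from the fact that $\mathcal L$ and $\mathcal L^2$ are finite symmetric positive semidefinite matrices. Your write-up simply makes explicit the spectral reasoning (and the square-of-a-group-generator trick already used for $-\Delta_D^2,-\Delta_N^2$ in the Introduction) that the authors leave implicit.
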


Because $\mathcal I$ and hence also $\mathcal L$ and $\mathcal L^2$ have real entries, both  $(\cos(-t\mathcal L^2))_{t\in \R}$ and $(e^{-t\mathcal L^2})_{t\ge 0}$ obviously map real-valued functions to real-valued functions. 

Let us denote by 
\[
\deg(\mv):=\sum\limits_{\me\in\mE}|\iota_{\mv\me}|
\]
the degree of a vertex $\mv$: for any $\mv,\mw,\mz\in \mV$ a tedious but straightforward computation yields
\[
{\mathcal L}_{\mv\mz}{\mathcal L}_{\mz\mw}=\begin{cases}
\deg^2(\mv) & \hbox{if }\mv=\mw=\mz,\\
-\deg(\mv)&\hbox{if }\mv=\mz, \ \mz\sim \mw,\\
-\deg(\mw)&\hbox{if }\mw=\mz, \ \mz\sim \mv,\\
1 &\hbox{if }\mv\sim \mz \sim \mw,\\
0 &\hbox{if }\mv\not\sim \mz \hbox{ or } \mz\not\sim \mw;
\end{cases}
\]
in particular, $\mathcal L^2$ does not depend on the chosen orientation of $\mG$. Then a simple computation shows that the $\mv\mw$-entry of $\mathcal L^2$ is
\begin{equation}\label{eq:bilapldiscr}
{\mathcal L^2}_{\mv\mw}=\begin{cases}
\deg^2(\mv)+\deg(\mv) & \hbox{if }\mv=\mw,\\
|N_\mv\cap N_\mw|-\deg(\mv)-\deg(\mw) &\hbox{if }\mv\sim \mw,\\
|N_\mv\cap N_\mw| &\hbox{if }\mv\ne \mw \hbox{ and }\mv\not\sim \mw,
\end{cases}
\end{equation}
where $N_\mv$  is the \textit{neighborhood} of $\mv$, i.e., the set of all  vertices connected with $\mv$ by one edge; clearly, $|N_\mv|=\deg(\mv)$.

\begin{rem}\label{rem:compl}
1) The complete graph on $V$ vertices, whose bi-Laplacian is  given by~\eqref{eq:bilapldiscr} by
\[
{\mathcal L^2}_{\mv\mw}=\begin{cases}
V(V-1) & \hbox{if }\mv=\mw,\\
-V &\hbox{if }\mv\ne \mw,\\
\end{cases}
\]
generates a Markovian semigroup: indeed, this formula shows that $\mathcal L^2=V\mathcal L$, hence $e^{-z\mathcal L^2}=e^{-Vz\mathcal L}$ for all $z\in \C$.

\begin{figure}[htb]
\begin{tikzpicture}[scale=0.8]
\coordinate (g) at (0,0);
\coordinate (h) at (2,0);
\coordinate (i) at (0,2);
\coordinate (l) at (2,2);

\draw[fill] (g) circle (2pt);
\draw[fill] (h) circle (2pt);
\draw[fill] (i) circle (2pt);
\draw[fill] (l) circle (2pt);

\draw (g) -- (h) -- (l) -- (i) -- (g);
\draw (g) -- (l);
\draw (h) -- (i);
\end{tikzpicture}
\caption{The complete graph on four vertices.}\label{fig:compl}
\end{figure}
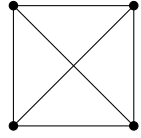

2) Interpreting the parabolic equation associated with the discrete bi-Laplacian as a  diffusion equation with higher diffusive coefficient is generically not true, though, as the following example shows: the path graph on 3 vertices, whose bi-Laplacian is
\[
\mathcal L^2:=
\begin{pmatrix}
2 & -3 & 1\\ -3 & 6 & -3 \\ 1 & -3 & 2
\end{pmatrix}
\quad\hbox{so that}\quad 
e^{-t\mathcal L^2}=
\frac{1}{6}
\begin{pmatrix}
2+3e^{-t}+e^{-9t} & 2-2e^{-9t} & 2-3e^{-t}+e^{-9t}\\
2-2e^{-9t} & 2+4e^{-9t} & 2-2e^{-9t} \\ 
2-3e^{-t}+e^{-9t} & 2-2e^{-9t} & 2+3e^{-t}+e^{-9t}
\end{pmatrix}
\]
shows that $(e^{-t\mathcal L^2})_{t\ge 0}$ needs not be either positive  or $\ell^\infty$-contractive, since e.g.\ for $t=0.1$ one finds the approximate values
\[
\begin{pmatrix}
0.8535 & 0.1978 & -0.0513\\ 0.1978 & 0.6048 & 0.1978\\ -0.0513 & 0.1978 & 0.8535
\end{pmatrix}
\]
showing that the vectors $(0,0,1)^T$ and  $(1,1,0)^T$ are not mapped into the positive cone and into the unit $\infty$-ball, respectively.

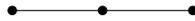
\begin{figure}[htb]
\begin{tikzpicture}[scale=0.8]
\coordinate (g) at (7.5,0);
\coordinate (h) at (9,0);
\coordinate (i) at (10.5,0);

\draw[fill] (g) circle (2pt);
\draw[fill] (h) circle (2pt);
\draw[fill] (i) circle (2pt);

\draw (g) -- (h);
\draw (h) -- (i);
\end{tikzpicture}
\caption{The path graph on three vertices.}\label{fig:path}
\end{figure}

\end{rem}

  In particular, $(e^{-t\mathcal L^2})_{t\ge 0}$ is generally not sub-Markovian; it turns out that complete graphs are actually the only ones on which $-\mathcal L^2$ generates a Markovian semigroup. More precisely, the following holds.

\begin{prop}\label{prop-complete}
The following assertions are equivalent:
\begin{enumerate}[(i)]
\item $\mG$ is complete;
\item $(e^{-t\mathcal L^2})_{t\ge 0}$ is positive;
\item $(e^{-t\mathcal L^2})_{t\ge 0}$ is $\ell^\infty$-contractive.
\end{enumerate}
\end{prop}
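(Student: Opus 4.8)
The plan is to prove the chain of implications $(i)\Rightarrow(ii)\Rightarrow(iii)\Rightarrow(i)$, exploiting that $(e^{-t\mathcal L^2})_{t\ge 0}$ has a generator whose off-diagonal structure is explicitly recorded in~\eqref{eq:bilapldiscr}. The implication $(i)\Rightarrow(ii)$ is already contained in Remark~\ref{rem:compl}(1): for a complete graph $\mathcal L^2=V\mathcal L$, so $e^{-t\mathcal L^2}=e^{-tV\mathcal L}$ is a rescaling in time of the (always positive, stochastic) heat semigroup on $\mG$, hence positive; in fact it is Markovian, since $\mathcal L\mathbf 1=0$ gives $e^{-tV\mathcal L}\mathbf 1=\mathbf 1$. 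The implication $(ii)\Rightarrow(iii)$ is the general principle for symmetric matrices: if $-B$ generates a positive semigroup and $B\mathbf 1=0$ — which holds here because $\mathcal L\mathbf 1=0$ forces $\mathcal L^2\mathbf 1=0$ — then by symmetry also $(e^{-tB})^T\mathbf 1=e^{-tB}\mathbf 1=\mathbf 1$, so $e^{-tB}$ is (sub-)stochastic, hence a contraction on $\ell^\infty(\mV)$ (a positive operator fixing $\mathbf 1$ has operator norm $1$ on $\ell^\infty$). So the whole content of the proposition is the implication $(iii)\Rightarrow(i)$, equivalently the contrapositive: if $\mG$ is \emph{not} complete, then $(e^{-t\mathcal L^2})_{t\ge 0}$ fails to be $\ell^\infty$-contractive for some (small) $t>0$.

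For the contrapositive I would argue via the short-time expansion $e^{-t\mathcal L^2}=\Id-t\mathcal L^2+O(t^2)$ and look at the row sums of the off-diagonal part. Fix a vertex $\mv$. Since $\mathcal L^2\mathbf 1=0$, the row sums of $\mathcal L^2$ vanish, so the $\ell^\infty$-norm of the $\mv$-th row of $e^{-t\mathcal L^2}$ equals, to first order in $t$, $1+t\bigl(\sum_{\mw\ne\mv}|{\mathcal L^2}_{\mv\mw}| - |{\mathcal L^2}_{\mv\mv}| + 2\cdot\#\{\text{positive off-diagonal entries}\}\bigr)$; more precisely, writing $e^{-t\mathcal L^2}=\Id-t\mathcal L^2+o(t)$, the $\mv$-th row sum of absolute values is $\sum_{\mw}|\delta_{\mv\mw}-t{\mathcal L^2}_{\mv\mw}|+o(t)$, which for small $t$ equals $1-t{\mathcal L^2}_{\mv\mv}+t\sum_{\mw\ne\mv}|{\mathcal L^2}_{\mv\mw}|+o(t) = 1+2t\sum_{\mw\ne\mv,\ {\mathcal L^2}_{\mv\mw}>0}{\mathcal L^2}_{\mv\mw}+o(t)$, again using that the row sums of $\mathcal L^2$ are zero. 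Hence $\ell^\infty$-contractivity for all small $t$ forces, for every vertex $\mv$, that \emph{no} off-diagonal entry ${\mathcal L^2}_{\mv\mw}$ is strictly positive. By~\eqref{eq:bilapldiscr} a positive off-diagonal entry arises exactly when $\mv\not\sim\mw$ (and $\mv\ne\mw$) with $|N_\mv\cap N_\mw|>0$, i.e.\ when two non-adjacent vertices have a common neighbour. So the condition "$e^{-t\mathcal L^2}$ is $\ell^\infty$-contractive for small $t$" forces: any two vertices with a common neighbour are themselves adjacent. A connected graph with this property is complete — indeed connectedness plus this "no induced path of length $2$" property propagates adjacency along any path, so all vertices are pairwise adjacent. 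This closes the loop.

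The main obstacle, and the point requiring a little care, is the first-order asymptotic analysis of $\sum_{\mw}|\delta_{\mv\mw}-t{\mathcal L^2}_{\mv\mw}|$: one must check that for $t$ small enough the diagonal term $1-t{\mathcal L^2}_{\mv\mv}$ stays nonnegative (it does, since ${\mathcal L^2}_{\mv\mv}=\deg^2(\mv)+\deg(\mv)>0$ is just a fixed positive number, so this holds for $t<1/{\mathcal L^2}_{\mv\mv}$) and that the $o(t)$ error from the higher-order terms of the exponential does not swamp the linear term when there genuinely is a positive off-diagonal entry — which is immediate because the coefficient $2\sum_{{\mathcal L^2}_{\mv\mw}>0}{\mathcal L^2}_{\mv\mw}$ of $t$ is then a strictly positive constant independent of $t$. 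It then only remains to observe the purely combinatorial fact that a connected graph in which every two vertices at distance $2$ are in fact adjacent must be complete, which one proves by induction on distance: the hypothesis immediately collapses distance $2$ to distance $1$, and then any vertex at distance $k\ge 2$ from $\mv$ would, via a shortest path, produce a vertex at distance exactly $2$ that is not adjacent, a contradiction, so the diameter is $1$. I would also remark that this simultaneously re-proves $(ii)\Rightarrow(i)$ directly (a positive semigroup with generator satisfying $B\mathbf 1=0$ is automatically $\ell^\infty$-contractive as noted above, so $(ii)\Rightarrow(iii)$ is free), keeping the argument streamlined.
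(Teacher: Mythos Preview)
Your proof is correct. The core mechanism is the same as the paper's: both arguments hinge on the fact that a positive off-diagonal entry of $\mathcal L^2$ exists precisely when two non-adjacent vertices share a common neighbour, and both deduce failure of $\ell^\infty$-contractivity from the row-sum criterion $-{\mathcal L^2}_{\mv\mv}+\sum_{\mz\ne\mv}|{\mathcal L^2}_{\mv\mz}|>0$. The organization differs: the paper treats $(ii)$ and $(iii)$ separately by showing ``not complete $\Rightarrow$ not positive'' and ``not complete $\Rightarrow$ not $\ell^\infty$-contractive'', quoting \cite[Lemma~6.1]{Mug07} for the row-sum criterion and then carrying out an explicit estimate at a vertex of maximal degree; you instead run the cycle $(i)\Rightarrow(ii)\Rightarrow(iii)\Rightarrow(i)$, getting $(ii)\Rightarrow(iii)$ for free from the general fact that a positive semigroup fixing $\mathbf 1$ is stochastic. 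Your treatment of $(iii)\Rightarrow(i)$ is also a bit slicker: rather than estimating the row sum directly, you use $\mathcal L^2\mathbf 1=0$ to rewrite $-{\mathcal L^2}_{\mv\mv}+\sum_{\mz\ne\mv}|{\mathcal L^2}_{\mv\mz}|$ as $2\sum_{{\mathcal L^2}_{\mv\mw}>0}{\mathcal L^2}_{\mv\mw}$, reducing the question to the purely combinatorial statement that a connected graph with no induced path of length~2 is complete. Both routes are short; yours avoids the choice of a distinguished vertex and the appeal to an external lemma, while the paper's yields the explicit lower bound $\ge 1$ for the row-sum defect.
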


Observe that if $\mG$ is complete (and hence connected), then $(e^{-t\mathcal L^2})_{t\ge 0}=(e^{-Vt\mathcal L})_{t\ge 0}$ is also irreducible, since so is $(e^{-t\mathcal L})_{t\ge 0}$ on any connected graph.

\begin{proof}
We have just seen in Remark~\ref{rem:compl} that $(e^{-t\mathcal L^2})_{t\ge 0}$ is Markovian if $\mG$ is complete.

Conversely, let $\mG$ be non-complete, and in particular assume $\mG$ to have at least 3 vertices: we are going to show that the semigroup is neither positive nor $\ell^\infty$-contractive. Take two vertices $\mv,\mw\in \mV$ chosen in such a way that $\mv$ has maximal degree (necessarily $2\le \deg(\mv)\le V-2$), that $\mw$ is not adjacent to $\mv$, and that $\mv,\mw$ have (at least) one common neighbor.

Then, we first observe that by~\eqref{eq:bilapldiscr} the off-diagonal entry ${\mathcal L^2}_{\mv\mw}$ is strictly positive, which prevents $(e^{-t\mathcal L^2})_{t\ge 0}$ from being a positive semigroup.

Furthermore, let us show that the condition
\[-{\mathcal L^2}_{\mv\mv}+\sum_{\mz\ne \mv}|{\mathcal L^2}_{\mv\mz}|\le 0
\]
fails to hold for this choice of $\mv$: by~\cite[Lemma~6.1]{Mug07} this will imply that  $(e^{-t\mathcal L^2})_{t\ge 0}$ is not $\ell^\infty$-contractive. Indeed,
\[
\begin{split}
-{\mathcal L^2}_{\mv\mv}+\sum_{\mz\ne \mv}|{\mathcal L^2}_{\mv\mz}|&= -{\mathcal L^2}_{\mv\mv}+\sum_{\mz\in N_\mv}|{\mathcal L^2}_{\mv\mz}|+\sum_{\mz\nsim\mv}|{\mathcal L^2}_{\mv\mz}|\\
&=-\deg^2(\mv)-\deg(\mv)+\sum_{\mz\in N_\mv}(\deg(\mv)+\deg(\mz)-|N_{\mv}\cap N_{\mz}|) +\sum_{\mz\nsim\mv} |N_{\mv}\cap N_{\mz}|\\
&=-\deg(\mv)+\sum_{\mz\in N_\mv}(\deg(\mz)-|N_{\mv}\cap N_{\mz}|) +\sum_{\mz\nsim\mv} |N_{\mv}\cap N_{\mz}|\\
&\ge
-\deg(\mv)+\sum_{\mz\in N_\mv}(|N_\mz|-|N_{\mv}\cap N_{\mz}|) + 1\\
&= \sum_{\mz\in N_\mv}(|N_{\mz}\setminus N_{\mv}|-1) + 1\ge 1
\end{split}
\]
where the last inequality holds because $\mv\in N_\mz\setminus N_\mv$ for all $\mz$ adjacent to $\mv$.
(We recall that $N_{\tilde{\mv}}$ denotes the neighborhood of a vertex $\tilde{\mv}$.)
\end{proof}

However, weaker contractivity properties do hold for  any graph, whether complete or not.

\begin{prop}\label{prop:ellp-discr}
The semigroup $(e^{-t\mathcal L^2})_{t\ge 0}$ is $\ell^p$-contractive for \textit{some} $p\in (2,\infty)$.
\end{prop}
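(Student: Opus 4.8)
The plan is to interpolate between the $\ell^2$-contractivity established in Proposition~\ref{prop:discr-wellp} and a quantitative $\ell^\infty$-bound that, although not contractive, grows sufficiently slowly. Concretely, I would first fix $t>0$ and estimate $\Vert e^{-t\mathcal L^2}\Vert_{\infty\to\infty}$ and $\Vert e^{-t\mathcal L^2}\Vert_{1\to 1}$ (these two are equal by symmetry of the matrix $\mathcal L^2$, since the $\infty\to\infty$ operator norm of a matrix is the maximum absolute row sum and the $1\to 1$ norm is the maximum absolute column sum). From the explicit entries in~\eqref{eq:bilapldiscr} one sees that $\mathcal L^2$ has nonnegative diagonal and off-diagonal entries of both signs, so it is \emph{not} of the form ``diagonal minus a nonnegative matrix''; nevertheless, writing $\mathcal L^2 = c\,\Id - B$ with $c:=\max_\mv {\mathcal L^2}_{\mv\mv}$ and $B:= c\,\Id-\mathcal L^2$, one has $e^{-t\mathcal L^2}=e^{-ct}e^{tB}$, and the entrywise estimate $\vert (e^{tB})_{\mv\mw}\vert \le (e^{t\vert B\vert})_{\mv\mw}$ (where $\vert B\vert$ is the entrywise absolute value) gives
\[
\Vert e^{-t\mathcal L^2}\Vert_{\infty\to\infty}\le e^{-ct}\,\Vert e^{t\vert B\vert}\Vert_{\infty\to\infty}=e^{-ct}e^{t\Vert \vert B\vert\Vert_{\infty\to\infty}}=:M(t).
\]
The point is that $M(t)$ is finite for every $t$ and, more importantly, $M(t)\le e^{\omega t}$ for a fixed constant $\omega$ depending only on the graph; one may even take $\omega=0$ along a sequence if one prefers, but a finite exponential growth rate is all that is needed.

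Next I would apply the Riesz--Thorin interpolation theorem. Since $e^{-t\mathcal L^2}$ is a contraction on $\ell^2$ and has norm at most $M(t)$ on $\ell^\infty$, for $\theta\in(0,1)$ and $\tfrac1p=\tfrac{1-\theta}{2}+\tfrac{\theta}{\infty}=\tfrac{1-\theta}{2}$, i.e.\ $p=\tfrac{2}{1-\theta}\in(2,\infty)$, we obtain
\[
\Vert e^{-t\mathcal L^2}\Vert_{p\to p}\le \Vert e^{-t\mathcal L^2}\Vert_{2\to 2}^{1-\theta}\,\Vert e^{-t\mathcal L^2}\Vert_{\infty\to\infty}^{\theta}\le 1\cdot M(t)^{\theta}=M(t)^\theta.
\]
By symmetry of $\mathcal L^2$, the same bound holds on $\ell^{p'}$ with $p'=\tfrac{2}{1+\theta}\in(1,2)$; since we only want \emph{some} $p\in(2,\infty)$, the range $p>2$ already suffices. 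It remains to arrange genuine contractivity, $M(t)^\theta\le 1$, which is \emph{not} automatic since $M(t)$ generally exceeds $1$. The fix is to use $\ell^2$-contractivity more carefully: in fact $\Vert e^{-t\mathcal L^2}\Vert_{2\to 2}=e^{-t\lambda_1}$ where $\lambda_1\ge 0$ is the smallest eigenvalue of $\mathcal L^2$; on a connected graph $\mathcal L$ has a one-dimensional kernel (the constants), so $\lambda_1=0$ and this gives no decay. Hence instead I would interpolate after subtracting the projection onto constants, or — cleaner — argue that for the complementary exponent I can absorb the $\ell^\infty$ growth: choose $\theta$ small enough that $M(t)^\theta<1+\varepsilon$ fails to be the right tactic; rather, one should note $\ell^p$-contractivity for \emph{all} $t$ simultaneously is being asked, so take $\theta\to 0$: for any fixed $t$, $M(t)^\theta\to 1$, but we need a uniform $p$.

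The main obstacle, therefore, is exactly this uniformity: we need a single $p\in(2,\infty)$ for which $\Vert e^{-t\mathcal L^2}\Vert_{p\to p}\le 1$ for \emph{all} $t\ge 0$. The clean way around it is a smoothness/generator argument rather than raw interpolation. I would instead proceed via the theory of contraction semigroups: $-\mathcal L^2$ generates a contraction semigroup on $\ell^p(\mV)$ if and only if the numerical range of $-\mathcal L^2$ acting on $\ell^p$ lies in the left half-plane, equivalently (by a classical criterion, cf.\ \cite[Lemma~6.1]{Mug07} and its $\ell^p$-analogue) a suitable dissipativity inequality holds for the duality map $J_p(f)=\vert f\vert^{p-1}\sign f$. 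Since $-\mathcal L^2=-(\mathcal L)^2$ with $\mathcal L$ symmetric and $\ell^p$-dissipative for every $p$ (the discrete Laplacian generates a sub-Markovian, hence $\ell^p$-contractive, semigroup on every finite graph), one checks that $\langle \mathcal L^2 f, J_p(f)\rangle = \langle \mathcal L f, \mathcal L^* J_p(f)\rangle$ and estimates this using the explicit structure; the key is that at $p=2$ this quantity is $\Vert \mathcal L f\Vert_2^2\ge 0$ with a definite gap, and dissipativity is an \emph{open} condition in $p$ near $2$ because the relevant bilinear form depends continuously on $p$ in the finite-dimensional setting. Thus there exists $\delta>0$ such that $-\mathcal L^2$ is $\ell^p$-dissipative, hence generates a contraction semigroup on $\ell^p(\mV)$, for all $p\in(2,2+\delta)$, which is the claim. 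The delicate point to get right is the continuity-in-$p$ estimate for the dissipativity functional; since everything lives in finite dimensions and $J_p$ depends continuously (indeed smoothly, away from zeros) on $p$, this is routine but must be stated carefully to cover vectors with vanishing entries, where one passes to a limit or uses the closedness of the dissipativity inequality.
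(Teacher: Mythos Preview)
Your proposal correctly abandons the interpolation attempt and lands on the same framework as the paper: prove $\ell^p$-dissipativity of $-\mathcal L^2$ via the Lumer--Phillips criterion, i.e.\ show
\[
(\mathcal L f,\mathcal L|f|^{p-2}f)_{\ell^2(\mV)}\ge 0\qquad\hbox{for all }f\in\R^V
\]
for some $p>2$. The serious gap lies in your justification that this is ``an open condition in $p$ near $2$.'' You write that at $p=2$ the quantity equals $\|\mathcal L f\|_2^2\ge 0$ ``with a definite gap,'' and then appeal to continuity and finite-dimensionality. But there is no definite gap: the graph is connected, so $\ker\mathcal L=\langle\mathbf 1\rangle$, and $\|\mathcal L f\|_2^2=0$ precisely on the constants. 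By homogeneity one may restrict to the unit sphere, but even there the functional at $p=2$ vanishes at $\pm\mathbf 1/\sqrt{V}$, so compactness plus continuity does \emph{not} give positivity for nearby $p$. A contradiction argument shows that any putative violating sequence $(f_n)$ with $\alpha_n\to 0$ must accumulate at a constant; to close the argument you then need a second-order analysis near the constants, and that is exactly the substance of the proof.

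The paper carries this out explicitly: it normalizes so that $\|\mathcal I^T f\|_{\ell^2(\mE)}=1$ (which forces the oscillation of $f$ to be bounded and removes the constants from consideration), obtains a uniform lower bound $\|\mathcal L f\|_2^2\ge c>0$, and then splits according to whether the mean value $F$ of $f$ lies in a bounded interval $[-A,A]$ or not. In the bounded case, uniform convergence $y|y|^\alpha\to y$ on compacta handles everything; in the unbounded case, a Taylor expansion of $|F+g|^\alpha(F+g)$ shows $(\mathcal L f,\mathcal L|f|^\alpha f)\approx(1+\alpha)F^\alpha\|\mathcal L f\|_2^2>0$. This two-case analysis is the missing ingredient in your sketch. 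Your remark that the delicate point concerns ``vectors with vanishing entries'' is a red herring: the map $(\alpha,x)\mapsto |x|^\alpha x$ is perfectly continuous at $x=0$; the genuine obstruction is the loss of uniformity as $f$ drifts toward a constant of large magnitude.
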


The following smart proof has been suggested to us in~\cite{Fed17}.

\begin{proof}
By Lumer--Phillips' Theorem, all we have to show is that $-\mathcal L^2$ is dissipative whenever regarded as an operator on $\ell^p(\mV)$: because the duality mapping between $\ell^p$ and its dual space $\ell^{p'}$ is $f\mapsto |f|^{p-2}f$, this amounts to saying that for some $\alpha>0$
\begin{equation}\label{eq:fedja}
(\mathcal L f,\mathcal L |f|^\alpha f)_{\ell^2(\mV)}\ge 0\qquad \hbox{for all }f\in \R^V
\end{equation}
where we have set $\alpha:=p-2$. We are going to show that this holds for $\alpha>0$ small enough. To this aim, we are going to show that 1) $\|\mathcal L f\|^2_{\ell^2(\mV)}$  is bounded from below away from $0$ for $\Vert f\Vert_{\ell^2(\mV)}=1$ and 2) $(\mathcal L f,\mathcal L |f|^\alpha f)_{\ell^2(\mV)}\gtrapprox \|\mathcal L f\|^2_{\ell^2(\mV)}$.
 
 Step 1) Assume without loss of generality that $\|\mathcal I^T f\|^2_{\ell^2(\mE)}=1$.  Because the incidence matrix satisfies $\mathcal I^T F=0$ for all constant functions $F\in \R^V$, we deduce that
\begin{equation*}\label{eq:fedmain}
(f-F,\mathcal Lf)_{\ell^2(\mV)}=
\sum\limits_{\me\in \mE}|\mathcal I^T f(\me)|^2=1\qquad \hbox{for any }F\in\mathbb R
\end{equation*}
and by Cauchy--Schwarz
\[
 \|\mathcal L f\|^2_{\ell^2(\mV)}\ge \|f-F\|_{\ell^2(\mV)}^{-2}.
\]
Furthermore, again because $\|\mathcal I^T f\|^2_{\ell^2(\mE)}=1$ the oscillation of $f$ is bounded by some $C>0$, say 
\[
|f(\mv)-F|\le C \qquad\hbox{ for all }\mv,
\]
where $F$ is the mean value of $f$, hence 
\[
\|\mathcal L f\|^2_{\ell^2(\mV)}\ge c
\]
for $c:=(C^2 V)^{-1}$.

Step 2) Let us now fix some $A>0$ and $\alpha\in (0,1)$ and consider the two cases 
\begin{itemize}
\item $F\in [-A,A]$,
\item $F\notin [-A,A]$.
\end{itemize}
In the former case we have $y|y|^\alpha\to y$ uniformly on $[-A-C,A+C]$ as $\alpha\to 0^+$ and $(\mathcal L f,\mathcal L |f|^\alpha f)_{\ell^2(\mV)}$ tends to $\|\mathcal L f\|^2_{\ell^2(\mV)}$ uniformly for $f$ with values in $[-A-C,A+C]$. 

In the latter case, writing $f:=F+g$ for $|g|\le C\ll F$ (assuming that $F\gg 0$; the case of $F\ll 0$ can be treated likewise) we obtain
\[
\begin{split}
|f(\mv)|^\alpha f(\mv)-|f(\mw)|^\alpha f(\mw)&=F^{1+\alpha}\left(1+\frac{g(\mv)}{F}\right)^{1+\alpha}-F^{1+\alpha}\left(1+\frac{g(\mw)}{F}\right)^{1+\alpha}\\
&=F^{1+\alpha}\left(\frac{(1+\alpha)(g(\mv)-g(\mw))}{F}\right)+O\left(\frac{|g(\mv)-g(\mw)|(|g(\mv)|+|g(\mw)|)}{F^{1+\alpha}}\right)\\
&=(1+\alpha)F^\alpha(g(\mv)-g(\mw))+O\left(\frac{|g(\mv)-g(\mw)|(|g(\mv)|+|g(\mw)|)}{F^{1+\alpha}}\right).
\end{split}
\]
This means that for any $\alpha \in (0,1)$ the term $F^{-\alpha}(\mathcal L|f|^\alpha f)(\mv)$ converges uniformly to $(1+\alpha)\mathcal Lf(\mv)$ as $F\to\infty$, so 
\[
(\mathcal L f,\mathcal L |f|^\alpha f)_{\ell^2(\mV)}\approx (1+\alpha)F^\alpha \|\mathcal Lf\|^2_{\ell^2(\mV)} \qquad \hbox{for large }F.
\]
Thus there exists $A$ such that for all $|F|>A$ ~\eqref{eq:fedja} holds.
\end{proof}

\begin{rem}
It follows from the Riesz--Thorin Theorem that if $\mG$ is not complete, then those $p$ such that $(e^{-t\mathcal L^2})_{t\ge 0}$ is $\ell^p$-contractive form a connected subset of $(1,\infty)$. Let us observe that for the star graph on two edges (i.e., for the path graph on 3 vertices, cf.\ Remark~\ref{rem:compl}), the transition takes place at some $p_0\le 6$: indeed, for 
\[
f=\begin{pmatrix}
1\\ \frac{7}{4}\\ -1
\end{pmatrix}
\]
one finds for
\begin{equation*}\label{eq:globest}
\kappa_f(p):=(\mathcal L f,\mathcal L |f|^{p-2}f)_{\ell^2(\mV)}
\end{equation*}
the approximate values $\kappa_f(2)=\frac{49}{8}$, $\kappa_f(5)\approx 2.31$, $\kappa_f(5.71)\approx 0.02$, $\kappa_f(5.72)\approx -0.02$. 
More generally, it can be seen that for stars on $E$ edges the lowest $p_0$ for which $\kappa_f(p_0)<0$ for some $f$ satisfies $p_0\to 2$ as $E\to \infty$. We expect the internal structure of the graph to decisively determine such a $p_0$.
\end{rem}

Since the constant function ${\bf 1}$ lies in the null space of $\mathcal L$, the lowest eigenvalue of the discrete bi-Laplacian is always $0$ -- its multiplicity being the number of connected components of $\mG$, i.e., 1 under our standing assumptions. By the Spectral Mapping Theorem, the eigenvalues of the square $A^2$ of a symmetric matrix $A$ are precisely the square of the eigenvalues of $A$: we can thus deduce the following estimate on the spectral gap of $\mathcal L^2$ from a well-known result in~\cite{Fie73}.

\begin{prop}\label{prop:eigenv-discr}
The second lowest eigenvalue $\lambda_2$ of the $\mathcal L^2$ on $\mG$ cannot be smaller than the second lowest eigenvalue $4\left(1-\cos\left(\frac{\pi}{V}\right)\right)^2$ of $\mathcal L^2$ on a path graph on $V$ vertices; and it cannot be larger than the second lowest eigenvalue $V^2$ of $\mathcal L^2$ on the complete graph on $V$ vertices. 

Accordingly, $(e^{-t\mathcal L^2})_{t\ge 0}$ converges uniformly to the orthogonal projector onto the space spanned by $\bf 1$: the rate of convergence is $e^{-\lambda_2 t}$, hence it is fastest for the complete graph on $V$ vertices and slowest for the path graph on $V$ vertices.
\end{prop}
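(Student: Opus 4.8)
The plan is to deduce everything from the classical spectral theory of the discrete Laplacian, exactly along the lines hinted at just before the statement. Order the eigenvalues of $\mathcal L^2$ as $0=\lambda_1<\lambda_2\le\cdots\le\lambda_V$; here $0$ is simple because $\mG$ is connected, so $\lambda_2$ is a genuine spectral gap and its eigenspace is orthogonal to $\mathrm{span}\,\mathbf 1$. Since $\mathcal L$ is real symmetric, the Spectral Mapping Theorem gives $\lambda_2=a(\mG)^2$, where $a(\mG)$ --- the algebraic connectivity of $\mG$ --- is the second smallest eigenvalue of $\mathcal L$. I would then quote Fiedler~\cite{Fie73}: over all connected graphs on $V$ vertices, the path $P_V$ minimises $a(\cdot)$ with $a(P_V)=2(1-\cos(\pi/V))$, and the complete graph $K_V$ maximises it with $a(K_V)=V$. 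Squaring the chain $a(P_V)\le a(\mG)\le a(K_V)$ delivers the first assertion.

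For self-containedness I might include the (easy) upper estimate directly: by Courant--Fischer, $a(\mG)=\min\{\mathfrak h(f,f):f\perp\mathbf 1,\ \|f\|_{\ell^2(\mV)}=1\}$, and the quadratic form $\mathfrak h(f,f)=\sum_{\me}|f(\mv)-f(\mw)|^2$ only grows when edges are added; since $\mG$ is a subgraph of $K_V$ and $\mathcal L(K_V)=V\,\Id-J$ (with $J$ the all-ones matrix) has eigenvalue $V$ with multiplicity $V-1$, we get $a(\mG)\le V$. For the path graph I would simply recall that the eigenvalues of $\mathcal L(P_V)$ are $2-2\cos(k\pi/V)$, $k=0,\dots,V-1$, so $a(P_V)=2-2\cos(\pi/V)$.

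The convergence statement is then immediate from the spectral decomposition of the self-adjoint semigroup. Let $P$ be the orthogonal projector onto $\mathrm{span}\,\mathbf 1$ (equivalently, onto the $0$-eigenspace) and $P_k$ the spectral projector onto the $\lambda_k$-eigenspace; then $e^{-t\mathcal L^2}-P=\sum_{k\ge 2}e^{-t\lambda_k}P_k$, so that $\|e^{-t\mathcal L^2}-P\|_{\ell^2(\mV)\to\ell^2(\mV)}=e^{-t\lambda_2}$. Feeding in the two-sided bound on $\lambda_2$ and the identification of the extremal graphs from the first part shows that this exponential rate is largest, $V^2$, precisely when $\mG=K_V$, and smallest, $4(1-\cos(\pi/V))^2$, precisely when $\mG=P_V$.

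As for difficulties: I do not expect a real obstacle. Everything rests on standard facts --- simplicity of the zero eigenvalue for connected graphs, the spectral-mapping identity $\sigma(\mathcal L^2)=\sigma(\mathcal L)^2$, the spectral decomposition of $e^{-t\mathcal L^2}$ --- together with Fiedler's theorem. The only points to handle with some care are citing~\cite{Fie73} correctly for the \emph{lower} bound (that the minimiser of the algebraic connectivity among \emph{all} connected graphs on $V$ vertices is the path, not merely among trees) and making sure the two extremal values $2(1-\cos(\pi/V))$ and $V$ are quoted without slips.
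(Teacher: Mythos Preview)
Your proposal is correct and follows exactly the approach the paper indicates: apply the Spectral Mapping Theorem to reduce to the algebraic connectivity of $\mathcal L$, then invoke Fiedler's bounds from~\cite{Fie73}, and read off the convergence rate from the spectral decomposition of the self-adjoint semigroup. The paper's own argument is in fact just the one-sentence remark preceding the proposition; you have simply supplied the details (Courant--Fischer for the upper bound, the explicit path eigenvalues, and the norm identity $\|e^{-t\mathcal L^2}-P\|=e^{-\lambda_2 t}$) that the paper leaves implicit.
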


\begin{rem}\label{rem:moh}
Let $\mG$ be an \emph{infinite} graph: if $\deg\in \ell^\infty(\mV)$, then $\mG$ is said to be \emph	{uniformly locally finite}. One can then consider the quadratic form $\mathfrak h$ defined as in~\eqref{eq:formah}, this time with maximal domain. If $\mG$ is uniformly locally finite, then 
%
 by~\cite[Thm.~3.2]{Moh82} the operator $\mathcal L$ associated with $\mathfrak h$ is bounded, self-adjoint, and positive semidefinite  on the Hilbert space $\ell^2(\mV)$, hence $-\mathcal L^2$ generates a cosine operator function $(\cos(-t\mathcal L^2))_{t\in \R}$ and an analytic, contractive semigroup $(e^{-t\mathcal L^2})_{t\ge 0}$ of angle $\frac{\pi}{2}$ on $\ell^2(\mV)$.

By~\cite[Exa.~3.14.15]{AreBatHie01} the cosine operator function can be expressed as
\begin{equation*}\label{eq:fattorini}
\cos(-t\mathcal L^2)=\frac{1}{2}\left(e^{it \mathcal L }+e^{-it\mathcal L}\right),\quad t\ge 0,
\end{equation*}
which in turn yields quite explicit analytic expressions in those cases for which $\mathcal L$ is known explicitly. By~\cite[Exa.~12.3.3]{Dav07} we obtain for instance for $f\in \ell^2(\mV)$ the expression
\begin{equation*}\label{eq:estliz}
e^{-it\mathcal L}f(\mv)=\sum_{\mw\in \Z}f(\mw) \frac{1}{2\pi}\int_{-\pi}^\pi \cos((\mv-\mw)q) e^{-2it(1-\cos q)}dq,\quad t\in\R,\ \mv\in\Z,
\end{equation*}
for the unitary group generated by the discrete Laplacian on the graph (identified with) $\Z$, hence
\begin{equation*}\label{eq:bi-estliz}
\begin{split}
\cos(-t\mathcal L^2)f(\mv)&=\sum_{\mw\in \Z}f(\mw) \frac{1}{2\pi}\int_{-\pi}^\pi \cos((\mv-\mw)q) \cos(2t(1-\cos q))dq\\
&=\sum_{k\in\Z_p}f(\mv-k)i^kJ_{|k|}(2t)\cos(2t)+\sum_{k\in\Z_d}f(\mv-k)i^{k+1}J_{|k|}(2t)\sin(2t),
\end{split}
\quad t\in\R,\ \mv\in\Z,
\end{equation*}
where $J_k$ denotes the Bessel function of the first kind.
\end{rem}

\begin{rem}
While it is not clear how to define the discrete Laplacian if a graph contains loops, the theory of discrete Laplacians is not too strongly dependent on the assumption that the graph contains no multiple edges. The definition of the discrete Laplacian as $\mathcal L:=\mathcal I\mathcal I^T$ does still make sense, and we can still consider the self-adjoint, positive semi-definite operator $\mathcal L^2$; accordingly, if $\mG$ contains multiple edges Propositions~\ref{prop:discr-wellp} still holds, and one can check that also the proof of Proposition~\ref{prop:ellp-discr} goes through. However, the simple example of the path graph on three vertices show that~\eqref{eq:bilapldiscr} fails to hold in this case, and therefore the proof of Proposition~\ref{prop-complete} is affected, too.
\end{rem}

\section{Bi-Laplacians on networks: General setting and Self-adjoint extensions}\label{sec:gener}

Let us now turn to the main topic of this paper, that is, bi-Laplacians on \textit{networks}/\textit{metric graphs}: we come back to the setting of Section~\ref{sec:discrete} and consider a finite connected graph $\mG=(\mV,\mE)$ without loops or multiple edges, with $V:=|\mV|$ and $E:=|\mE|$. 
We also denote by $\mE_{\mv}$ the set of all edges $\mv$ is incident with. Clearly $|\mE_{\mv}|=\deg(\mv)$ for all $\mv\in \mV$. We fix an arbitrary orientation of $\mG$, so that each edge $\me\equiv(\mv,\mw)$ can be identified with an interval $[0,\ell_\me]$ and its endpoints $\mv,\mw$ with $0$ and $\ell_\me$, respectively. In such a way one naturally turns the $\mG$ into a metric measure space $\mathcal G$: a metric graph whose underlying discrete graph is precisely $\mG$.

We regard functions on $\mathcal G$ as vectors $(u_\me)_{\me\in\mE}$, where each $u_\me$ is defined on the edge $\me\simeq(0,\ell_\me)$. As usual in the literature, we introduce the Hilbert space of measurable, square integrable functions on $\mathcal G$
\[L^2(\mathcal G):=\bigoplus_{\me\in \mE} L^2(0,\ell_\me)=\left\{u=(u_\me)_{\me\in\mE} \textrm{ s.t. }\, u_\me:(0,\ell_\me)\to\C\ \textrm{is measurable and }  \sum\limits_{\me\in\mE} \int_0^{\ell_\me}|u_\me(x)|^2\,dx<\infty\right\}, \]
endowed with the natural inner product \[(u,v)_{L^2(\mathcal G)}:=\sum\limits_{\me\in\mE}\int_0^{\ell_\me} u_\me(x)\overline{v_\me(x)}\,dx.\]
Our approach to the study of bi-Laplacians is based on sesquilinear form methods. To this aim, we introduce the Sobolev space
\[\h^k(\mathcal G):=\bigoplus_{\me\in\mE} H^k(0,\ell_\me),\qquad k\in\mathbb N,\]
consisting of functions supported on the edges whose first $k$-th weak derivatives are square integrable. Let us stress that the connectivity of $\mathcal G$ does not affect the definition of such Sobolev spaces. Even though boundary values are well-defined for elements of $\h^k(\mathcal G)$, only later on will we describe them in terms of transmission conditions in the vertices of $\mathcal G$.

We are going to study the bi-Laplacian
$$A:u\mapsto u''''$$ acting on each edge of  $\mathcal G$.
In order to completely define $A$ on $L^2(\mathcal{G})$ one has to specify its domain: we can think of its realizations $A_0$ and $A_{\max}$ with minimal and maximal domains
\[
D(A_0):=\bigoplus_{\me\in\mE} H^4_0(0,\ell_\me),\quad D(A_{\max}):=\tilde{H^4}(\mathcal G)=\bigoplus_{\me\in\mE} H^4(0,\ell_\me),
\]
respectively. 
Both $A_0$ and $A_{\max}$ are closed and semi-bounded, but they do not generate strongly continuous semigroups on $L^2(\mathcal G)$; in between there are infinitely many other realizations, whose generation property will depend on conditions on the boundary values of functions in $\tilde{H}^4(\mathcal G)$ and their derivatives of order one, two, three: these boundary values are well-defined since $\tilde{H}^4(\mathcal G)\hookrightarrow \bigoplus_{\me\in\mE}C^3([0,\ell_\me])$. 

In the search for realizations of $A$ that are semigroup generators, let us first search for those extensions of $A_0$ that are self-adjoint on $L^2(\mathcal G)$: they certainly exist, since $A_0$ is a symmetric, positive semidefinite operator and we can therefore apply classic extension theory, see e.g.\ the overview in~\cite{FukOshTak10,Sch12}.  Let us describe them systematically:
integrating by parts we find for all $u,v\in \h^4(\mathcal G)$
\begin{equation*}
\begin{split}
(A u,v)_{L^2(\mathcal G)}&=\sum\limits_{\me\in\mE}\int_0^{\ell_\me} u_\me^{''''}(x)\overline{v_\me(x)}\,dx\\
&=\sum\limits_{\me\in\mE}\left[u_\me^{'''}\overline{v_\me}\right]_0^{\ell_\me}-\sum\limits_{\me\in\mE}\left[u_\me^{''}\overline{v'_\me}\right]_0^{\ell_\me}+\sum\limits_{\me\in\mE}\left[u'_\me \overline{v^{''}_\me}\right]_0^{\ell_\me}\\
&\quad -\sum\limits_{\me\in\mE}\left[u_\me \overline{v^{'''}_{\me}}\right]_0^{\ell_\me}+\sum\limits_{\me\in\mE}\int_0^{\ell_\me} u_\me(x)\overline{v^{''''}_\me(x)}\,dx\ .
\end{split}
\end{equation*}
Adopting the notations $u(0):=(u_\me(0))_{\me\in \mE}$ and  $u(\ell):=(u_\me(\ell_\me))_{\me\in\mE}$,  we find that $(A u,v)_{L^2(\mathcal G)}=(u,Av)_{L^2(\mathcal G)}$ -- and hence $A$ is symmetric -- if and only if 
\begin{equation}\label{selfadj-del2}
\left(\begin{pmatrix} u(0)\\u(\ell)\\-u'(0)\\u'(\ell)\end{pmatrix},\begin{pmatrix} -v'''(0)\\v'''(\ell)\\-v''(0)\\-v''(\ell)\end{pmatrix}\right)_{\C^{4E}}=\left(\begin{pmatrix} -u'''(0)\\u'''(\ell)\\-u''(0)\\-u''(\ell)\end{pmatrix},\begin{pmatrix} v(0)\\v(\ell)\\-v'(0)\\v'(\ell)\end{pmatrix}\right)_{\C^{4E}}\hbox{for all }u,v\in D(A):
\end{equation}

If $Y$ is a 
subspace of $\C^{4E}$, then a sufficient condition for \eqref{selfadj-del2} to hold is that
\begin{equation*}
\begin{pmatrix} u(0)\\u(\ell)\\-u'(0)\\u'(\ell)\end{pmatrix},\begin{pmatrix} v(0)\\v(\ell)\\-v'(0)\\v'(\ell)\end{pmatrix}\in Y\quad \textrm{and}\quad \begin{pmatrix} -u'''(0)\\u'''(\ell)\\-u''(0)\\-u''(\ell)\end{pmatrix},\begin{pmatrix} -v'''(0)\\v'''(\ell)\\-v''(0)\\-v''(\ell)\end{pmatrix}\in Y^\perp\ .
\end{equation*}
This boundary condition can be further generalized considering some  $R\in\mathcal{L}(Y)$ and imposing
\begin{equation}\label{selfadj-del3}
 \begin{pmatrix} u(0)\\u(\ell)\\-u'(0)\\u'(\ell)\end{pmatrix}\in Y,\qquad \begin{pmatrix} -u'''(0)\\u'''(\ell)\\-u''(0)\\-u''(\ell)\end{pmatrix}+R\begin{pmatrix} u(0)\\u(\ell)\\-u'(0)\\u'(\ell)\end{pmatrix}\in Y^\perp
 \end{equation}
on all $u$ in the domain of $A$.

This kind of parametrization of boundary conditions for one-dimensional operators is not completely new in literature: in the case of Laplacians on networks, comparable conditions are discussed for example in~\cite{Kuc04}, but they go back at least to~\cite{Hol39} in the context of Sturm--Liouville problems. 
In particular, in \cite{Kuc04} a characterization of those conditions that make the Laplacian self-adjoint is provided relating a parametrization similar to~\eqref{selfadj-del3} to the earlier one discussed in~\cite{KosSch99}.
Following the same strategy  we obtain the following  theorem. In order to shorten the notation we will denote the trace vectors as
\[{\bf u}^{0,1}:=\begin{pmatrix} u(0)\\u(\ell)\\-u'(0)\\u'(\ell)\end{pmatrix},\qquad {\bf u}^{3,2}:=\begin{pmatrix} -u'''(0)\\u'''(\ell)\\-u''(0)\\-u''(\ell)\end{pmatrix}.\]

\begin{theorem}\label{thm:charly}
For any realization $A_0\subset A\subset A_{\max}$, the following assertions are equivalent:
\begin{enumerate}[(i)]
\item $A$ is self-adjoint on $L^2(\mathcal G)$;
\item the vertex conditions can be written as
\begin{equation}\label{condR} 
{\bf u}^{0,1}\in Y,\qquad {\bf u}^{3,2}+R
{\bf u}^{0,1} \in Y^\perp
\end{equation}
where $Y$ is a subspace of $\C^{4E}$ and $R\in\mathcal{L}(Y)$ is  self-adjoint;
\item the vertex conditions can be written as
\begin{equation}\label{condAB} C{\bf u}^{0,1}+B{\bf u}^{3,2}=0\end{equation}
for  $C,B$ $4E\times 4E$-matrices on $\C$ such that the $4E\times 8E$-matrix $(CB)$ has maximal rank and the $4E\times 4E$-matrix
$CB^{*}$ is Hermitian.
\end{enumerate}
\end{theorem}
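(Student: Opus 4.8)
The plan is to prove the equivalence by a chain $(i) \Leftrightarrow (ii) \Leftrightarrow (iii)$, mirroring the strategy used for second-order Laplacians in \cite{Kuc04,KosSch99}. The starting point is the sesquilinear identity \eqref{selfadj-del2}: an intermediate realization $A_0 \subset A \subset A_{\max}$ is symmetric precisely when $\langle {\bf u}^{0,1}, {\bf v}^{3,2}\rangle_{\C^{4E}} = \langle {\bf u}^{3,2}, {\bf v}^{0,1}\rangle_{\C^{4E}}$ for all $u,v$ in $D(A)$, where I have reused the bilinear form on the right-hand side of \eqref{selfadj-del2} (note the sign arrangement in the definitions of ${\bf u}^{0,1}$, ${\bf u}^{3,2}$ is chosen exactly so that the pairing becomes the standard Hermitian one). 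The abstract content is therefore a linear-algebra statement about subspaces of $\C^{8E}$ equipped with the symplectic/Hermitian form $\omega\big(({\bf u}^{0,1},{\bf u}^{3,2}),({\bf v}^{0,1},{\bf v}^{3,2})\big) = \langle {\bf u}^{0,1},{\bf v}^{3,2}\rangle - \langle {\bf u}^{3,2},{\bf v}^{0,1}\rangle$: self-adjoint realizations correspond to maximal isotropic (Lagrangian) subspaces. This is the same boundary-triple mechanism that underlies the theory in \cite{KosSch99}; I would state it as a lemma or simply invoke the cited references.

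For $(ii) \Rightarrow (i)$: given $Y$ and self-adjoint $R \in \mathcal L(Y)$, I would check directly that the space $M := \{({\bf x},{\bf y}) \in \C^{4E}\times\C^{4E} : {\bf x} \in Y,\ {\bf y} + R{\bf x} \in Y^\perp\}$ is Lagrangian. Isotropy: for two elements of $M$, decompose ${\bf y}_j = -R{\bf x}_j + {\bf z}_j$ with ${\bf z}_j \in Y^\perp$; then $\langle {\bf x}_1,{\bf y}_2\rangle - \langle {\bf y}_1,{\bf x}_2\rangle = \langle {\bf x}_1, -R{\bf x}_2 + {\bf z}_2\rangle - \langle -R{\bf x}_1 + {\bf z}_1, {\bf x}_2\rangle = -\langle {\bf x}_1, R{\bf x}_2\rangle + \langle R{\bf x}_1,{\bf x}_2\rangle = 0$ using ${\bf x}_j \in Y \perp {\bf z}_k$ and $R = R^*$. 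Dimension count: $M$ has dimension $\dim Y + \dim Y^\perp = 4E$, which is half of $8E$, so $M$ is maximal isotropic, and hence the corresponding realization is exactly self-adjoint (not merely symmetric). Conversely $(i) \Rightarrow (ii)$: given a Lagrangian $M$, set $Y := P_1 M$ (projection onto the first $\C^{4E}$ factor, i.e.\ the space of admissible ${\bf u}^{0,1}$); one shows that over each ${\bf x} \in Y$ the fibre $\{{\bf y} : ({\bf x},{\bf y}) \in M\}$ is an affine translate of $Y^\perp$, the translation being a well-defined linear map ${\bf x} \mapsto -R{\bf x}$ modulo $Y^\perp$, and self-adjointness of $R$ on $Y$ follows from isotropy of $M$ by reversing the computation above. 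This is the one place requiring a little care: one must verify that $R$ is well-defined (independent of the representative ${\bf y}$, which holds because two choices differ by an element of $Y^\perp$) and that it can be chosen to map $Y$ into $Y$ (compose with the orthogonal projection onto $Y$), and that this does not disturb the defining condition since the ambiguity lives in $Y^\perp$.

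For $(ii) \Leftrightarrow (iii)$: this is the standard translation between the "projection + perturbation" form and the "two-matrix" form, exactly as in \cite[\S~4]{Kuc04}. Given $(ii)$, let $P$ be the orthogonal projection onto $Y$ and $Q = \Id - P$ onto $Y^\perp$; then \eqref{condR} is equivalent to the pair $Q{\bf u}^{0,1} = 0$ and $Q({\bf u}^{3,2} + R{\bf u}^{0,1}) = 0$, i.e.\ to $C{\bf u}^{0,1} + B{\bf u}^{3,2} = 0$ with $C := Q + QR$ (understood as $QRP$) ... more cleanly, with the block choice $C := \binom{Q}{QRP}$, $B := \binom{0}{Q}$ stacked to a $4E\times 4E$ matrix after identifying $\C^{4E} \cong \operatorname{ran} P \oplus \operatorname{ran} Q$; the maximal-rank and Hermiticity conditions on $(C\,B)$ and $CB^*$ then reduce to $Y$ being a genuine subspace and $R = R^*$. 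For the converse, from $(C\,B)$ of maximal rank and $CB^*$ Hermitian one recovers $Y$ and $R$ by a dimension argument: $\ker$ considerations show $Y := \{{\bf x} : B^* {\bf z} = 0 \text{ for all } {\bf z} \in \ker C^* \ldots\}$ — concretely one takes $Y := (\operatorname{ran} B^*)^\perp$ restricted appropriately, or argues via the known equivalence for abstract boundary conditions. I would present this direction somewhat tersely, citing \cite{Kuc04} for the template and only indicating the modifications forced by the fourth-order (rather than second-order) trace spaces, which are purely notational since both reduce to $\C^{4E} \times \C^{4E}$.

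The main obstacle I anticipate is the bookkeeping in $(i) \Rightarrow (ii)$ and in the $(ii) \Rightarrow (iii)$ direction where one must exhibit concrete matrices $C, B$ and verify the rank/Hermiticity conditions — the conceptual content (Lagrangian subspaces of a Hermitian symplectic space) is completely standard, but the sign conventions baked into ${\bf u}^{0,1}$ and ${\bf u}^{3,2}$ mean that every identity has to be checked against \eqref{selfadj-del2} rather than quoted blindly. A secondary point worth a sentence is that symmetric realizations satisfying \eqref{condR} are automatically self-adjoint (the dimension count gives a \emph{maximal} isotropic subspace, so no proper self-adjoint extension is possible), which is what upgrades the "sufficient condition" language preceding the theorem to an equivalence.
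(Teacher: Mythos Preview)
Your approach is correct and conceptually cleaner than the paper's, though it takes a genuinely different route. You frame the problem abstractly via Lagrangian subspaces of $(\C^{8E},\omega)$ and invoke boundary-triple machinery; the paper instead verifies the implications $(i)\Rightarrow(ii)\Rightarrow(iii)\Rightarrow(i)$ by direct manipulation of the identity~\eqref{selfadj-del2}, never naming the symplectic structure. In particular, the paper's $(i)\Rightarrow(ii)$ argument starts by \emph{assuming} the domain is already described by some $(Y,R)$ and then extracts self-adjointness of $R$ from~\eqref{selfadj-del2}; your projection argument (set $Y:=P_1 M$, show the fibre over $0$ is exactly $Y^\perp$ by a dimension count, define $R$ modulo $Y^\perp$) is both more transparent and more complete, since it actually \emph{produces} $(Y,R)$ from an arbitrary self-adjoint realization. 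For $(ii)\Leftrightarrow(iii)$ both you and the paper defer the core step to \cite{Kuc04}: the paper chooses $Y=\operatorname{Rg}B^*$, $Y^\perp=\ker B$ and $R=(Q_1BQ)^{-1}C$ explicitly, then cites \cite[Cor.~5]{Kuc04}, while you sketch a block construction and cite the same source.

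Two small repairs in your write-up. First, in your $(ii)\Rightarrow(iii)$ paragraph the second condition should read $P({\bf u}^{3,2}+R{\bf u}^{0,1})=0$, not $Q(\cdots)=0$: membership in $Y^\perp$ means the $Y$-component vanishes, so it is the projector onto $Y$ that kills it. Second, your block description of $C,B$ is muddled; it is cleaner to follow the paper and simply \emph{define} $B$ by $\operatorname{Rg}B^*=Y$ and then set $C$ so that $(Q_1BQ)^{-1}C=R$ on $Y$, after which $CB^*=CB^*$ Hermitian is immediate from $R=R^*$, and maximal rank of $(C\ B)$ follows from $\dim Y+\dim Y^\perp=4E$. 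Your dimension-count remark (that~\eqref{condR} always gives a \emph{maximal} isotropic subspace, so symmetry upgrades to self-adjointness automatically) is correct and worth keeping; the paper encodes the same fact via the maximal-rank hypothesis on $(C\ B)$ in~(iii).
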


\begin{proof}
(i)$\Rightarrow$(ii) As observed before, self-adjointness of $A$ implies \eqref{selfadj-del2}. Let $u\in \h^4(\mathcal G)$ satisfying \eqref{condR} and $v\in \bigoplus_{\me\in\mE} C^\infty(0,\ell_\me)$. Then, $v$ satisfies \eqref{condR}, too. Indeed, condition \eqref{condR} says that the vectors ${\bf u}^{3,2}+R{\bf u}^{0,1}$ and ${\bf u}^{0,1}$ are orthogonal, which implies ${\bf u}^{3,2}+R^*{\bf u}^{0,1}$ and ${\bf u}^{0,1}$ are orthogonal, too. In particular, one has ${\bf u}^{3,2}_Y+R^*{\bf u}^{0,1}_Y=0$. Moreover, \eqref{selfadj-del2} holds on both $Y^\perp$ and $Y$. Then, for the first one, one has
\[0=\left({\bf u}^{3,2},{\bf v}^{0,1}\right)_{Y^\perp},\]
hence ${\bf v}^{0,1}\in Y$. For the second one
\begin{align*}
\left({\bf u}^{0,1},{\bf v}^{3,2}\right)_Y&=\left({\bf u}^{3,2},{\bf v}^{0,1}\right)_Y=-\left(R^*{\bf u}^{0,1},{\bf v}^{0,1}\right)_Y=-\left({\bf u}^{0,1},R{\bf v}^{0,1}\right)_Y
\end{align*}
so that 
\[\left({\bf u}^{0,1},{\bf v}^{3,2} +R{\bf v}^{0,1}\right)_Y=0\]
and
\[{\bf v}^{3,2}+R{\bf v}^{0,1}\in Y^\perp.\]

Now, since $u$ and $v$ satisfy \eqref{condR} and \eqref{selfadj-del2} holds, one has
\[\left({\bf u}^{0,1},{\bf v}^{3,2}\right)_Y=\left({\bf u}^{3,2},{\bf v}^{0,1}\right)_Y\]
i.e.,
\[\left({\bf u}^{0,1},R{\bf v}^{0,1}\right)_Y=\left(R{\bf u}^{0,1},{\bf v}^{0,1}\right)_Y.\]
 Thus, $R$ is self-adjoint.

(ii)$\Rightarrow$(iii) Choose $C$ and $B$ such that $Y=\,$Rg$\,B^*$, $Y^\perp=\ker B$  and $R=(Q_1BQ)^{-1}C$ where $Q$ and $Q_1$ are the orthogonal projections onto Rg $B^*$ and Rg $B$ respectively. Here Rg denotes the range of a matrix. Observe that the mapping $Q_1BQ:$ Rg $B^*\to$  Rg $B$ is invertible follows from the decomposition $\C^{4E}=\ker B\oplus$ Rg $B^*$. Indeed, $(Q_1BQ)h=0$ for some $h\in$ Rg $B^*$ implies $Bh=0$ and then $h\in\ker B$. Hence, the only possibility is $h=0$. Furthermore, since $R$ is self-adjoint $(Q_1BQ)^{-1}C$ is self-adjoint, which implies $B^{-1}C$ is self-adjoint, too, i.e., $B^{-1}C=C^*{(B^*)}^{-1}$. Multiplying by appropriate inverse matrices from both sides we get  $CB^*$ is self-adjoint. Then, since $A$ acts as the fourth derivative on each edge, one needs to establish four boundary conditions per edge. Therefore for a function in $D(A)$ we need exactly $4E$ conditions. This is assured with \eqref{condR} because $Y$ is a subspace of $\C^{4E}$. Thus, in order to impose the right number of conditions with \eqref{condAB}, the rank of the matrix $(CB)$ should be maximal.  Finally,  condition \eqref{condR} says in particular that 
\[{\bf u}^{0,1}\in\ \textrm{Rg}\ B^*\ \textrm{and}\ {\bf u}^{3,2}+(Q_1BQ)^{-1}C{\bf u}^{0,1}\in\ker B,\]
which is, denoting by $P$ the orthogonal projection onto $\ker B$,
\[P{\bf u}^{0,1}=0\ \textrm{and}\ Q{\bf u}^{3,2}+(Q_1BQ)^{-1}CQ{\bf u}^{0,1}=0.\]
By \cite[Cor.~5]{Kuc04}, this is equivalent to
\eqref{condAB}. 

(iii)$\Rightarrow$(i)
 In order to prove self-adjointness of $A$ we have to establish two facts: (a) if $u$ and $v$ satisfy \eqref{condAB}, then \eqref{selfadj-del2} holds and (b) if $u$ satisfies \eqref{condAB} and \eqref{selfadj-del2} holds, then $v$ also satisfies \eqref{condAB}. For the first assertion, consider 
\[\left({\bf u}^{0,1},{\bf v}^{3,2}\right)=-\left({\bf u}^{0,1},B^{-1}C{\bf v}^{0,1}\right)\]
and similarly
\[\left({\bf u}^{3,2},{\bf v}^{0,1}\right)=-\left(B^{-1}C{\bf u}^{0,1},{\bf v}^{0,1}\right).\] 
Then, since $B^{-1}C$ is self-adjoint, one obtains 
\[\left({\bf u}^{0,1},{\bf v}^{3,2}\right)=\left({\bf u}^{3,2},{\bf v}^{0,1}\right)\]
which yields \eqref{selfadj-del2}.

For the second assertion, since $u$ satisfies \eqref{condAB} and $CB^*$ is self-adjoint, then for any $h\in\C^{4E}$ one can write 
\[{\bf u}^{0,1}=-B^*h\ \textrm{and}\ {\bf u}^{3,2}=C^*h.\]
Substituting in \eqref{selfadj-del2} one obtains
\[0=\left(C^*h, {\bf v}^{0,1}\right)-\left(-B^*h,{\bf v}^{3,2}\right)=\left(h,C{\bf v}^{0,1}+B{\bf v}^{3,2}\right).\]
Since $h$ is arbitrary, it follows 
\[C{\bf v}^{0,1}+B{\bf v}^{3,2}=0\]
and then $v$ satisfies \eqref{condAB}.  
\end{proof}

Let us stress that the content of Theorem~\ref{thm:charly} is simply an assertion about extensions of vector-valued functions and is not yet taking into account the structure of the underlying network $\mathcal G$.

If local models (say, networks of beams) are considered, it is reasonable to endow operators with vertex interactions that mirror the graph's connectivity by only involving boundary values on adjacent edges.  This is made as follows: once assigned an arbitrary direction to each edge of $\mG$, the entries of any vector in $\C^{2E}$ are bijective with the boundary values $\begin{pmatrix}(\psi_\me(0))_{\me\in\mE}\\(\psi_\me(\ell))_{\me\in\mE}\end{pmatrix}$ of a function $\psi\in \tilde{H}^1(\mathcal G)$ defined on each edge. Let us consider the subspace $c_\mV$ of $\C^{2E}$ that consists of those vectors that are vertex-wise constant: i.e., if a vertex $\mv$ is incident with edges $\me_{j_1},\ldots,\me_{j_n}$, then elements of $c_\mV$ have entries that agree whenever they are associated with the endpoints of $\me_{j_1},\ldots,\me_{j_n}$ corresponding to $\mv$ (in the case of a star consisting of $k$ semi-infinite intervals, say, $c_\mV$ would e.g.\ be spanned by $1_{\R^k}=(1,\ldots,1)$). Hence, imposing the condition that 
\[
\begin{pmatrix}
u(0)\\ u(\ell)
\end{pmatrix}\in c_\mV
\]
is equivalent to require that a function is \textit{continuous in all vertices}, i.e., it attains the same value  on all interval endpoints corresponding to the same vertex.



Before describing a few instances of vertex conditions with interesting properties, we fix some notations regarding vertex conditions. Here and in the following we denote by $\frac{\partial u_\me	}{\partial{\nu}}(\mv)$ the exterior normal derivative of $u_\me$ at $\mv$, i.e., 
\[
\frac{\partial u_\me	}{\partial{\nu}}(\mv):=
\begin{cases}-u'_\me(0)\quad &\hbox{if $\mv$ is the initial endpoint of $\me$}\\
u'_\me(\ell_\me)&\hbox{if $\mv$ is the terminal endpoint of $\me$}.
\end{cases}
\]
Likewise, $\frac{\partial^3 u_\me}{\partial{\nu}^3}(\mv)$ denotes $-u'''_\me(0)$ or $u'''_\me(\ell_\me)$, respectively.

 In analogy with Kirchhoff's classical current law (``In a network of conductors meeting at a point, the algebraic sum of currents  is zero''), the so-called \textit{Kirchhoff conditions} impose that the entries of a vector of current-like values sum up to 0 at each vector of a network. This terminology is rather common in the theory of Laplacians on  networks since~\cite{Bel85,KosSch99} and is canonically referred to the vector of normal derivatives (at each vertex), i.e., it requires that 
 \[
 \sum\limits_{\me\in\mE_\mv}\frac{\partial u_\me}{\partial\nu}(\mv)=0\qquad\hbox{for all }\mv\in\mV,
 \]
 where $\mE_\mv$ is as usual the set of all edges $\mv$ is incident with.
 In our context we will also occasionally consider Kirchhoff-type conditions on the boundary values $u''_\me(\mv)$ of the second derivatives as well as on the third normal derivatives $\frac{\partial u^3_\me}{\partial\nu^3}(\mv)$. 

Let us present an overview of vertex conditions that have appeared in the literature on beam equations on networks, and how they can be discussed with our formalism.}

\begin{ex}\label{ex1} Dekoninck and Nicaise have studied in \cite{DekNic99} the exact controllability problem of
networks of beams
\[
\frac{\partial^2 u_\me}{\partial t^2}(t,x)=-\frac{\partial^4 u_\me}{\partial x^4}(t,x),\qquad t\ge 0,\ \me\in \mE,\ x\in (0,\ell_\me)
\]
under vertex conditions

\begin{equation}\label{condNic1}
\begin{cases}
u_\me(t,\mv)=u_\mf(t,\mv)& \textrm{if}\ \me\cap \mf=\mv,\ t\ge 0,\\
\sum\limits_{\me\in \mE_{\mv}}\frac{\partial u_\me}{\partial \nu}(t,\mv)=0& \forall\ \mv\in \mV,\ t\ge 0,\\
\frac{\partial^2 u_\me}{\partial x^2}(t,\mv)=\frac{\partial^2 u_\mf}{\partial x^2}(t,\mv)& \textrm{if}\ \me\cap \mf=\mv,\ t\ge 0,\\
\sum\limits_{\me\in \mE_{\mv}}	\frac{\partial^3 u_\me}{\partial \nu^3}(t,\mv)=0& \forall\ \mv\in \mV,\ t\ge 0,
\end{cases}
\end{equation}
that is, they require continuity of the function and of the second derivatives at each vertex of $\mathcal G$, and  Kirchhoff condition for the normal  derivatives and third normal derivatives. 
Condition \eqref{condNic1} is equivalent to~\eqref{condR} with
\[Y=\cmv\times \cmv^\perp,\quad R=0.\]
Indeed, one obtains 
\[
\begin{pmatrix}u(0)\\u(\ell)\end{pmatrix}\in\cmv\quad\hbox{and}\quad \begin{pmatrix}-u'(0)\\u'(\ell)\end{pmatrix}\in\cmv^\perp,
\]
which yield continuity of the function at the vertices and the Kirchhoff condition on the normal derivatives, respectively. Consequently, the additional conditions
\[
\begin{pmatrix}-u''(0)\\-u''(\ell)\end{pmatrix}\in\cmv\quad\hbox{and}\quad \begin{pmatrix}-u'''(0)\\u'''(\ell)\end{pmatrix}\in\cmv^\perp,
\]
hold, meaning continuity and Kirchhoff conditions on the second derivatives and third normal derivatives, respectively. 

Observe that the bi-Laplacian whose domain consist of $\h^4(\mathcal G)$-functions satisfying \eqref{condNic1} is the square of the Laplacian $\Delta_{CK}$ with continuity and Kirchhoff conditions  on the normal derivatives.  Such $\Delta_{CK}$ is a favorite object in the theory of operator on networks since~\cite{Rot83,Bel85,Nic86} and is often regarded as the natural counterpart of the Neumann Laplacian on domains. Accordingly, conditions~\eqref{condNic1} are the network analogue of sliding boundary conditions for bi-Laplacians on domains.
\end{ex}

\begin{ex}\label{ex2} Again Dekoninck and Nicaise have studied in \cite{DekNic00} the characteristic equation for the spectrum of the operator acting on functions that satisfy vertex conditions either
\begin{equation}\label{condNic2}
\begin{cases}
u_\me(t,\mv)=u_\mf(t,\mv)& \textrm{if}\ \me\cap\mf =\mv,\ t\ge 0,\\
\frac{\partial u_\me}{\partial\nu}(t,\mv)=\frac{\partial u_\mf}{\partial\nu}(t,\mv)& \textrm{if}\ \me\cap\mf =\mv,\ t\ge 0,\\
\sum\limits_{\me \in \mE_{\mv}}\frac{\partial^2 u_\me}{\partial x^2}(t,\mv)=0& \forall\ \mv\in \mV,\ t\ge 0,\\
\sum\limits_{\me\in \mE_{\mv}}\frac{\partial^3 u_\me}{\partial \nu^3}(t,\mv)=0& \forall\ \mv\in \mV,\ t\ge 0,
\end{cases}
\end{equation}
(analogous to conditions considered in~\cite{CheDelKra87})
or
\begin{equation}\label{condNic3}
\begin{cases}
u_\me(t,\mv)=u_\mf(t,\mv)& \textrm{if}\ \me\cap\mf =\mv,\ t\ge 0,\\
\frac{\partial^2 u_\me}{\partial x^2}(t,\mv)=0 &\forall\ \mv\in \mV,\ t\ge 0,\\
\sum\limits_{\me\in \mE_{\mv}}\frac{\partial^3 u_\me}{\partial \nu^3}(\mv)=0& \forall\ \mv\in \mV,\ t\ge 0.
\end{cases}
\end{equation} 

Condition \eqref{condNic2} can be represented as~\eqref{condR} upon taking
\[Y=\cmv\times \cmv,\quad R=0,\]
whereas condition \eqref{condNic3} corresponds to \[Y=\cmv\times \C^{2E},\quad R=0;\]
 the latter has been  generalized in~\cite{BorLaz04} by adding lower order terms, corresponding to a (possibly non-self-adjoint) term $R\ne 0$.
\end{ex}

\begin{ex}\label{ex3}
Kiik, Kurasov and Usman discuss in \cite{KiiKurUsm15} vertex conditions which take into account the geometry of the underlying graph and depend on the angles $\alpha$ (resp., $\beta,\gamma$) between the edges $\me_2,\me_3$ (resp., between $\me_1,\me_3$ and $\me_1,\me_2$). In the rather special setting of a star consisting of three semi-infinite edges they rigorously derive from physical-geometrical considerations the vertex conditions
\begin{equation}\label{condKiik}
\begin{cases}
u_1(t,0)=u_2(t,0)=u_3(t,0),\\ 
\sin\alpha\cdot \frac{\partial u_1}{\partial x}(t,0)+\sin\beta\cdot \frac{\partial u_2}{\partial x}(t,0)+\sin\gamma \cdot \frac{\partial u_3}{\partial x}(t,0)=0,\\ 
\frac{1}{\sin\alpha}\frac{\partial^2 u_1}{\partial x^2}(t,0)=\frac{1}{\sin\beta}\frac{\partial^2 u_2}{\partial x^2}(t,0)=\frac{1}{\sin\gamma}\frac{\partial^2 u_3}{\partial x^2}(t,0),\\ 
\frac{\partial^3 u_1}{\partial x^3}(t,0)+\frac{\partial^3 u_2}{\partial x^3}(t,0)+\frac{\partial^3 u_3}{\partial x^3}(t,0)=0,
\end{cases}
\end{equation}
provided none of the angles $\alpha, \beta$, and $\gamma$ is equal to 0 or $\pi$; the same conditions were mentioned, without rigorous derivation, already in~\cite{BorLaz04}. (In~\cite{LagLeuSch92,LagLeuSch93}, comparable but physically more realistic derivations lead to various systems of nonlinear equations.)

%

Conditions \eqref{condKiik} can be represented in our standard formalism by taking 
\[
Y=Y_1\times Y_2
\]
with
\[
Y_1=\langle\begin{pmatrix}1\\1\\1\end{pmatrix}\rangle,\qquad   Y_2=\langle\begin{pmatrix}\sin\alpha\\ \sin\beta\\ \sin\gamma\end{pmatrix}\rangle^\perp,\quad \hbox{and}\quad  R=0.\] 
\end{ex}

Motivated by modelling of mechanical systems, further realizations with \textit{dynamic} transmission conditions in the vertices have been studied among others in~\cite{CheDelKra87,LagLeuSch94}. We will discuss such dynamic conditions in a forthcoming paper.

\section{Generation results in $L^2$}\label{sec:selfa}
Let us now turn to the issue of well-posedness of the parabolic equation associated with different realizations of the bi-Laplacian $A$ on $L^2(\mathcal G)$. We are going to focus on the parametrization in~\eqref{condR}, i.e., we consider the edgewise fourth derivative
\[
A:u\mapsto u''''
\]
with domain
\begin{align*}
D(A)&=\Bigg\{u\in \h^4(\mathcal G) : {\bf u}^{0,1}\in Y\, \textrm{and}\, {\bf u}^{3,2}+R{\bf u}^{0,1}\in Y^\perp \Bigg\}.
\end{align*}

 (If there is any risk of confusion, we will stress the dependence of $A$ on the boundary conditions in~\eqref{condR} by writing $A_{Y,R}$.)

Here and in the remainder of this article, we always impose the following.

\begin{ass}\label{ass:standing}
$Y$  is a subspace of $\C^{4E}$ and $R$ is a (bounded, not necessarily self-adjoint) linear operator on $Y$.
\end{ass}

The following trace estimates will be useful in the following.
\begin{lemma}\label{lem:trace}
Let $u\in \h^4(\mathcal G)$, then 
\begin{alignat*}{4}
&|u(0)|^2&&\leq\frac{2}{q}\Vert u\Vert_{L^2(\mathcal G)}^2+q\Vert u'\Vert_{L^2(\mathcal G)}^2, \qquad\ &&|u(\ell)|^2&&\leq\frac{2}{r}\Vert u\Vert_{L^2(\mathcal G)}^2+r\Vert u'\Vert_{L^2(\mathcal G)}^2,\\
&|u'(0)|^2&&\leq\frac{2}{q}\Vert u'\Vert_{L^2(\mathcal G)}^2+q\Vert u''\Vert_{L^2(\mathcal G)}^2, &&|u'(\ell)|^2&&\leq\frac{2}{r}\Vert u'\Vert_{L^2(\mathcal G)}^2+r\Vert u''\Vert_{L^2(\mathcal G)}^2,\\
&|u''(0)|^2&&\leq\frac{2}{q}\Vert u''\Vert_{L^2(\mathcal G)}^2+q\Vert u'''\Vert_{L^2(\mathcal G)}^2,&&|u''(\ell)|^2&&\leq\frac{2}{r}\Vert u'' \Vert_{L^2(\mathcal G)}^2+r\Vert u'''\Vert_{L^2(\mathcal G)}^2,\\
&|u'''(0)|^2&&\leq\frac{2}{q}\Vert u''' \Vert_{L^2(\mathcal G)}^2+q\Vert u''''  \Vert_{L^2(\mathcal G)}^2,  &&|u'''(\ell)|^2&&\leq\frac{2}{r}\Vert u'''\Vert_{L^2(\mathcal G)}^2+r\Vert u''''  \Vert_{L^2(\mathcal G)}^2,
\end{alignat*}
for any positive $q,r\leq\min_{\me\in \mE}\ell_\me$.
\end{lemma}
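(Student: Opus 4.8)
The plan is to prove the trace estimates edge by edge, reducing everything to a single one-dimensional inequality on an interval $[0,\ell_\me]$ and then summing over $\me\in\mE$. On each edge, the key is the elementary identity that for $w\in H^1(0,\ell)$ and $t\in[0,\ell]$,
\[
|w(0)|^2 = |w(t)|^2 - \int_0^t \frac{d}{ds}|w(s)|^2\,ds = |w(t)|^2 - 2\Re\int_0^t w(s)\overline{w'(s)}\,ds.
\]
Averaging this over $t\in[0,q]$ for a parameter $q\le\ell$ (so that the interval $[0,q]$ is legitimately contained in $[0,\ell_\me]$) gives
\[
|w(0)|^2 = \frac{1}{q}\int_0^q |w(t)|^2\,dt - \frac{2}{q}\int_0^q \Re\int_0^t w(s)\overline{w'(s)}\,ds\,dt,
\]
and bounding the inner double integral crudely by $\int_0^q |w(s)|\,|w'(s)|\,ds$ and then using Young's inequality $2ab\le \frac{a^2}{q}\cdot q + \cdots$ — more precisely $2|w||w'|\le \frac{1}{q}|w|^2\cdot\frac{1}{?}$, I will tune the constants so that the $L^2$-term carries coefficient $\tfrac{2}{q}$ and the $\|w'\|^2$ term carries coefficient $q$. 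Concretely, $|w(0)|^2\le \frac1q\|w\|_{L^2(0,\ell)}^2 + \frac1q\|w\|_{L^2(0,\ell)}^2 + q\|w'\|_{L^2(0,\ell)}^2$ after applying $2ab\le \frac{a^2}{q}+qb^2$ inside, which is exactly the claimed form $\frac{2}{q}\|w\|^2 + q\|w'\|^2$ on that edge.

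Next I would apply this with $w=u_\me$ to get the bound on $|u_\me(0)|^2$, with $w=u_\me'$ to get the bound on $|u_\me'(0)|^2$, with $w=u_\me''$ for $|u_\me''(0)|^2$, and with $w=u_\me'''$ for $|u_\me'''(0)|^2$ — each requires only that the relevant derivative lie in $H^1(0,\ell_\me)$, which holds since $u_\me\in H^4(0,\ell_\me)$. The estimates at the right endpoint $\ell_\me$ are obtained symmetrically by averaging over $t\in[\ell_\me-r,\ell_\me]$ instead, with parameter $r\le\ell_\me$, giving the coefficients $\tfrac2r$ and $r$. Then I sum over $\me\in\mE$: since $|u(0)|^2=\sum_{\me\in\mE}|u_\me(0)|^2$ by definition of the trace vector, and likewise $\|u\|_{L^2(\mathcal G)}^2=\sum_\me\|u_\me\|_{L^2(0,\ell_\me)}^2$ etc., the per-edge inequalities add up to the stated global inequalities, provided the same $q,r$ work on every edge; this is guaranteed precisely by the hypothesis $q,r\le\min_{\me\in\mE}\ell_\me$, which ensures the averaging intervals fit inside each edge.

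The only mild subtlety — and the one place to be slightly careful — is the bookkeeping of constants in the Young inequality step so that the final coefficients are exactly $\tfrac2q$ (resp.\ $\tfrac2r$) and $q$ (resp.\ $r$) rather than some other admissible pair; this is a routine but not entirely automatic optimization, and I would carry it out once in detail for $|w(0)|^2$ and then simply invoke it for the derivatives. Nothing here presents a genuine obstacle: the proof is a direct computation, and the role of the connectivity of $\mathcal G$ is nil, exactly as anticipated in the remark that Sobolev spaces on $\mathcal G$ ignore the graph structure.
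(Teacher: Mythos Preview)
Your approach is correct and is essentially the same as the paper's: the paper simply says ``using the fundamental theorem of calculus'' to obtain the edgewise inequality $|u_\me(0)|^2\le \frac{2}{q}\|u_\me\|_{L^2(0,\ell_\me)}^2 + q\|u'_\me\|_{L^2(0,\ell_\me)}^2$ and then remarks that the remaining cases follow likewise, which is exactly your averaging-plus-Young argument spelled out in more detail. Your bookkeeping of constants is fine (the $\frac{1}{q}\int_0^q|w|^2$ term and the Young term $\frac{1}{q}|w|^2+q|w'|^2$ combine to give $\frac{2}{q}$ and $q$ as claimed), and your observation on the role of the constraint $q,r\le\min_\me\ell_\me$ is the one piece of justification the paper leaves implicit.
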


\begin{proof}
Using the fundamental theorem of calculus it is easy to see that
\begin{equation*}
|u_\me(0)|^2\leq\frac{2}{q}\Vert u_\me\Vert_{L^2[0,\ell_\me]}^2+q\Vert u_\me'\Vert_{L^2[0,\ell_\me]}^2\qquad  \hbox{for all }u\in \h^4(\mathcal G)\hbox{ and }q\in (0, 1];
\end{equation*}
the other inequalities can be checked likewise.
\end{proof}

Our favorite parametrization in~\eqref{condR} allows for a slick application of classical methods based on elliptic sesquilinear forms as presented, e.g., in~\cite{DauLio88,Are06,Mug14}. 

\begin{theorem}\label{thm:forma}
The sesquilinear form associated with $A$ is given by
\begin{equation}\label{eq:formmain}
\f(u,v)=\sum\limits_{\me\in\mE}\int_0^{\ell_\me}u_\me''(x)\overline{v_\me''(x)}\,dx-\left(R {\bf u}^{0,1}, {\bf v}^{0,1}\right)_{\mathbb C^{4E}}.
\end{equation}
with domain
\begin{equation}\label{eq:formdomain}
D(\f):=\h^2_Y(\mathcal G):=\Bigg\{u\in \h^2(\mathcal G) : {\bf u}^{0,1}\in Y\Bigg\}.
\end{equation}
\end{theorem}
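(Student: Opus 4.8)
The plan is to verify that the bilinear form $\mathfrak a$ in~\eqref{eq:formmain} with domain~\eqref{eq:formdomain} is a closed, densely defined, sectorial (indeed, bounded below) form, and that the operator it generates is precisely $A_{Y,R}$. The key computation is an integration by parts: for $u\in D(A)$ and $v\in \h^2_Y(\mathcal G)$ one has
\[
\sum_{\me\in\mE}\int_0^{\ell_\me}u_\me''''\,\overline{v_\me}\,dx
=\sum_{\me\in\mE}\int_0^{\ell_\me}u_\me''\,\overline{v_\me''}\,dx
+\sum_{\me\in\mE}\Big[u_\me'''\overline{v_\me}\Big]_0^{\ell_\me}
-\sum_{\me\in\mE}\Big[u_\me''\overline{v_\me'}\Big]_0^{\ell_\me}.
\]
The two boundary sums can be rewritten, using the sign conventions built into $\mathbf u^{0,1}$ and $\mathbf u^{3,2}$, as the $\mathbb C^{4E}$-pairing $\left(\mathbf u^{3,2},\mathbf v^{0,1}\right)$. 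Since $u\in D(A_{Y,R})$ we have $\mathbf u^{3,2}+R\mathbf u^{0,1}\in Y^\perp$ while $\mathbf v^{0,1}\in Y$, so $\left(\mathbf u^{3,2},\mathbf v^{0,1}\right)=-\left(R\mathbf u^{0,1},\mathbf v^{0,1}\right)$, which yields $(Au,v)_{L^2(\mathcal G)}=\mathfrak a(u,v)$ exactly as claimed. This shows $A_{Y,R}$ is contained in the operator $A_\mathfrak a$ associated with the form.

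**Form properties.** First I would check $D(\mathfrak a)=\h^2_Y(\mathcal G)$ is dense in $L^2(\mathcal G)$ (it contains $\bigoplus_\me C_c^\infty(0,\ell_\me)$, which is already dense) and complete for the form norm: $\h^2(\mathcal G)$ is complete and the trace conditions $\mathbf u^{0,1}\in Y$ define a closed subspace, since the trace map $\h^2(\mathcal G)\to\mathbb C^{4E}$ is continuous. For sectoriality/semiboundedness, I would bound the perturbation term $\left(R\mathbf u^{0,1},\mathbf v^{0,1}\right)$: using Lemma~\ref{lem:trace} (the estimates on $|u(0)|^2,|u(\ell)|^2,|u'(0)|^2,|u'(\ell)|^2$ with parameters $q,r$ chosen small) together with the interpolation inequality $\Vert u'\Vert_{L^2}^2\le\epsilon\Vert u''\Vert_{L^2}^2+C_\epsilon\Vert u\Vert_{L^2}^2$, one gets
\[
\big|\left(R\mathbf u^{0,1},\mathbf v^{0,1}\right)\big|\le \epsilon\,\Vert u''\Vert_{L^2(\mathcal G)}\Vert v''\Vert_{L^2(\mathcal G)}+C_\epsilon\Vert u\Vert_{L^2(\mathcal G)}\Vert v\Vert_{L^2(\mathcal G)}
\]
for arbitrarily small $\epsilon>0$; hence the perturbation is form-small with respect to $u\mapsto\sum_\me\int|u_\me''|^2$, and $\mathfrak a$ is a closed sectorial form bounded below.

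**Identifying the operator.** It remains to show the reverse inclusion $A_\mathfrak a\subseteq A_{Y,R}$. Take $u\in D(A_\mathfrak a)$, so there is $w\in L^2(\mathcal G)$ with $\mathfrak a(u,v)=(w,v)_{L^2(\mathcal G)}$ for all $v\in D(\mathfrak a)$. Testing first against $v\in\bigoplus_\me C_c^\infty(0,\ell_\me)$ gives $u_\me''''=w_\me$ weakly on each edge, hence $u\in\h^4(\mathcal G)$ and $A u=w$; already $\mathbf u^{0,1}\in Y$ by membership in $D(\mathfrak a)$. Then, integrating by parts back as above with general $v\in\h^2_Y(\mathcal G)$ and subtracting, one is left with $\left(\mathbf u^{3,2}+R\mathbf u^{0,1},\mathbf v^{0,1}\right)=0$ for all such $v$; since the traces $\mathbf v^{0,1}$ range over all of $Y$ as $v$ ranges over $\h^2_Y(\mathcal G)$, we conclude $\mathbf u^{3,2}+R\mathbf u^{0,1}\in Y^\perp$, i.e.\ $u\in D(A_{Y,R})$.

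**Main obstacle.** The routine but slightly delicate point is the surjectivity of the trace map $\h^2_Y(\mathcal G)\to Y$ needed in the last step (so that the vanishing pairing forces $Y^\perp$-membership rather than membership in a smaller set): one must exhibit, for each prescribed vector $(a,b,c,d)\in Y\subseteq\mathbb C^{4E}$ encoding values and first derivatives at both endpoints of each edge, an $\h^2$-function realizing it — this is standard (cubic Hermite interpolants on each interval, glued with no compatibility needed since $\h^2(\mathcal G)$ has no vertex coupling) but should be stated. The other mild care is tracking the sign conventions in $\mathbf u^{0,1}$ and $\mathbf u^{3,2}$ so that the boundary terms assemble into exactly the pairing appearing in~\eqref{eq:formmain}; this matches the computation already displayed before Theorem~\ref{thm:charly}.
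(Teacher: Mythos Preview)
Your proposal is correct and follows essentially the same route as the paper: establish closedness of $\mathfrak a$ via equivalence of the form norm with the $\h^2(\mathcal G)$-norm (using the trace estimates of Lemma~\ref{lem:trace}), then identify the associated operator with $A_{Y,R}$ by first testing against $\bigoplus_\me C_c^\infty(0,\ell_\me)$ to recover $u''''$ edgewise and then against general $v\in\h^2_Y(\mathcal G)$ to recover the boundary condition $\mathbf u^{3,2}+R\mathbf u^{0,1}\in Y^\perp$. Your explicit flagging of trace surjectivity $\h^2_Y(\mathcal G)\twoheadrightarrow Y$ is a point the paper passes over tacitly (``since $h$ is an arbitrary vector satisfying $\mathbf h^{0,1}\in Y$''), but otherwise the arguments coincide.
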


\begin{proof}
The first step is to prove that $\f$ is closed. Thanks to estimates in Lemma \ref{lem:trace} and the fact that $R$ is bounded it is possible to find a positive constant $C$ such that
$\Vert u\Vert_\f\leq C\Vert u\Vert_{\h^2(\mathcal G)}$ for any $u\in D(\f)$. For the same reasons one can estimate as follows
\begin{align*}
c_1\Vert u\Vert_{\h^2(\mathcal G)}^2&\leq c_2\left(\Vert u\Vert^2_{L^2(\mathcal G)}+\Vert u'\Vert_{L^2(\mathcal G)}^2+\Vert u''\Vert^2_{L^2(\mathcal G)}\right)-\Vert R\Vert_{\bound(Y)}\Vert u\Vert^2_{L^2(\partial\mathcal G)}\\
&\leq c_3 \left(\Vert u\Vert^2_{L^2(\mathcal G)}+\Vert u''\Vert_{L^2(\mathcal G)}^2\right)-\left(R {\bf u}^{0,1}, {\bf u}^{0,1}\right)\\
&\leq c_4\Vert u\Vert_\f^2
\end{align*}
for suitable positive constants such that  $c_2>\Vert R\Vert_{\bound(Y)}\left(2r+\frac{4}{r}\right)$ for $0<r\leq\min_{\me\in \mE}\ell_\me$ and for any $u\in \bigoplus_{\me\in\mE}C_c^\infty(0,\ell_\me)$.
By density, we can thus find two  constants $c,C\geq0$ such that
\begin{equation}\label{norm}c\Vert u\Vert_{\h^2(\mathcal G)}\leq\Vert u\Vert_\f\leq C\Vert u\Vert_{\h^2(\mathcal G)}\qquad \hbox{for all } u\in D(\f),\end{equation}
i.e., the norm associated with $\f$ and the norm of the space $\h^2(\mathcal G)$ are  equivalent, so that $(D(\f),\Vert\cdot\Vert_\f)$ is complete. 
 Thus, it is associated with a closed operator $S$ in $L^2(\mathcal G)$ defined by
\[
\begin{split}
 D(S)&:=\{u\in D(\f):\exists v\in L^2(\mathcal G)\ \textrm{s.t.}\ \f(u,h)=(v,h)\ \textrm{for all}\ h\in D(\f)\},\\
Su&:=v. 
 \end{split}
 \]
We want to show that $S$ coincides with $A$. So, according to the above definition, for any $u\in D(S)\subset D(\f)$ there exists $Su:=v\in L^2(\mathcal G)$ such that 
\begin{equation}\label{pedro} \f(u,h)-\sum\limits_{\me\in\mE}\int_0^{\ell_\me} v_\me(x)\overline{h_\me(x)}\,dx=0\end{equation}
for any  $h\in D(\f)$. Let $h\in D(\f)$ be any smooth function on the edges which vanishes in a neighborhood of each vertex together with its first derivative.
 Then, plugging such $h$ into \eqref{pedro} and integrating by parts, one obtains that
\[v=Su=\frac{d^4u}{dx^4}.\]
In particular, since $v\in L^2(\mathcal G)$ this also implies that $u\in \h^4(\mathcal G)$. Since $u\in D(S)$,  condition 
${\bf u}^{0,1}\in Y$
is satisfied. We have to show the remaining condition in \eqref{condR}. Then, choosing now as $h$ a  non-zero function in a neighborhood of the vertices and integrating by parts in \eqref{pedro}, leads to 
\[\sum\limits_{\me\in\mE}\left[u''_\me(x)\overline{h'_\me(x)}\right]_0^{\ell_\me}-\sum\limits_{\me\in\mE}\left[u'''_\me(x)\overline{h_\me(x)}\right]_0^{\ell_\me}-\left(R {\bf u}^{0,1}, {\bf h}^{0,1}\right)=0,\]
or equivalently
\[\left({\bf u}^{3,2}+R{\bf u}^{0,1}, {\bf h}^{0,1}\right)=0.\]
Since $h$ is an arbitrary vector satisfying  ${\bf h}^{0,1}\in Y$,
the above equality forces $u$ to satisfy 
${\bf u}^{3,2}+R{\bf u}^{0,1}\in Y^\perp$. Then, $u\in D(A)$. 

Conversely, take $u\in D(A)\subset D(\f)$. Then,  for any $h\in D(\f)$
\begin{align*}
\f(u,h)&=\sum\limits_{\me\in\mE}\int_0^{\ell_\me}u_\me''(x)\overline{h_\me''(x)}\,dx-\left(R {\bf u}^{0,1}, {\bf h}^{0,1}\right)\\
&=(A u,h)+\sum\limits_{\me\in\mE}\left[u_\me''(x)\overline{h_\me'(x)}\right]_0^{\ell_\me}-\sum\limits_{\me\in\mE}\left[u_\me'''(x)\overline{h_\me(x)}\right]_0^{\ell_\me}-\left(R {\bf u}^{0,1}, {\bf h}^{0,1}\right)_Y\\
&=(A u,h)-\left({\bf u}^{3,2}, {\bf h}^{0,1}\right)+\left({\bf u}^{3,2}, {\bf h}^{0,1}\right)\\
&=(A u,h).
\end{align*}
Then, $u\in D(S)$. This shows that the operators $A$ and $S$ coincide.
\end{proof}

 Let us further investigate on some special self-adjoint extensions of the operator $A$ making use of the associated form $\f$. 
As our overview in the Examples~\ref{ex1},~\ref{ex2}, and~\ref{ex3} has substantiated, literature on bi-Laplacians on graphs has always focused on models where functions are continuous in the vertices.  Hence, we are mainly interested in extensions  of $A_0$ -- the bi-Laplacian $A$ with minimal domain -- whose domains enforce continuity in the vertices; i.e., in realizations $A_{Y,R}$ such that $Y$ is of the form 
\begin{equation}\label{cont}
Y=\cmv\times Y_2\end{equation} 
for some subspace $Y_2$ of $\C^{2E}$.
How large is this class? 
The following is a classical result obtained in~\cite{Kre47,AndNis70}, cf.~\cite[Chapter~13]{Sch12}.

\begin{theorem}\label{thm:Kre47}
Let $A$ be a symmetric, positive semidefinite operator on a Hilbert space $H$. Then $A$ has a self-adjoint extension if and only if the associated quadratic form is densely defined in $H$.

In this case, there exist precisely two extensions $A_K,A_F$ of $A$ such that
\begin{itemize}
\item $A_K,A_F$ are self-adjoint and positive semidefinite and
\item any other self-adjoint, positive semidefinite extension $\tilde{A}$ of $A$ satisfies
\[
A_K\le \tilde{A}\le A_F,
\]
i.e., the corresponding quadratic forms $\f_K,\tilde{\f},\f_F$ satisfy
\[
D(\f_F)\subset D(\tilde{\f})\subset D(\f_K)
\]
and
\[
\f_F(u,u)\le \tilde{\f}(u,u)\le \f_K(u,u)\qquad \hbox{for all }u\in D(\f_F).
\]
\end{itemize}
\end{theorem}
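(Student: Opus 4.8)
The substance of the statement lies in the explicit construction of the two extremal extensions $A_F$ and $A_K$ by form methods and in reading off their extremality; the ``only if'' half of the first assertion is immediate, since a symmetric operator is densely defined and hence so is the sesquilinear form $u,v\mapsto(Au,v)$ it induces. It is harmless to assume $A\ge\eps\Id$ for some $\eps>0$: passing from $A$ to $A+\eps\Id$ sets up an order-preserving bijection between the non-negative self-adjoint extensions of $A$ and the self-adjoint extensions of $A+\eps\Id$ dominating $\eps\Id$, and leaves all relevant form domains unchanged.

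I would first build the \emph{Friedrichs extension}. The non-negative symmetric form $\f_0(u,v):=(Au,v)$ on $D(\f_0):=D(A)$ is closable: if $u_n\to0$ in $H$ and $(u_n)$ is Cauchy for $\Vert\cdot\Vert_{\f_0}^2:=\f_0(\cdot,\cdot)+\Vert\cdot\Vert^2$, then $(u_n)$ is $\f_0$-bounded, and writing $\f_0(u_n,u_n)=\f_0(u_n,u_n-u_m)+(u_n,Au_m)$, bounding the first term by the Cauchy--Schwarz inequality for the non-negative form $\f_0$ and letting $n\to\infty$ with $m$ fixed and large in the second yields $\f_0(u_n,u_n)\to0$. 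Its closure $\f_F$ is closed, non-negative and densely defined, so by the first representation theorem it is the form of a unique self-adjoint operator $A_F$ with $A_F\ge\eps\Id$; a short density argument shows $\f_F(u,v)=(Au,v)$ for $u\in D(A)$, $v\in D(\f_F)$, hence $A\subset A_F$. In particular $A$ admits a non-negative self-adjoint extension, which is the ``if'' direction.

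Next I would build the \emph{Krein--von Neumann extension}. Put $N:=\ker A^*=(\rg A)^\perp$, a closed subspace of $H$. Since $\f_F\ge\eps\Vert\cdot\Vert^2$, one has $D(\f_F)\cap N=\{0\}$ (for $f$ in the intersection, $\f_F(f,v)=(f,Av)=0$ for all $v\in D(A)$ by density, so $\f_F(f,f)=0$, so $f=0$), so $D(\f_K):=D(\f_F)\dotplus N$ is a genuine direct sum and
\[
\f_K(u+f,\,v+g):=\f_F(u,v),\qquad u,v\in D(\f_F),\ f,g\in N,
\]
is a well-defined non-negative form. It is closed (if $u_n+f_n$ is $\Vert\cdot\Vert_{\f_K}$-Cauchy then $(u_n)$ is $\f_F$-Cauchy, hence converges in $D(\f_F)$ and in $H$, hence $f_n=(u_n+f_n)-u_n$ converges in $H$ to some $f\in N$, and one passes to the limit), it is densely defined since $D(\f_K)\supseteq D(\f_F)$, and using $(Au,g)=(u,A^*g)=0$ for $u\in D(A)$, $g\in N$ one checks $A\subset A_K$, where $A_K\ge0$ is the self-adjoint operator associated with $\f_K$.

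It remains to prove extremality and uniqueness. Let $\tilde{A}$ be any non-negative self-adjoint extension, with form $\tilde{\f}$. Since $\tilde{\f}$ is a closed form extending $\f_0$ and $\f_F$ is the \emph{smallest} such, one gets $\f_F\subseteq\tilde{\f}$, i.e.\ $D(\f_F)\subseteq D(\tilde{\f})$ with $\tilde{\f}=\f_F$ on $D(\f_F)$, which is the content of $\tilde{A}\le A_F$. For $A_K\le\tilde{A}$ one needs $D(\tilde{\f})\subseteq D(\f_K)$ together with the corresponding form inequality, and here I would invoke the von Neumann-type decomposition of $D(A^*)$ adapted to the semibounded setting to split every $u\in D(\tilde{\f})$ as $u=u_0+f$ with $u_0\in D(\f_F)$, $f\in N$, and $\tilde{\f}(u,u)\ge\f_F(u_0,u_0)=\f_K(u,u)$. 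Uniqueness of $A_F$ and $A_K$ among non-negative self-adjoint extensions then follows from the first representation theorem together with the fact that $D(\f_F)$ (resp.\ $D(\f_K)$) is the smallest (resp.\ largest) admissible form domain; note also that the displayed inequalities on $D(\f_F)$ are in fact equalities, since the above argument shows that \emph{every} non-negative self-adjoint extension of $A$ has a form restricting to $\f_F$ on $D(\f_F)$. The one genuinely non-trivial point is this last decomposition of $D(\tilde{\f})$ and the inequality $\tilde{\f}(u,u)\ge\f_F(u_0,u_0)$ — showing that the ``regular part'' of $u\in D(\tilde{\f})$ still lies in $D(\f_F)$ and that splitting off the harmonic part in $\ker A^*$ only decreases the form value — which is exactly where Krein's and Ando--Nishio's analysis of the ordered set of non-negative self-adjoint extensions (equivalently, their parametrisation by non-negative operators on $N$) enters; all other steps are routine form-theoretic manipulations.
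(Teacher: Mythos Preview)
The paper does not give its own proof of this theorem: it is stated as a classical result and simply attributed to \cite{Kre47,AndNis70}, with a pointer to \cite[Chapter~13]{Sch12} for a textbook treatment. So there is nothing to compare your argument against.

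On its own merits your sketch is essentially the standard route (closure of the form for $A_F$; the Ando--Nishio decomposition $D(\f_F)\dotplus\ker A^*$ for $A_K$), and you are honest about where the genuine work lies, namely in showing that every $u\in D(\tilde\f)$ splits as $u_0+f$ with $u_0\in D(\f_F)$, $f\in\ker A^*$, and $\tilde\f(u,u)\ge\f_F(u_0,u_0)$. One small point worth tightening: the reduction ``assume $A\ge\eps\Id$'' is correct but a little glib, since the Krein--von Neumann extension of $A+\eps$ is built from $\ker(A^*+\eps)$, not from $\ker A^*$; the bijection you describe does hold, but it is order-theoretic (smallest non-negative extension $\leftrightarrow$ smallest extension $\ge\eps$) rather than a literal identification of the defect spaces. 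Your final remark that on $D(\f_F)$ the displayed inequalities collapse to equalities is correct and in fact sharper than what the paper's statement suggests.
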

The operators $A_K$ and $A_F$ are called  \emph{Krein--von Neumann} and \emph{Friedrichs extension} of $A$, respectively: with respect to the partial order just introduced, they are the smallest and the largest extensions, respectively. 

\begin{ex}\label{ex-nickur}
When looking for self-adjoint realizations of the bi-Laplacian it seems natural to restrict our maximal operator $A_{\max}$  to the  space
\[
D_{cont}:=\{f\in C(\mathcal G) : f'\in \bigoplus C^\infty_c(0,\ell_\me)\},
\]
where $C(\mathcal G)$ denotes, of course, the space of scalar-valued functions defined on $\mathcal G$ that are continuous on each edge \textit{and also in the vertices}.

 One immediately sees that $D_{cont}$ is dense in $L^2(\mathcal G)$. Taking the restriction onto $D_{cont}$ of the sesquilinear form
 \[
 \f(u,v)=\sum\limits_{\me\in\mE}\int_0^{\ell_\me}u_\me''(x)\overline{v_\me''(x)}\,dx
 \]
 and closing it up one finds the symmetric, positive semidefinite form $(\f,D_F(\f))$: it is the form associated with the  \textit{Friedrichs extension} $A_F$ of $A_{|D_{cont}}$. Its domain $D_F(\f)$ consists of the space of all functions in $\bigoplus_{\me\in\mE} H^4(0,\ell_\me)$ 
\begin{itemize}
\item that are continuous in the vertices;
\item whose first derivatives vanish in the vertices;
\end{itemize}
self-adjointness of $A_F$ then uniquely determine the remaining transmission conditions in the vertices, which amount to further restricting to functions 
\begin{itemize}
\item whose third normal derivatives satisfy a Kirchhoff condition.
\end{itemize}
Summing up we have found the transmission conditions
\begin{equation}\label{condNic-friedr}
\begin{cases}
u_\me(\mv)=u_\mf(\mv)& \textrm{if}\ \me\cap \mf=\mv,\\
\frac{\partial u_\me}{\partial \nu}(\mv)=0& \forall\ \mv\in \mV,\\
\sum\limits_{\me\in \mE_{\mv}}	\frac{\partial^3 u_\me}{\partial \nu^3}(\mv)=0& \forall\ \mv\in \mV.
\end{cases}
\end{equation}
Observe that the second condition, which is outer first derivatives zero at each vertex, implies that no conditions are imposed on the second derivatives of the function.
In our usual formalism, these conditions correspond to the choice
\[
Y=\cmv\times \{0_{\C^{2E}}\},\quad R=0.
\]

(If $E=1$, i.e., the network consists of just one interval, then the above Friedrichs extension is  the bi-Laplacian with sliding boundary conditions. We emphasize that this construction strongly depends on our choice of $A_{|D_{cont}(A)}$ as a reference operator; had we chosen $A_{|C^\infty_c(0,\ell_\me)}$ and not bothered to enforce continuity as a means to describe the graph's connectivity, then we would have found the bi-Laplacian on a collection of disconnected edges with clamped conditions instead. Different Friedrichs extensions have actually been discussed in the literature, if the minimal operator is endowed  at a given subset of vertices with zero conditions on the boundary values of $u$~\cite[\S~8.3]{DagZua06}, or else on the boundary values of both $u$ and $u'$~\cite[\S~2]{DekNic99}.)
\end{ex}

\begin{ex}
Let us finally look for the \textit{Krein--von Neumann extension} $A_K$ of $A_{|D_{cont}(A)}$. In order to determine it, let us adapt the computations in~\cite[\S~4]{Mug15b}: we look for a self-adjoint extension with maximal form domain $\tilde{H}^2_Y(\mathcal G)$ and whose additional boundary conditions
\begin{equation*}
{\bf u}^{3,2}+\Lambda {\bf u}^{0,1}\in Y^\perp
\end{equation*}
are trimmed to make the extension as small as possible. Here $\Lambda$ is a self-adjoint operator acting on $\C^{4E}$; furthermore, $Y$ should be maximal among those subspaces that encode the continuity conditions, hence 
a natural candidate is $Y=\cmv\times \C^{2E}$.
The associated quadratic forms are then given by
\begin{equation*}\label{eq:aform}
u\mapsto \sum\limits_{\me\in\mE}\int_0^{\ell_\me}|u_\me''|^2 \ dx - 
\left(\Lambda {\bf u}^{0,1},{\bf u}^{0,1}\right).
\end{equation*}
This form is clearly non-negative if $\Lambda$ is negative semidefinite; our goal is to find the largest $\Lambda$ for which the estimate
\begin{equation}\label{eq:thissum}
\sum\limits_{\me\in\mE}\int_0^{\ell_\me}|u_\me''|^2 \ dx \ge  
\left(\Lambda {\bf u}^{0,1},{\bf u}^{0,1}\right)
\end{equation}
is still satisfied. By Hölder's inequality
\[
\int_0^{\ell_\me}|w|^2 dx\ge \frac{1}{\ell_\me} \left| \int_0^{\ell_\me} w\,dx\right|^2 \qquad \hbox{for all }w\in L^2(0,\ell_\me),
\]
and this estimate is optimal: the right-hand side in~\eqref{eq:thissum} is made as small as possible by enforcing
\[
\left(\Lambda {\bf u}^{0,1},{\bf u}^{0,1}\right)=\sum\limits_{\me\in\mE}\frac{1}{\ell_\me}\left|\int_0^{\ell_\me}u_\me''\,dx\right|^2,
\]
i.e.,
\begin{equation}\label{eq:matmultkr}
\left(\Lambda {\bf u}^{0,1},{\bf u}^{0,1}\right)=
\sum\limits_{\me\in \mE}  \frac{|u'(\me^+)- u'(\me^-)|^2}{\ell_\me },
\end{equation}
where we have denoted by $\me^+$ (resp.\ $\me^-$) the terminal (resp., initial) endpoint of $\me$.

A direct matrix multiplication shows that~\eqref{eq:matmultkr} is satisfied if $\Lambda$ has a block structure 
\[
\Lambda=\begin{pmatrix}
\lineskip=0pt
0 &\linie & 0\\ 
\noalign{\hrule}
0 &\linie & \Lambda_0
\end{pmatrix}
\]
with $\Lambda_0:=\frac12 JL^{-1}J$, where $L:=\diag(\ell_{\me_1},\ldots,\ell_{\me_E},\ell_{\me_1},\ldots,\ell_{\me_E})$ and the symmetric matrix $J$ is defined by
\[
J:=\begin{pmatrix}
\lineskip=0pt
\Id_E &\linie & \Id_E\\ 
\noalign{\hrule}
\Id_E &\linie & \Id_E
\end{pmatrix}\ .
\]
Summing up, we have found that the  Krein--von Neumann extension of $A_{|D_{cont}(A)}$ is determined by the non-local transmission conditions 
\begin{equation}\label{condNic-krein}
\begin{cases}
u_\me(\mv)=u_\mf(\mv)& \textrm{if}\ \me\cap \mf=\mv,\\
-u''_\me(0)=-u''_\me(\ell_\me)=\frac{u'_\me(\ell_\me)-u'_\me(0)}{\ell_\me}& \forall\ \me\in \mE,\\
\sum\limits_{\me\in \mE_{\mv}}	\frac{\partial^3 u_\me}{\partial \nu^3}(\mv)=0& \forall\ \mv\in \mV,
\end{cases}
\end{equation}
corresponding to the choice
\[Y=\cmv\times \C^{2E},\quad R=\Lambda\ .\]
\end{ex}

We conclude this section by explicitly presenting the consequences of our form methods on the analysis of evolution equations.
Even in the non-self-adjoint case, the sesquilinear form $\f$ enjoys good properties, which we summarize in the following. 
\begin{prop}\label{prop:gener}
Under the Assumptions~\ref{ass:standing}, the form $\f$ in~\eqref{eq:formmain}--\eqref{eq:formdomain} is densely defined and continuous.
Furthermore, it is \emph{$L^2(\mathcal G)$-elliptic}, 
i.e., there exist $\mu>0$ and $\omega\in\R$ such that 
\[\Real \f(u)+\omega||u||_{L^2(\mathcal{G})}^2\ge \mu ||u||_{D(\f)}^2\quad \hbox{for all }u\in D(\f),
\]
and of \emph{Lions type}, i.e., there exists $M>0$ such that 
\[
|\Ima \f(u)|\leq M\Vert u\Vert_{D(\f)}\Vert u\Vert_{L^2(\mathcal G)}\quad \hbox{for all }u\in D(\f).
\] 
Thus, the associated operator $-A$ generates a cosine operator function with phase space $\h^2_Y(\mathcal G)\times L^2(\mathcal G)$ and in particular a strongly continuous semigroup $(e^{-tA})_{t\ge 0}$ in $L^2(\mathcal G)$ that is analytic of angle $\frac{\pi}{2}$. Moreover, the  semigroup is self-adjoint if and only if $R$ is self-adjoint; it is real (i.e., it maps real-valued functions to real-valued functions) if and only if both the matrices $P_Y$ and $R$ are real; it is contractive if $R$ is dissipative. 
Finally, $e^{-tA}$ is of trace class (and in particular compact) for all $t>0$. 
\end{prop}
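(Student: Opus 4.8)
The plan is to verify the three analytic hypotheses on $\f$ and then invoke the standard form-generation theory (as in~\cite{AreBatHie01, Are06, Mug14}) to conclude generation of an analytic semigroup of angle $\frac{\pi}{2}$, and of a cosine operator function. Density of $D(\f)=\h^2_Y(\mathcal G)$ in $L^2(\mathcal G)$ is immediate because $\bigoplus_{\me\in\mE} C_c^\infty(0,\ell_\me)\subset \h^2_Y(\mathcal G)$ regardless of $Y$. Continuity of $\f$ follows exactly as in the proof of Theorem~\ref{thm:forma}: the trace estimates of Lemma~\ref{lem:trace} together with boundedness of $R$ give $|(R{\bf u}^{0,1},{\bf v}^{0,1})|\le \Vert R\Vert_{\bound(Y)}\,|{\bf u}^{0,1}|\,|{\bf v}^{0,1}|\le C\Vert u\Vert_{\h^2(\mathcal G)}\Vert v\Vert_{\h^2(\mathcal G)}$, while the principal part $\sum_\me \int u''\overline{v''}$ is trivially bounded by $\Vert u\Vert_{\h^2}\Vert v\Vert_{\h^2}$.

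For $L^2$-ellipticity, I would again lean on Lemma~\ref{lem:trace}: choosing $r\le\min_\me\ell_\me$ small enough one absorbs the boundary term, namely $|(R{\bf u}^{0,1},{\bf u}^{0,1})|\le \Vert R\Vert_{\bound(Y)}\big(2r+\tfrac4r\big)\big(\Vert u\Vert_{L^2}^2+\Vert u'\Vert_{L^2}^2+\Vert u''\Vert_{L^2}^2\big)$, and then the gap can be closed by interpolating $\Vert u'\Vert_{L^2}^2\le \eps\Vert u''\Vert_{L^2}^2+C_\eps\Vert u\Vert_{L^2}^2$; this yields $\Real\f(u)+\omega\Vert u\Vert_{L^2}^2\ge\mu\Vert u\Vert_{\h^2(\mathcal G)}^2$ for suitable $\mu>0$ and $\omega\in\R$. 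The Lions-type estimate is where the $\f$ is \emph{designed} to behave well: since the principal part $\sum_\me\int|u''_\me|^2$ is real, we have $\Ima\f(u)=-\Ima(R{\bf u}^{0,1},{\bf u}^{0,1})$, which by Lemma~\ref{lem:trace} is bounded by $C\,|{\bf u}^{0,1}|^2\le C'\big(\Vert u\Vert_{L^2}\Vert u'\Vert_{L^2}+\Vert u'\Vert_{L^2}\Vert u''\Vert_{L^2}+\Vert u\Vert_{L^2}^2\big)$; using once more $\Vert u'\Vert_{L^2}\le C(\Vert u\Vert_{L^2}+\Vert u''\Vert_{L^2})$ one bounds this by $M\Vert u\Vert_{\h^2(\mathcal G)}\Vert u\Vert_{L^2(\mathcal G)}=M\Vert u\Vert_{D(\f)}\Vert u\Vert_{L^2(\mathcal G)}$ after using~\eqref{norm}. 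Once these three properties are in place, the generation of a $C_0$-semigroup analytic of angle $\frac{\pi}{2}$ and of a cosine operator function with phase space $\h^2_Y(\mathcal G)\times L^2(\mathcal G)$ is a direct citation of~\cite[Thm.~3.14.11 (or~Exa.~3.14.15)]{AreBatHie01}.

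The qualitative refinements are then read off from the form. Self-adjointness of $(e^{-tA})_{t\ge0}$ is equivalent to symmetry of $\f$, i.e.\ to $R=R^*$, which is Theorem~\ref{thm:charly}(ii). Reality: $\f(\bar u)=\overline{\f(u)}$ precisely when the principal part (always real) and the boundary form are, which forces $P_Y$ (the orthogonal projection defining the constraint ${\bf u}^{0,1}\in Y$) and $R$ to have real entries; this then guarantees that the resolvents, and hence the semigroup, commute with complex conjugation. Contractivity when $R$ is dissipative: then $\Real\f(u)\ge 0$ (the principal part is $\ge0$ and $-\Real(R{\bf u}^{0,1},{\bf u}^{0,1})\ge0$), so $-A$ is dissipative and by Lumer--Phillips the semigroup is contractive. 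Finally, the trace-class property: by~\eqref{norm} the embedding $D(\f)=\h^2_Y(\mathcal G)\hookrightarrow L^2(\mathcal G)$ factors through $\h^2(\mathcal G)\hookrightarrow L^2(\mathcal G)$, which is compact (Rellich on each edge) with eigenvalue decay $\lambda_n\sim cn^{-2}$ by Weyl asymptotics; the analytic semigroup $e^{-tA}$ maps $L^2$ into $D(A)\subset D(\f)$, and since $e^{-tA}=e^{-(t/2)A}e^{-(t/2)A}$ one gets summability of the singular values by elementary estimates on the resolvent of the self-adjoint part, so $e^{-tA}\in\mathcal S_1$ for every $t>0$.

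The main obstacle, modest as it is, is the Lions-type bound: one must make sure the imaginary part of the boundary term is controlled by $\Vert u\Vert_{D(\f)}\Vert u\Vert_{L^2}$ rather than merely by $\Vert u\Vert_{D(\f)}^2$, so that the cosine-function generation theorem applies and yields the full angle $\frac{\pi}{2}$; this is exactly why the trace inequalities in Lemma~\ref{lem:trace} are stated with the splitting $\frac{2}{q}\Vert\cdot\Vert^2 + q\Vert\cdot'\Vert^2$ and a free small parameter $q$, and the interpolation $\Vert u'\Vert_{L^2}^2\le \eps\Vert u''\Vert_{L^2}^2+C_\eps\Vert u\Vert_{L^2}^2$ does the rest. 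Everything else is bookkeeping with the form or a direct appeal to results quoted earlier in the section.
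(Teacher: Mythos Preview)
Your argument tracks the paper's almost step for step: density via $\bigoplus_\me C_c^\infty(0,\ell_\me)$, continuity and $L^2$-ellipticity from the trace Lemma~\ref{lem:trace}, the Lions-type estimate by isolating the (real) principal part, and then citation of the standard form-to-cosine-function machinery. Your trace-class argument is slightly more roundabout than the paper's---which simply invokes~\cite{Gra68} to get that the embedding $\h^2(\mathcal G)\hookrightarrow L^2(\mathcal G)$ is itself of trace class and then factors $e^{-tA}=\iota\circ e^{-tA}\big|_{L^2\to\h^2}$---but the idea is the same and it works.

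There is, however, a genuine gap in the Lions-type step, and in fairness the paper's own proof slides over exactly the same point. You bound $|{\bf u}^{0,1}|^2$ by $C'\big(\|u\|_{L^2}\|u'\|_{L^2}+\|u'\|_{L^2}\|u''\|_{L^2}+\|u\|_{L^2}^2\big)$ and then substitute $\|u'\|_{L^2}\le C(\|u\|_{L^2}+\|u''\|_{L^2})$ to conclude a bound by $M\|u\|_{\h^2}\|u\|_{L^2}$. But the middle term gives
\[
\|u'\|_{L^2}\|u''\|_{L^2}\le C\|u\|_{L^2}\|u''\|_{L^2}+C\|u''\|_{L^2}^2,
\]
and $\|u''\|_{L^2}^2$ is \emph{not} controlled by $\|u\|_{\h^2}\|u\|_{L^2}$. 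In fact the underlying trace inequality $|u'(0)|^2\le M\|u\|_{L^2}\|u\|_{H^2}$ is simply false on a single interval: for $u_\delta(x)=e^{-x/\delta}$ on $[0,1]$ one computes $|u'_\delta(0)|^2=\delta^{-2}$ while $\|u_\delta\|_{L^2}\|u_\delta\|_{H^2}\sim(2\delta)^{-1}$ as $\delta\to 0^+$. Hence for a non-self-adjoint $R$ whose imaginary part couples nontrivially to the $u'$-components of ${\bf u}^{0,1}$ (take $Y=\C^{4E}$ and $R=i\,\diag(0,\ldots,0,1,0,\ldots,0)$ with the $1$ in a $u'$-slot), the estimate $|\Ima\f(u)|\le M\|u\|_{D(\f)}\|u\|_{L^2}$ fails, and neither your argument nor the paper's one-line appeal to Lemma~\ref{lem:trace} justifies the cosine-function generation for general $R$. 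For self-adjoint $R$ there is of course no issue, since then $\Ima\f(u)\equiv 0$.
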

Here and in the following we denote by $P_Y$ the orthogonal projector of $\C^{4E}$ onto $Y$. We emphasize that dissipativity of $R$ is not necessary for contractivity of $(e^{-tA})_{t\ge 0}$, the Krein--von Neumann realization of the bi-Laplacian being a counterexample.
\begin{proof}
Observe that $\bigoplus_{\me\in\mE} C^\infty_c(0,\ell_\me)\subset D(\f)$: hence $D(\f)$ is dense in $L^2(\mathcal G)$. We have already observed that $\f$ is closed and \eqref{norm} holds, then $\f$ is $L^2(\mathcal G)$-elliptic.  As a consequence of the Cauchy--Schwarz inequality, of boundedness of $R$ and of Lemma \ref{lem:trace}, $\f$ is also bounded. Moreover, $\f$ is of Lions type: indeed,  $\f$ can be written as $\f(u,v)=\f_1(u,v)+\f_2(u,v)$ with
\[\f_1(u,v):=\sum\limits_{\me\in\mE}\int_0^{\ell_\me}u_\me''(x)\overline{u_\me''(x)}\,dx, \qquad \f_2(u,v):=-\left( R{\bf u}^{0,1},{\bf v}^{0,1}\right) .\] 
Now, $\f_1$ is symmetric and hence of Lions type, and $\f_2$ satisfies 
\[
|\f_2(u,u)|\leq c\Vert R\Vert_{\bound(Y)}\Vert u\Vert_{D(\f)}\Vert u\Vert_{L^2(\mathcal G)}
\]
thanks to Lemma \ref{lem:trace}. Therefore, see for example \cite[\S~5.6.6]{Are04},  $-A$ generates a cosine operator function and hence by \cite[Theorem 3.14.17]{AreBatHie01} also an analytic semigroup of angle $\frac{\pi}{2}$. 
Self-adjointness follows from symmetry of the form if $R$ is self-adjoint. The characterization of reality follows from~\cite[Prop.~2.5]{Ouh05}, reasoning as in~\cite[Prop.~4.2]{CarMug09}. If $R$ is dissipative, then $\f$ is accretive and $(e^{-tA})_{t\geq0}$ is thus contractive.
Finally, by~\cite[Satz~1]{Gra68}, the embedding of $\iota:\h^2(\mathcal G)\hookrightarrow L^2(\mathcal G)$ is of trace class, hence so is the semigroup on $L^2(\mathcal G)$ in view of the factorization
\[
e^{-t\Delta^2}_{L^2\to L^2}=\iota\circ e^{-t\Delta^2}_{\h^2\to L^2}, \qquad t>0\ .
\]
This concludes the proof.
\end{proof}

 Therefore, we obtain the following. 
 
\begin{prop}
For all $f\in L^2(\mathcal G)$  the parabolic problem associated with $-A_{Y,R}$
\begin{equation}\label{problem}\begin{cases} \frac{\partial u}{\partial t}(t,x)=-\frac{\partial ^4u}{\partial x^4}(t,x) & t\geq 0,\, x\in\mathcal G,\\ \begin{pmatrix} u(t,0)\\u(t,\ell)\\-u'(t,0)\\ u'(t,\ell)\end{pmatrix}\in Y & t\geq0,\\
\begin{pmatrix} -u'''(t,0)\\u'''(t,\ell)\\-u''(t,0)\\-u''(t,\ell)\end{pmatrix}+R\begin{pmatrix} u(t,0)\\u(t,\ell)\\-u'(t,0)\\u'(t,\ell)\end{pmatrix}\in Y^\perp & t\geq0,
\\
 u(0,x)=f(x), & x\in\mathcal G\end{cases} \end{equation}
  admits  a unique solution in $L^2(\mathcal G)$: this is given by $u(t,\cdot)=e^{-tA_{Y,R}}f(\cdot)$, $t\ge 0$. 
\end{prop}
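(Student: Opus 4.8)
The plan is to read the statement off as an essentially immediate consequence of Proposition~\ref{prop:gener} together with the abstract theory of $C_0$-semigroups. By Proposition~\ref{prop:gener}, under Assumption~\ref{ass:standing} the operator $-A_{Y,R}$ generates a strongly continuous semigroup $(e^{-tA_{Y,R}})_{t\ge 0}$ on $L^2(\mathcal G)$ which is analytic of angle $\tfrac{\pi}{2}$; by definition its domain is
\[
D(A_{Y,R})=\Bigl\{u\in \h^4(\mathcal G):{\bf u}^{0,1}\in Y,\ {\bf u}^{3,2}+R{\bf u}^{0,1}\in Y^\perp\Bigr\},
\]
on which $A_{Y,R}$ acts as the edgewise fourth derivative. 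Hence, for $f\in L^2(\mathcal G)$ I would set $u(t,\cdot):=e^{-tA_{Y,R}}f$ and check that this function solves~\eqref{problem}.

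First I would exploit the smoothing effect of analytic semigroups: for every $t>0$ one has $e^{-tA_{Y,R}}f\in D(A_{Y,R})$ (indeed $u(t,\cdot)\in\bigcap_{k\in\N}D(A_{Y,R}^{k})$), so that $u(t,\cdot)\in\h^4(\mathcal G)$ and, by the very definition of $D(A_{Y,R})$, the membership conditions ${\bf u}^{0,1}(t)\in Y$ and ${\bf u}^{3,2}(t)+R{\bf u}^{0,1}(t)\in Y^\perp$ hold for all $t>0$; these are precisely the second and third lines of~\eqref{problem}. Moreover the orbit $t\mapsto u(t,\cdot)$ is differentiable on $(0,\infty)$ in $L^2(\mathcal G)$ with $\frac{\partial u}{\partial t}(t,\cdot)=-A_{Y,R}u(t,\cdot)=-\frac{\partial^4 u}{\partial x^4}(t,\cdot)$, which is the first line, and strong continuity of the semigroup yields $u(t,\cdot)\to f$ in $L^2(\mathcal G)$ as $t\downarrow 0$, i.e.\ the initial condition.

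For uniqueness I would invoke the classical argument for abstract Cauchy problems governed by a generator: given any solution $v$, for fixed $t>0$ the map $s\mapsto e^{-(t-s)A_{Y,R}}v(s,\cdot)$ is differentiable on $[0,t]$ with vanishing derivative, whence $v(t,\cdot)=e^{-tA_{Y,R}}v(0,\cdot)=e^{-tA_{Y,R}}f=u(t,\cdot)$. There is no genuine technical obstacle here; the only point deserving a line of care is the bookkeeping of the notion of solution --- classical (in $L^2(\mathcal G)$) for $t>0$ by analyticity, with the initial datum attained merely in the strong $L^2$-sense when $f\notin D(A_{Y,R})$ --- which I would simply spell out rather than belabour.
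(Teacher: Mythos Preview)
Your proposal is correct and matches the paper's approach: the paper does not give a separate proof but simply states the proposition as an immediate consequence of Proposition~\ref{prop:gener} (``Therefore, we obtain the following''), and what you have written is precisely the standard unpacking of that consequence via the abstract theory of analytic $C_0$-semigroups.
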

A similar assertion holds for the corresponding second order problem with initial conditions
\[
u(0,x)=f(x)\quad\hbox{and}\quad \frac{\partial u}{\partial t}(0,x)=g(x),\qquad x\in\mathcal G,
\]
for all $f\in D(\f)$ and all $g\in L^2(\mathcal G)$.

\begin{rem}\label{rem:federica} 
The first author studied in \cite{GreMil16} the bi-Laplacian operator perturbed by singular potentials in higher dimensions. In particular, she discussed
\[A=-\Delta^2+\frac{c}{|x|^4}\]
in dimension $N\geq5$ and where $c<\left(\frac{N(N-4)}{4}\right)^2$ and proved the  generation of a bounded
holomorphic strongly continuous semigroup on $L^p(\R^N)$ for 
$p$ in an open interval around 2.
The constraint on the constant $c$ is due to the best constant in the Rellich inequality which is a strong tool to prove generation by form methods. Since the Rellich inequality is also valid on the half-line, see \cite{Rel54}, generation results for $A=\frac{d^4}{dx^4}-\frac{c}{x^4}$ could be  obtained straightforwardly at least on $L^2(\mathcal{G})$ on a suitable infinite graph, for example a star consisting of $k$ semi-infinite intervals. 
\end{rem}

\section{Contractivity issues}\label{sec:contra}

In the remainder of this paper, we are going to study further properties of semigroups governing on fourth-order parabolic equations, focusing on similarities with, and differences from, well-known features of heat semigroups generated by usual Laplacians.

\subsection{Ultracontractivity}\label{ultrac}

Because $e^{-tA}$ is for all $t>0$ a trace class  operator and therefore a Hilbert--Schmidt operator, it has an integral kernel of class $L^2(\mathcal G\times \mathcal G)$, i.e., there exists a family $(p(t,\cdot,\cdot))_{t> 0}$ of $L^2(\mathcal G\times \mathcal G)$-kernels such that
the semigroup operators $e^{-tA}$ satisfy
\[e^{-tA}f(x)=\int_\mathcal G p(t,x,y) f(y)\,dy\qquad\hbox{for all }t>0\hbox{ and }f\in L^2(\mathcal G)\hbox{ and a.e. } x\in\mathcal G. \]
Indeed, more is true: 
By the Kantorovitch--Vulikh Theorem the space of bounded linear operators from $L^1(X)$ to $L^\infty(X)$ is isometrically isomorphic to the space of $L^\infty(X\times X)$-kernels whenever $X$ is a $\sigma$-finite measure space (see~\cite{MugNit11} for several generalizations).
Our analytic semigroup boundedly maps $L^2(\mathcal G)$ to $\h^1(\mathcal G)$, hence in particular each operator $e^{-tA_{Y,R}}$  boundedly maps $L^2(\mathcal G)$ into $L^\infty(\mathcal G)$ and so does $e^{-tA_{Y,R^*}}$; by duality both $e^{-tA_{Y,R^*}}$ and $e^{-tA_{Y,R}}$  boundedly map  $L^1(\mathcal G)$ into $L^2(\mathcal G)$. Hence $L^1(\mathcal G)$ to $L^\infty(\mathcal G)$ and by the Kantorovitch--Vulikh Theorem $p(t,\cdot,\cdot)\in L^\infty(\mathcal G\times \mathcal G)$ (indeed, if the form domain $D(\f)$ encodes continuity condition in the vertices and is hence embedded in $C(\mathcal G)$, then $p(t,\cdot,\cdot)\in C(\mathcal G\times \mathcal G)$  by the recent result~\cite[Thm.~2.1]{AreEls17}). It is thus natural to wonder whether $\|T(t)\|_{1\to \infty}$ (or equivalently $\|p(t,\cdot,\cdot)\|_\infty$) can be nicely estimated.

Given a metric measure space $X$, a semigroup $(T(t))_{t\ge 0}$ on $L^2(X)$ is called \textit{ultracontractive} if there exist $q\in (2,\infty]$, $\beta>0$, and $c>0$ such that
\[
\|T(t)\|_{2\to q}\le ct^{-\beta},\qquad t\in (0,1].
\]

We emphasize that ultracontractivity of semigroups generated by second-order elliptic operators is usually showed taking advantage of their $L^\infty$-contractivity, which unfortunately fails in the fourth-order context. Indeed, it seems that ultracontractivity cannot be deduced from standard Sobolev embeddings~\cite[Thm.~6.4]{Ouh05}, since if $u\in D(\f)$ then  $(1\wedge|u|)\,\textrm{sign}\, u\notin \h^2(\mathcal G)$, as its derivative is not even a continuous function; accordingly, Ouhabaz' criterion \cite[Theorem 2.13]{Ouh05} is not satisfied -- regardless of the choice of $Y$ and $R$ -- hence $(e^{-tA})_{t\ge 0}$ is never contractive with respect to the $\infty$-norm. 

It is known that under the additional assumptions that
\[
\sup_{t\in (0,1]}\|T(t)\|_1<\infty\quad\hbox{and}\quad \sup_{t\in (0,1]}\|T(t)\|_\infty<\infty,
\]
ultracontractivity implies the existence of $M>0$ and $\omega\in \mathbb R$ such that
\begin{equation*}\label{eq:defultra}
\|T(t)\|_{1\to \infty}\le Mt^{-2\beta}e^{\omega t},\qquad t>0.
\end{equation*}
Of course, this semigroup is not strongly continuous in $L^\infty(\mathcal G)$ and hence it needs not be exponentially bounded. Failing $L^\infty$-contractivity of $(e^{-tA})_{t\ge 0}$ it is, therefore, a priori not clear whether ultracontractivity would imply the estimate~\eqref{eq:defultra}. However, the following holds.

\begin{prop}\label{prop:ultracontr} 
Under our standing assumption \ref{ass:standing}, the semigroup $(e^{-tA})_{t\geq0}$ 
has an integral kernel of class $L^\infty(\mathcal G\times \mathcal G)$.
If we additionally assume $R$ to be dissipative, then there exist $\varepsilon,M>0$ such that
\begin{equation}\label{1inf}
\Vert e^{-tA}\Vert_{1\to\infty}\leq M t^{-\frac{1}{4}} e^{\varepsilon t}\quad \hbox{for all }t> 0
\end{equation}
holds. In particular, $(e^{-tA})_{t\geq0}$ is ultracontractive.
\end{prop}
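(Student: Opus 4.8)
The plan is to combine the analyticity of $(e^{-tA})_{t\ge0}$ --- which produces smoothing into the form domain $D(\f)=\h^2_Y(\mathcal G)$ --- with a one–dimensional Gagliardo--Nirenberg inequality on $\mathcal G$ and a duality argument, so as to reproduce the exponents appearing in Davies' Theorem~\ref{thm:davies}. First I would record the first assertion, which needs no dissipativity and essentially amounts to the argument sketched just before the statement: for every $t>0$ the analytic semigroup maps $L^2(\mathcal G)$ boundedly into $D(\f)\subset\h^2(\mathcal G)\hookrightarrow\h^1(\mathcal G)\hookrightarrow L^\infty(\mathcal G)$, so $e^{-tA_{Y,R}}\in\bound(L^2(\mathcal G),L^\infty(\mathcal G))$; since $(e^{-tA_{Y,R}})^*=e^{-tA_{Y,R^*}}$ has the same property, duality yields $e^{-tA_{Y,R}}\in\bound(L^1(\mathcal G),L^2(\mathcal G))$, and composing,
\[
e^{-tA_{Y,R}}=e^{-\tfrac t2 A_{Y,R}}\,e^{-\tfrac t2 A_{Y,R}}\in\bound\big(L^1(\mathcal G),L^\infty(\mathcal G)\big)\qquad(t>0);
\]
by the Kantorovitch--Vulikh theorem this is precisely the statement $p(t,\cdot,\cdot)\in L^\infty(\mathcal G\times\mathcal G)$.

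For the quantitative estimate assume now $R$ dissipative, so that $\f$ is accretive and $\Vert e^{-tA}f\Vert_{L^2(\mathcal G)}\le\Vert f\Vert_{L^2(\mathcal G)}$ by Proposition~\ref{prop:gener}. I would use two ingredients. The first is the interpolation inequality
\[
\Vert u\Vert_{L^\infty(\mathcal G)}\le C\,\Vert u\Vert_{\h^2(\mathcal G)}^{1/4}\,\Vert u\Vert_{L^2(\mathcal G)}^{3/4},\qquad u\in\h^2(\mathcal G),
\]
obtained edgewise from the standard Gagliardo--Nirenberg inequalities on a bounded interval (using $\min_{\me\in\mE}\ell_\me>0$ and $\Vert u''\Vert_{L^2(\mathcal G)}\le\Vert u\Vert_{\h^2(\mathcal G)}$). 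The second is the smoothing estimate for analytic semigroups generated by an $L^2$–elliptic form: there are $C>0$ and $\omega\ge0$ with
\[
\Vert e^{-tA}f\Vert_{D(\f)}\le C\,t^{-1/2}e^{\omega t}\,\Vert f\Vert_{L^2(\mathcal G)}\qquad(t>0),
\]
which one deduces from $\Vert (A+\omega)^{1/2}e^{-t(A+\omega)}\Vert_{L^2\to L^2}\le Ct^{-1/2}$, the identification $D\big((A+\omega)^{1/2}\big)=D(\f)$ for sectorial forms, and the norm equivalence~\eqref{norm}. Inserting the second estimate into the first and using $\Vert e^{-tA}f\Vert_2\le\Vert f\Vert_2$ gives
\[
\Vert e^{-tA}\Vert_{2\to\infty}\le M\,t^{-1/8}e^{\varepsilon t}\qquad(t>0),\qquad\varepsilon:=\tfrac{\omega}{4}.
\]
Since $R^*$ is dissipative whenever $R$ is, the same bound holds for $e^{-tA_{Y,R^*}}=(e^{-tA_{Y,R}})^*$, so $\Vert e^{-tA_{Y,R}}\Vert_{1\to2}=\Vert e^{-tA_{Y,R^*}}\Vert_{2\to\infty}\le M t^{-1/8}e^{\varepsilon t}$; composing once more,
\[
\Vert e^{-tA}\Vert_{1\to\infty}\le\Vert e^{-\tfrac t2 A}\Vert_{2\to\infty}\,\Vert e^{-\tfrac t2 A}\Vert_{1\to2}\le 2^{1/4}M^2\,t^{-1/4}e^{\varepsilon t},
\]
which is~\eqref{1inf} after renaming the constants, and ultracontractivity follows at once from the $\Vert e^{-tA}\Vert_{2\to\infty}$ bound (with $q=\infty$, $\beta=\tfrac18$).

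The main obstacle is getting the exponent sharp. Working only with the crude embedding $\h^1(\mathcal G)\hookrightarrow L^\infty(\mathcal G)$ --- i.e.\ spending one quarter of the order of $A$ --- would cost a full derivative and yield merely $\Vert e^{-tA}\Vert_{1\to\infty}\lesssim t^{-1/2}$; to recover Davies' exponents $-\tfrac18$ (for $\Vert\cdot\Vert_{2\to\infty}$) and $-\tfrac14$ (for $\Vert\cdot\Vert_{1\to\infty}$) one genuinely has to operate at the borderline Sobolev level $\h^{3/2}$, which is what the $\tfrac14$–power in the interpolation inequality encodes. A secondary technical point, to be handled carefully, is the pointwise–in–$t$ smoothing estimate in the genuinely non–self–adjoint case: the monotonicity argument available when $R=R^*$ is no longer available, and one should instead rely on the square–root characterisation $D\big((A+\omega)^{1/2}\big)=D(\f)$ of the form domain for sectorial forms.
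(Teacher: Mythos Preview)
Your proof is correct and follows essentially the same route as the paper: Gagliardo--Nirenberg interpolation to pass from an $\h^2$-bound to an $L^\infty$-bound with exponent $1/4$, analytic smoothing to control the $\h^2$-norm of $e^{-tA}f$, $L^2$-contractivity from dissipativity of $R$, then duality via the adjoint semigroup and composition. The only technical difference is that where you invoke the square-root identification $D\big((A+\omega)^{1/2}\big)=D(\f)$ to bound $\|e^{-tA}f\|_{\h^2}$, the paper argues more directly: dissipativity of $R$ gives $\|u''\|_{L^2}^2\le\f(u)$, and for $u=e^{-tA}f$ one has $\f(u)=(Ae^{-tA}f,e^{-tA}f)\le\|Ae^{-tA}f\|_{L^2}\|e^{-tA}f\|_{L^2}\le Ct^{-1}\|f\|_{L^2}^2$ by Cauchy--Schwarz and the elementary analytic estimate $\|Ae^{-tA}\|\le Ct^{-1}$, which sidesteps the non-self-adjoint square-root issue you flagged.
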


\begin{proof}
Let us first recall that $R$ dissipative implies $(e^{-tA})_{t\geq0}$ contractive in $L^2(\mathcal{G})$. 
Consider $u\in\h^2(\mathcal G)$: by an application of the Gagliardo--Nirenberg inequality one obtains
\begin{equation*}
\Vert u\Vert_{L^\infty(\mathcal G)}\leq c_1\Vert u''\Vert_{L^2(\mathcal G)}^{1/4}\Vert u\Vert_{L^2(\mathcal G)}^{3/4}+c_2\Vert u\Vert_{L^2(\mathcal G)}.
\end{equation*}
Then, since $R$ is dissipative
\begin{align}\label{interp}
\Vert u\Vert_{L^\infty(\mathcal{G})}&\leq c_1\left(\f(u)+\left(R{\bf u}^{0,1},{\bf u}^{0,1}\right) \right)^{1/8}\Vert u\Vert_{L^2(\mathcal G)}^{3/4}+c_2\Vert u\Vert_{L^2(\mathcal G)}\nonumber\\
&\leq c_1 \f(u)^{1/8}\Vert u\Vert_{L^2(\mathcal G)}^{3/4}+c_2\Vert u\Vert_{L^2(\mathcal G)}.
\end{align}
Let $f_t=e^{-tA}f$  then by \eqref{interp}, contractivity and analyticity on $L^2$ and Cauchy--Schwarz inequality  it follows that for all $t>0$
\begin{align}\label{ultracontratt}
\Vert e^{-tA}f\Vert_{L^\infty(\mathcal G)}&
\leq c_1 (Ae^{-tA}f,e^{-tA}f)^{1/8}\Vert e^{-tA}f\Vert_{L^2(\mathcal G)}^{3/4}+c_2\Vert e^{-tA}f\Vert_{L^2(\mathcal G)}\nonumber\\
&\leq c_1 \Vert Ae^{-tA}f\Vert_{L^2(\mathcal G)}^{1/8}\Vert e^{-tA}f\Vert_{L^2(\mathcal G)}^{1/8}\Vert f\Vert_{L^2(\mathcal G)}^{3/4}+c_2\Vert f\Vert_{L^2(\mathcal G)}\nonumber\\
&\leq c(t^{-1/8}+1)\Vert f\Vert_{L^2(\mathcal G)}\nonumber\\
&\leq c t^{-1/8}e^{\varepsilon t}\Vert f\Vert_{L^2(\mathcal G)}.
\end{align}
Therefore, $(e^{-tA})_{t\geq0}$ is ultracontractive. 
 Now, apply \eqref{ultracontratt} to the semigroup generated by $A^*$. By duality this yields the bound \eqref{ultracontratt} to hold for the $L^1-L^2$ norm as well; and then
\begin{equation*}\label{re}
 \Vert e^{-tA}f\Vert_{L^\infty(\mathcal G)}\leq M t^{-1/4}e^{\varepsilon t}\Vert f\Vert_{L^1(\mathcal G)},
\end{equation*}
as we wanted to prove.
\end{proof}
Observe that, from the computations in \eqref{ultracontratt}, one also finds
\[
\Vert e^{-tA}f\Vert_{L^\infty(\mathcal G)}\le c(t^{-1/8}+1)^2\Vert f\Vert_{L^1(\mathcal G)}\approx 
C\Vert f\Vert_{L^1(\mathcal G)}\qquad \hbox{as }t\to \infty.
\]



Let us remark that even if it is not $L^\infty$-contractive, the semigroup does nevertheless extrapolate to further $L^p(\mathcal G)$-spaces.

\begin{theorem}\label{thm:extrap}
 Let $Y$ be a subspace of $\C^{4E}$ and $R\in\mathcal{L}(Y)$ be dissipative.
The semigroup $(e^{-tA})_{t\geq0}$ on $L^2(\mathcal G)$ extrapolates to a consistent family of semigroups on $L^p(\mathcal G)$ for every $1\leq p\leq\infty$: they satisfy
\[
\Vert e^{-tA}\Vert_{L^p(\mathcal G)}\leq c t^{-\frac{|2-p|}{4p}}e^{\varepsilon t\frac{|2-p|}{p}},\quad t> 0,\]
for every $1\leq p<\infty$ as well as
\[\Vert e^{-tA}\Vert_{L^\infty(\mathcal G)}\leq c t^{-\frac{1}{4}}e^{\varepsilon t},\quad t> 0.\] 
\end{theorem}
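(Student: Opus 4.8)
The plan is to derive the stated $L^p$-bounds by interpolating between the $L^2$-estimates coming from analyticity and contractivity (Proposition~\ref{prop:gener}) and the $L^1\to L^\infty$ ultracontractive bound~\eqref{1inf} of Proposition~\ref{prop:ultracontr}, and then using duality to pass from $p<2$ to $p>2$. First I would record the two endpoint facts: since $R$ is dissipative, $(e^{-tA})_{t\ge 0}$ is a contraction semigroup on $L^2(\mathcal G)$, so $\Vert e^{-tA}\Vert_{L^2\to L^2}\le 1$ for all $t>0$; and by Proposition~\ref{prop:ultracontr} there are $\varepsilon,M>0$ with $\Vert e^{-tA}\Vert_{L^1\to L^\infty}\le M t^{-1/4}e^{\varepsilon t}$. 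From these, the Riesz--Thorin interpolation theorem applied with the pair $(L^1\to L^\infty)$ and $(L^2\to L^2)$ — more precisely, the consistent extrapolation argument already invoked in the paper for ultracontractive semigroups — yields bounds of the form $\Vert e^{-tA}\Vert_{L^p\to L^p}\le c\,t^{-\gamma(p)}e^{\delta(p)t}$ first for $1\le p\le 2$, and the exponents are determined by linear interpolation of the endpoint data.

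More concretely, for $1\le p\le 2$ write $\frac1p=\frac{1-\theta}{1}+\frac{\theta}{2}$, i.e. $\theta=\frac{2(p-1)}{p}$, so $1-\theta=\frac{2-p}{p}$. Interpolating the map $e^{-tA}\colon L^1\to L^1$ (bounded, norm $\lesssim t^{-1/4}e^{\varepsilon t}$, obtained from the $L^1\to L^2$ half of the proof of Proposition~\ref{prop:ultracontr} together with $L^1\to L^\infty$ — or directly from duality of the $L^2\to L^\infty$ bound) against $e^{-tA}\colon L^2\to L^2$ gives
\[
\Vert e^{-tA}\Vert_{L^p\to L^p}\le c\,t^{-\frac14\cdot\frac{2-p}{p}}\,e^{\varepsilon t\frac{2-p}{p}},\qquad 1\le p\le 2 .
\]
Since $R$ is not assumed self-adjoint, the adjoint semigroup is $(e^{-tA_{Y,R^*}})_{t\ge 0}$; but $R^*$ is dissipative as well whenever $R$ is (this is used already in the proof of Proposition~\ref{prop:ultracontr}), so the same estimate holds for the adjoint semigroup, and taking adjoints turns the $L^p\to L^p$ bound for $1\le p\le 2$ into an $L^{p'}\to L^{p'}$ bound for $2\le p'\le\infty$. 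Writing the exponent for $q=p'\in[2,\infty)$ in terms of $q$ one gets $\frac14\cdot\frac{q-2}{q}$ and $\varepsilon t\frac{q-2}{q}$; combining the two ranges and writing $|2-p|$ in place of $2-p$ resp.\ $p-2$ gives exactly the asserted bound $\Vert e^{-tA}\Vert_{L^p\to L^p}\le c\,t^{-\frac{|2-p|}{4p}}e^{\varepsilon t\frac{|2-p|}{p}}$ for all $1\le p<\infty$. The limiting case $p=\infty$ is handled separately: it is just the $L^\infty\to L^\infty$ bound obtained by duality from the $L^1\to L^1$ estimate at $p=1$, namely $\Vert e^{-tA}\Vert_{L^\infty\to L^\infty}\le c\,t^{-1/4}e^{\varepsilon t}$, which is also the $p\to\infty$ limit of the previous formula. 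Consistency of the family across $p$ follows because all these operators are restrictions/extensions of the same $e^{-tA}$ acting on $L^1\cap L^\infty$, which is dense in every $L^p(\mathcal G)$ with $1\le p<\infty$ (here $\mathcal G$ has finite total measure, being compact).

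The main obstacle I expect is a bookkeeping issue rather than a conceptual one: one has to be careful that interpolation produces a \emph{strongly continuous} semigroup on $L^p(\mathcal G)$ for $1\le p<\infty$ and merely a bounded (not strongly continuous) semigroup on $L^\infty(\mathcal G)$ — this is why the $p=\infty$ estimate is stated separately and why the exponential factor is genuinely needed (the semigroup need not be exponentially bounded on $L^\infty$ in the usual $C_0$ sense, but the stated quantitative bound still holds). A secondary point requiring care is that the $L^1\to L^1$ endpoint is not immediate from Proposition~\ref{prop:ultracontr} as stated (which gives $L^1\to L^\infty$); it is cleanest to obtain it by first interpolating $L^1\to L^\infty$ with $L^1\to L^2$ (the latter from the proof of Proposition~\ref{prop:ultracontr}) to get $L^1\to L^p$ for all $p$, or simply to dualize the $L^2\to L^\infty$ bound to $L^1\to L^2$ and then interpolate with $L^1\to L^\infty$. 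Once the endpoints are correctly assembled, the exponents drop out of Riesz--Thorin mechanically.
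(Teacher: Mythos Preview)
Your approach is essentially the same as the paper's: obtain an $L^1\to L^1$ endpoint bound, interpolate against $L^2$-contractivity for $1\le p\le 2$, and then pass to $2\le p\le\infty$ by duality (using that $R^*$ is dissipative whenever $R$ is). The exponents you compute are exactly those in the statement.

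The one place where the paper is more direct is the $L^1\to L^1$ endpoint, which you flag as a ``secondary point requiring care''. You propose to reach it by combining $L^1\to L^2$ and $L^1\to L^\infty$ bounds (or dualizing $L^2\to L^\infty$). The paper instead exploits that $\mathcal G$ has finite total length, so $L^\infty(\mathcal G)\hookrightarrow L^1(\mathcal G)$, and simply chains
\[
\Vert e^{-tA}f\Vert_{L^1}\le c\,\Vert e^{-tA}f\Vert_{L^\infty}\le c\,t^{-1/4}e^{\varepsilon t}\Vert f\Vert_{L^1}
\]
directly from~\eqref{1inf}. This bypasses your extra interpolation step entirely. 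Your route works too, but the finite-measure embedding is the cleanest way to close the argument here.
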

\begin{proof}
We first observe that since $L^\infty(\mathcal{G})\subset L^1(\mathcal{G})$ and  \eqref{1inf} holds, there exists two positive constants such that for every $t>0$
\[\Vert e^{-tA}f\Vert_1\leq c\Vert e^{-tA}f\Vert_\infty\leq ct^{-1/4}e^{\varepsilon t}\Vert f\Vert_1.\]
By interpolating with the $L^2$-contractivity and duality we find the claimed $L^p-L^p$ bound. 
\end{proof}


\subsection{$L^p$-contractivity of the semigroup generated by the bi-Laplacian on an interval}

Let us now focus on contractivity issues for the semigroup $(e^{-tA})_{t\ge 0}$. We already know that it is contractive with respect to the norm of $L^2(\mathcal{G})$ when $R$ is dissipative, which is the case for the realizations of $A$ considered in the following. 
We are going to prove that contractivity extends to the extrapolated semigroups on $L^p(\mathcal G)$ for an open interval around $2$. (Observe that, by Riesz--Thorin, the set of those $p$ for which $(e^{-tA})_{t\ge 0}$ is $L^p$-contractive has to be connected. On the other hand, we do already know that the semigroup is certainly \textit{not} $L^\infty$-contractive.) 
To this purpose we make use of a characterization of $L^p$-contractivity of semigroups associated with a form $\f$ due to Nittka \cite[Thm.~4.1]{Nit12}: It states that a given semigroup on a $L^2$-space is $L^p$-contractive if and only if the conditions  for $p\geq2$
\begin{equation}\label{eq:carcontr-1}
P_{B^p}(D(\f))\subset D(\f)
\end{equation}
and
\begin{equation}\label{eq:carcontr-2}
\Real \f(u,|u|^{p-2}u)\geq0\ \textrm{for every}\ u\in D(\f)\ \textrm{satisfying}\ |u|^{p-2}u\in D(\f),
\end{equation}
are satisfied, where $P_{B^p}$ denotes the projection of $L^2$ onto the unit ball of $L^p$. The most interesting feature of this characterization is that $P_{B^p}$ cannot be written down explicitly, still an implicit description is useful enough, in most applications: if $u\in L^2$ and $f=P_{B^p}u$, then 
\begin{equation}\label{projection}
u=f+(u-f,f) |f|^{p-2}f.
\end{equation}
It seems that Nittka's elegant characterization has not found many applications so far. As a warm-up, let us consider a simple case which seems to be of independent interest.

Let us consider the bi-Laplacian acting on functions in $L^p(0,1)$ under rather general boundary conditions: we are actually only going to impose Neumann boundary conditions and do not specify further conditions.

\begin{lemma}\label{lem:p-contr}
Consider $\mathcal{G}$ consisting of a single interval, i.e., $\mathcal G=(0,1)$.
The semigroup associated with the quadratic form
\[
\f(u):=\int_0^1 |u''(x)|^2 dx
\]
is $L^p$-contractive for all $p\in [3/2,3]$ whenever its form domain is 
\begin{equation*}
\left\{u\in H^2(0,1):\begin{pmatrix}u(0)\\ u(1) \end{pmatrix}\in Y_1, \begin{pmatrix}u'(0)\\ u'(1) \end{pmatrix}=\begin{pmatrix}0\\ 0 \end{pmatrix} \right\}.
\end{equation*}
for $Y_1=\{(0,0)\}$ or $Y_1=\langle(1,1)\rangle$.
\end{lemma}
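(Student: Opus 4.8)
The plan is to apply Nittka's characterization, so the task splits into verifying the two conditions~\eqref{eq:carcontr-1} and~\eqref{eq:carcontr-2} for $p\in[3/2,3]$; note that $p=2$ is already known, and by Riesz--Thorin it suffices to handle the two endpoints $p=3$ and $p=3/2$, the latter following from the former by duality (the form is symmetric, so the adjoint semigroup is the same semigroup, and $L^p$-contractivity for $p=3$ is equivalent to $L^{p'}$-contractivity for $p'=3/2$). Hence I would concentrate on $p=3$.

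First I would check~\eqref{eq:carcontr-1}: given $u$ in the form domain $D(\f)$ and $f=P_{B^3}u$, I need $f\in D(\f)$. Using the implicit description~\eqref{projection}, $f=u+c|f|^{p-2}f$ where $c=(u-f,f)$ is a scalar (in fact $c\le 0$), so $f(1+c|f|^{p-2})=u$; this is a pointwise algebraic relation forcing $f$ to be a $C^1$ function of $u$ with $f=\phi(u)$ for a fixed increasing Lipschitz function $\phi$ with $\phi(0)=0$ and $\phi'$ bounded. Then $f\in H^2(0,1)$ because $H^2(0,1)\hookrightarrow C^1[0,1]$ and $\phi\circ u\in H^2$ whenever $u\in H^2$ and $\phi\in C^1$ with $\phi'$ Lipschitz-like behaviour on the range of $u$ — here the key point is that $|f|^{p-2}=|f|$ is only Lipschitz, not $C^1$, at $0$, but since $f''$ involves $(|f|)'=\sign f$ times things, one still lands in $L^2$; one must be slightly careful but it goes through. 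Crucially, at the endpoints $f(0)=\phi(u(0))$ and $f(1)=\phi(u(1))$: if $Y_1=\langle(1,1)\rangle$ then $u(0)=u(1)$ implies $f(0)=f(1)$, and if $Y_1=\{(0,0)\}$ then $u(0)=u(1)=0$ gives $f(0)=f(1)=0$; either way the $Y_1$-constraint is preserved. Finally $f'=\phi'(u)u'$, so $f'(0)=\phi'(u(0))\cdot 0=0$ and likewise $f'(1)=0$, so the Neumann condition is preserved. Hence $P_{B^3}(D(\f))\subset D(\f)$.

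Next I would verify the core inequality~\eqref{eq:carcontr-2}: for $u\in D(\f)$ with $v:=|u|^{p-2}u=|u|u\in D(\f)$, show $\Real\int_0^1 u''(x)\overline{v''(x)}\,dx\ge 0$. Writing $\psi(s):=|s|s$ (for real-valued reductions; the complex case reduces to the real one by the usual argument, or one works directly), one has $v=\psi(u)$, $v'=\psi'(u)u'=2|u|u'$, and $v''=2|u|u''+2(\sign u)(u')^2$. Plugging in,
\begin{align*}
\int_0^1 u''\,\overline{v''}\,dx&=\int_0^1\Bigl(2|u|\,|u''|^2+2(\sign u)\,u''\,(u')^2\Bigr)dx.
\end{align*}
The first term is manifestly nonnegative; for the second I would integrate by parts, using $(\sign u)(u')^3$ as a primitive-candidate: $\frac{d}{dx}\bigl[(\sign u)(u')^3\bigr]=3(\sign u)(u')^2u''$ a.e. (the $\sign u$ contributes a nonnegative Dirac-type term at zeros of $u$ which only helps, or is absent since $v\in H^2$ forces enough regularity). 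Thus $\int_0^1 2(\sign u)u''(u')^2\,dx=\frac{2}{3}\bigl[(\sign u)(u')^3\bigr]_0^1+(\text{nonnegative boundary-of-zero-set terms})$, and the boundary term vanishes because $u'(0)=u'(1)=0$. Therefore $\Real\f(u,|u|u)\ge 2\int_0^1|u|\,|u''|^2\,dx\ge 0$, which is~\eqref{eq:carcontr-2} for $p=3$.

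The main obstacle I anticipate is the regularity bookkeeping: making rigorous that $f=P_{B^3}u$ genuinely lies in $H^2$ despite the non-smoothness of $s\mapsto|s|$ at the origin, and justifying the integration by parts in~\eqref{eq:carcontr-2} when $u$ has zeros (so that $|u|u''$ and $(\sign u)(u')^2u''$ need a careful almost-everywhere / distributional treatment, and one must argue the contribution of the set $\{u=0\}$ has the favourable sign). A clean way around part of this is to first prove the inequality for smooth $u$ and then pass to the limit using the continuity of $\f$ and density, but the projection condition~\eqref{eq:carcontr-1} still has to be checked on all of $D(\f)$; here I would lean on the fact, noted in the excerpt, that $D(\f)\hookrightarrow C^1$, which gives uniform control of $u$ and $u'$ and makes the chain-rule manipulations legitimate with $\phi\in C^1$ having Lipschitz derivative away from (and mild behaviour at) the range-point $0$. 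Once both conditions are in place, Nittka's theorem~\cite[Thm.~4.1]{Nit12} yields $L^3$-contractivity, duality yields $L^{3/2}$-contractivity, and Riesz--Thorin fills in the interval $[3/2,3]$.
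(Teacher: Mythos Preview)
Your proposal is correct and follows essentially the same route as the paper: reduce to $p=3$ by duality and Riesz--Thorin, verify Nittka's two conditions by (i) using the implicit relation~\eqref{projection} to write $f=\varphi_t^{-1}\circ u$ and check $H^2$-regularity plus preservation of the boundary constraints, and (ii) computing $\f(u,|u|u)$ and integrating by parts so that the Neumann condition kills the boundary term, leaving $2\int_0^1|u|\,|u''|^2\,dx\ge 0$. The paper is slightly more explicit on the regularity step (it cites~\cite[Thm.~4.3]{Nit12} for $f\in H^1$ and then checks $(\varphi_t^{-1})''\in L^\infty$ when $p=3$ to get $f\in H^2$), which is exactly the point you flag as the ``main obstacle''; it also invokes~\cite{GM94,HK05} to reduce to real scalars.
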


We remark that in the language of networks, both choices of $Y_1$ correspond to Friedrichs realization of the bi-Laplacian: either on a graph consisting of a single edge or on a loop, respectively.

\begin{proof}
It will suffice to prove the assertion for $p=3$, since the case $p=3/2$ follows by duality and all remaining cases by interpolation. Furthermore, we can assume without loss of generality that our ambient spaces $L^2(0,1),L^p(0,1)$ are real: indeed, it follows from the results in \cite{GM94} (see also \cite{HK05}) that the norm of $e^{-tA}$ as an operator on $L^p(0,1;\R)$ coincides with  the norm of $e^{-tA}$ as an operator on $L^p(0,1;\C)$.
To begin with, let us check condition~\eqref{eq:carcontr-1}, i.e., let us prove that the projection onto $B^p$ of a function $u\in H^2(0,1)$ is still twice weakly differentiable with $P_{B^p} u,P_{B^p} u',P_{B^p} u''\in L^2(0,1)$; and that the boundary conditions are still satisfied. 
Let $u\in H^2(0,1)\setminus B^p$ and denote by $f=P_{B^p}u$ its projection which satisfies $\Vert f\Vert_p=1$ and $f+t|f|^{p-2}f=u$ with $t:=(u-f,f)$. By ~\cite[Thm.~4.3]{Nit12}, we know that $f=\varphi_t^{-1}\circ u\in H^1(0,1)$
 where $\varphi_t(x)=x+t|x|^{p-2}x$, and $\varphi_t^{-1}\in W^{1,\infty}(0,1)$. In order to obtain $f\in H^2(0,1)$ we observe that \[D(D(\varphi_t^{-1}\circ u))=(\varphi_t^{-1})''(u)\cdot(u')^2+(\varphi_t^{-1})'(u)\cdot u''\]
which belongs to $L^2(0,1)$ since one can write $(\varphi_t^{-1})''(a)=-\frac{\varphi''_t(\varphi_t^{-1}(a))\cdot(\varphi_t^{-1})'(a)}{(\varphi'_t(\varphi_t^{-1}(a)))^2}$ and $\varphi''_t\in L^\infty(0,1)$ for $p=3$. Taking into account \eqref{projection}, if $u$ satisfies $\begin{pmatrix}u(0)\\ u(1) \end{pmatrix}\in Y_1$, for some $Y_1$ as in the statement, and $\begin{pmatrix}u'(0)\\ u'(1) \end{pmatrix}=\begin{pmatrix}0\\ 0 \end{pmatrix}$, then $f$ does too in view of $f'=D(\varphi_t^{-1}\circ u)=(\varphi_t^{-1})'(u)\cdot u'$.
Furthermore, we see that 
\begin{align*}
\f(u,|u|^{p-2}u)&=(p-1)\int_0^1 |u(x)|^{p-2}(u''(x))^2\,dx+(p-1)(p-2)\int_0^1|u(x)|^{p-3}\textrm{sign}\,u (u'(x))^2u''(x)\,dx\\
&=(p-1)\int_0^1 |u(x)|^{p-2}(u''(x))^2\,dx-\frac{(p-1)(p-2)(p-3)}{3}\int_0^1|u(x)|^{p-6}(u(x))^2(u'(x))^4\,dx\\
&\quad +\frac{(p-1)(p-2)}{3}\left[|u(x)|^{p-3}\textrm{sign}\, u (u'(x))^3\right]_0^1.
\end{align*}
Taking $p=3$ and the boundary conditions into account one has 
\begin{align*}
\f(u,|u|u)=2\int_0^1 |u(x)|(u''(x))^2\,dx\geq0 , 
\end{align*}
as we wanted to show.
\end{proof}

It is easy to see that the above proof also carries over verbatim to different boundary conditions on the trace of $u$, especially $Y_1=\C^2$ (no condition) and $Y_1=\langle(1,-1)\rangle$ (anti-periodic boundary conditions).

\subsection{$L^p$-contractivity of the semigroup generated by the bi-Laplacian on a network}\label{sec:lp-contr}

Let us extend Lemma~\ref{lem:p-contr} to the case of a general network; this shows that a counterpart of Proposition~\ref{prop:ellp-discr} holds in the continuous case, too.

\begin{prop}
The semigroup generated by the Friedrichs realization of the bi-Laplacian (see Example~\ref{ex-nickur})
is $L^p(\mathcal G)$-contractive for all $p\in [3/2,3]$.
\end{prop}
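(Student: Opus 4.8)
The plan is to mimic the proof of Lemma~\ref{lem:p-contr}, applying Nittka's criterion (conditions~\eqref{eq:carcontr-1}--\eqref{eq:carcontr-2}) edgewise, while keeping track of the vertex conditions~\eqref{condNic-friedr} of the Friedrichs realization. By duality and interpolation it again suffices to treat the case $p=3$, and by the results of~\cite{GM94,HK05} we may work on real $L^p$-spaces. Recall that for the Friedrichs realization the form is $\f(u)=\sum_{\me\in\mE}\int_0^{\ell_\me}|u_\me''|^2\,dx$ with domain $\tilde H^2_Y(\mathcal G)$ for $Y=\cmv\times\{0_{\C^{2E}}\}$, i.e., functions that are continuous in the vertices and whose first derivatives vanish at every vertex.

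First I would verify~\eqref{eq:carcontr-1}. Given $u\in D(\f)$ with $\|u\|_p>1$, its projection $f=P_{B^3}u$ satisfies $f=\varphi_t^{-1}\circ u$ with $\varphi_t(x)=x+t|x|x$ and $t=(u-f,f)$, exactly as in Lemma~\ref{lem:p-contr}; since $\varphi_t^{-1}$ acts pointwise and is the same scalar function on every edge, the edgewise computation from the single-interval case shows $f_\me\in H^2(0,\ell_\me)$ for each $\me$, so $f\in\h^2(\mathcal G)$. The vertex conditions are then preserved for two separate reasons: continuity in the vertices is preserved because $\varphi_t^{-1}$ is a fixed continuous function, so if $u_\me(\mv)=u_\mf(\mv)$ then $f_\me(\mv)=\varphi_t^{-1}(u_\me(\mv))=\varphi_t^{-1}(u_\mf(\mv))=f_\mf(\mv)$; and the first derivatives still vanish at the vertices because $f_\me'=(\varphi_t^{-1})'(u_\me)\cdot u_\me'$, which is zero wherever $u_\me'$ is. Hence $P_{B^3}u\in D(\f)$.

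Next I would check~\eqref{eq:carcontr-2}. Summing the edgewise identity already derived in the proof of Lemma~\ref{lem:p-contr} gives, for $p=3$,
\[
\f(u,|u|u)=2\sum_{\me\in\mE}\int_0^{\ell_\me}|u_\me(x)|\,(u_\me''(x))^2\,dx+\frac{2}{3}\sum_{\me\in\mE}\Big[|u_\me(x)|\operatorname{sign}u_\me\,(u_\me'(x))^3\Big]_0^{\ell_\me},
\]
where the quartic term $\int|u|^{p-6}u^2(u')^4$ drops out because its coefficient $(p-1)(p-2)(p-3)/3$ vanishes at $p=3$. The boundary terms vanish edge by edge since $u_\me'(0)=u_\me'(\ell_\me)=0$ for every edge by the Friedrichs vertex condition (the exterior first derivatives vanish at each vertex, hence all one-sided first derivatives at endpoints vanish). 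What remains, $2\sum_\me\int_0^{\ell_\me}|u_\me|(u_\me'')^2\,dx$, is manifestly nonnegative, so~\eqref{eq:carcontr-2} holds and the semigroup is $L^p(\mathcal G)$-contractive for $p=3$, whence for all $p\in[3/2,3]$.

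I do not expect a genuine obstacle here: the only point needing care is confirming that Nittka's criterion, and the implicit description~\eqref{projection} of $P_{B^p}$, carry over to the direct-sum setting $L^2(\mathcal G)=\bigoplus_\me L^2(0,\ell_\me)$, but since $P_{B^p}$ is a pointwise (Nemytskii-type) operator it commutes with the edge decomposition, and the vertex conditions in~\eqref{condNic-friedr} are exactly the ones that survive the nonlinear substitution $\varphi_t^{-1}$ — continuity, because $\varphi_t^{-1}$ is a fixed homeomorphism of $\R$, and vanishing first derivatives, because they are multiplied by $(\varphi_t^{-1})'(u)$. The mild bookkeeping is just keeping all the boundary terms indexed correctly over $\mE$ and $\mV$; the analytic content is entirely inherited from Lemma~\ref{lem:p-contr}.
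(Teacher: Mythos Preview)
Your proof is correct and follows essentially the same route as the paper's: both reduce to $p=3$ via duality and interpolation, both verify Nittka's two conditions, and both arrive at the same nonnegative integral $2\sum_\me\int_0^{\ell_\me}|u_\me|(u_\me'')^2\,dx$ after the boundary terms are killed by the vanishing first derivatives. The one noteworthy difference is in how condition~\eqref{eq:carcontr-1} is handled for the vertex conditions: you argue directly that $\varphi_t^{-1}$, being a fixed continuous function of one real variable, preserves continuity at vertices and that the chain rule $f_\me'=(\varphi_t^{-1})'(u_\me)\,u_\me'$ preserves the vanishing of first derivatives; the paper instead invokes the abstract equivalence from~\cite[Lemma~2.3]{ManVogVoi05}, reducing the question to the invariance of the finite-dimensional ball under the projector $P_Y$. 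Your hands-on verification is more transparent for this particular $Y=\cmv\times\{0\}$, while the paper's abstract criterion would scale more readily to other choices of $Y$; either way the analytic core is identical.
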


\begin{proof}
We are going again to check Nittka's conditions \eqref{eq:carcontr-1} and \eqref{eq:carcontr-2} for $p=3$.

To begin with, we observe that $P_{B^p}\left(\bigoplus_{\me\in \mE} H^2(0,\ell_\me)\right)\subset \bigoplus_{\me\in \mE} H^2(0,\ell_\me)$ can be proved precisely as in Lemma~\ref{lem:p-contr}. It remains to check that $P_{B^p}$ respects the boundary condition, or more generally that
\[
P_{B^p}{\bf y}\in Y\quad\hbox{whenever}\quad {\bf y}\in Y.
\]
By~\cite[Lemma~2.3]{ManVogVoi05}, this is equivalent to asking that $P_Y B^p\subset B^p$, where $P_Y$ denotes the orthogonal projector of $\C^{4\mE}$ onto $Y$: but in the case of the Friedrichs realization, \[
P_Y=\begin{pmatrix}
\lineskip=0pt
P_{\langle c_\mV \rangle} &\linie & 0\\ 
\noalign{\hrule}
0 &\linie & 0
\end{pmatrix}
\]
and it is well-known that $P_{\cmv}$ leaves $B^p$ invariant (e.g., as a consequence of the Markovian property of the semigroup generated by the Laplacian with continuity and Kirchhoff vertex conditions on $\mathcal G$; or else, more directly, because $P_{\cmv}$ is easily seen to leave invariant both $B^2$ and $B^\infty$).
This takes care of~\eqref{eq:carcontr-1}.

Let us now turn to~\eqref{eq:carcontr-2}: we reason as above and compute
\begin{align*}
\f(u,|u|^{p-2}u)&=(p-1)\sum\limits_{\me\in\mE}\int_0^{\ell_\me} |u_\me(x)|^{p-2}(u_\me''(x))^2\,dx\\&\quad-\frac{(p-1)(p-2)(p-3)}{3}\sum\limits_{\me\in\mE}\int_0^{\ell_\me}|u_\me(x)|^{p-6}(u_\me(x))^2(u_\me'(x))^4\,dx\\&\quad
 +\frac{(p-1)(p-2)}{3}\sum\limits_{\me\in\mE}\left[|u_\me(x)|^{p-3}\sign\, u_\me (u_\me')^3\right]_0^1.
\end{align*}
For $p=3$ and owing to the Neumann boundary conditions satisfied by the functions in the domain of the Friedrichs realization we find
\[\f(u,|u|u)=2\sum\limits_{\me\in\mE}\int_0^{\ell_\me}|u_\me(x)|(u_\me''(x))^2\,dx
\]
which is clearly non-negative.
\end{proof}

We already know that these semigroups are never $L^\infty$-contractive (in the continuous case), resp.\ if and only if $\mG$ is complete (in the discrete case).  We leave it as an open problem to find the precise $p_0$ at which  transition from $L^p$-contractivity to $L^p$-\textit{non}-contractivity occurs.

\section{Positivity and Markovianity issues}\label{sec:event-pos}

This section is devoted to the investigation of whether the non-positive semigroup generated by bi-Laplacians on graphs and networks can actually become positive for large time, a phenomenon known as \textit{eventual positivity}. We refer the reader to Section~\ref{sec:app} for a reminder of this theory, including basic definitions and the main results we are going to exploit in the following.

\subsection{Eventual positivity}

Recall that by Proposition~\ref{prop-complete} the semigroup generated by the discrete bi-Laplacian is not positive unless the graph is complete. In the non-complete case, this information can be complemented as follows.

\begin{prop}\label{prop:gluthesis}
Let $\mG$ be a finite, connected graph. The semigroup $(e^{-t\mathcal L^2})_{t\ge 0}$ is eventually irreducible.
\end{prop}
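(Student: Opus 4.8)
The plan is to show that $(e^{-t\mathcal L^2})_{t\ge 0}$ is eventually irreducible by diagonalizing $\mathcal L$ (and hence $\mathcal L^2$) and tracking the spectral behaviour of the resulting finite-dimensional semigroup. Since $\mathcal G$ is finite and connected, $\mathcal L$ is a symmetric positive semidefinite matrix with $0$ as a simple eigenvalue, eigenvector $\frac{1}{\sqrt V}\mathbf 1$, and strictly positive remaining eigenvalues $0<\mu_2\le\cdots\le\mu_V$. Consequently $\mathcal L^2$ has the same eigenvectors with eigenvalues $0<\mu_2^2\le\cdots\le\mu_V^2$, so
\[
e^{-t\mathcal L^2}=P_{\mathbf 1}+\sum_{k=2}^{V}e^{-\mu_k^2 t}P_k,
\]
where $P_{\mathbf 1}=\frac1V\mathbf 1\mathbf 1^T$ is the rank-one projector onto $\langle\mathbf 1\rangle$ and the $P_k$ are the remaining spectral projectors. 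First I would observe that as $t\to\infty$ the entries of $e^{-t\mathcal L^2}$ converge to those of $P_{\mathbf 1}$, all of which equal $\frac1V>0$. Hence there exists $t_0>0$ such that every entry $(e^{-t\mathcal L^2})_{\mv\mw}$ is strictly positive for all $t\ge t_0$, which is precisely the statement that the operator is positivity improving, and a fortiori irreducible, for $t\ge t_0$. That gives eventual irreducibility.

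A slightly more careful version of the same argument, which I would actually write down, avoids even invoking the explicit limit: for each fixed $t$ one has $e^{t\mathcal L^2}e^{-t\mathcal L^2}=\Id$, so $e^{-t\mathcal L^2}$ is invertible, hence it has no invariant coordinate subspace that is also invariant for its inverse; but more to the point, one simply notes $(e^{-t\mathcal L^2})_{\mv\mw}=\frac1V+\sum_{k\ge2}e^{-\mu_k^2t}(P_k)_{\mv\mw}$, and since the sum is bounded in modulus by $\big(\sum_k\|P_k\|\big)e^{-\mu_2^2 t}\to 0$, there is a uniform $t_0$ after which the $\frac1V$ term dominates and all entries are positive. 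This is the step where I would be careful to make the estimate uniform in the pair $(\mv,\mw)$, which is immediate since there are only finitely many such pairs and finitely many projectors.

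The potential subtlety — though here it is not much of an obstacle — is to confirm that ``eventual irreducibility'' in the sense used in the paper's Appendix is implied by what I have shown. For a positivity-improving single operator $T$ (all matrix entries strictly positive) irreducibility is classical: no nontrivial ideal $\{f:f|_S=0\}$ of $\ell^p(\mV)$ can be invariant, since applying $T$ to a nonnegative nonzero function supported on $S$ produces a strictly positive function. One should then check whether the definition requires irreducibility of the individual operators $e^{-t\mathcal L^2}$ for large $t$, or of the semigroup as a whole; in the finite-dimensional setting these coincide, because if every $e^{-t\mathcal L^2}$ with $t\ge t_0$ is positivity improving then certainly no common nontrivial closed ideal is invariant. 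So the main work is really just the spectral decomposition and the elementary $t\to\infty$ limit; there is no genuinely hard part, and the proof will be short. I would also remark that this should be stated as a consequence of the general eventual-positivity machinery recalled in the Appendix — $\mathcal L^2$ is self-adjoint, its leading eigenvalue $0$ is simple with a strictly positive eigenvector $\mathbf 1$ — which is exactly the standard criterion guaranteeing that the semigroup is eventually positivity improving, hence eventually irreducible.
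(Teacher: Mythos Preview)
Your argument is correct, and it coincides with the paper's approach: the paper's proof is the single line ``Apply Corollary~\ref{cor:indivunif}.(2)'', i.e., exactly the abstract criterion you invoke in your final paragraph (self-adjoint generator, spectral bound $0$ simple with strictly positive eigenvector $\mathbf 1$). Your explicit spectral-decomposition computation is just the elementary finite-dimensional unpacking of that criterion, so there is no genuine difference in route.
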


\begin{proof}
Apply Corollary~\ref{cor:indivunif}.(2).
\end{proof}

\begin{rem}\label{rem:eventposspec}
1) The spectral theorem suggests that the smallest transition time $t_{\max}$ from positivity to non-positivity is related to the lowest non-zero eigenvalue. Numerical simulations support the conjecture that such $t_{\max}$ is attained when applying the semigroup to characteristic functions of the vertices of minimal degree. In the case of the path graph on 3 vertices, taking $(1,0,0)$ as initial condition leads to the solution
\[
u(t)=\begin{pmatrix}
\frac{1}{3}+\frac{1}{6}e^{-9t}+\frac{1}{2}e^{-t}\\
\frac{1}{3}-\frac{1}{3}e^{-9t}\\
\frac{1}{3}+\frac{1}{6}e^{-9t}-\frac{1}{2}e^{-t}
\end{pmatrix},\quad t\ge 0.
\]

2) It can be shown by a Galerkin scheme similar to that in~\cite{Mug13} that the solution of the parabolic equation associated with $-\mathcal L^2$ on an infinite graph $\mG$ can be approximated by solving the analogous parabolic equation on a sequence $(\mG_n)_{n\in \mathbb N}$ of graphs exhausting $\mG$: while each semigroup on $\mG_n$ is eventually irreducible, the transition time may blow up as $n\to \infty$; we leave this as an open problem.
\end{rem}

As already mentioned in the introduction, the parabolic problem associated with the (differential) bi-Laplacian is not governed by a positive semigroup. Indeed, the following holds.

\begin{lemma}
Let $L$ be a self-adjoint, uniformly elliptic operator of order $2m$ on $L^2(\Omega)$ with $\Omega\subset\R$. Then a necessary condition for the semigroup $(e^{-tL})_{t\ge 0}$ generated by $-L$ to be positive is that $m=1$. 
\end{lemma}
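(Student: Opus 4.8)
The plan is to reduce the problem to a contradiction with the oscillatory behaviour that an operator of order $2m$ with $m>1$ necessarily exhibits, using the spectral theorem. Concretely, assume $m\ge 2$ and that $-L$ generates a positive semigroup on $L^2(\Omega)$. Since $L$ is self-adjoint, uniformly elliptic of order $2m$ and bounded below, its spectrum is discrete (on bounded $\Omega$) or at least bounded below, and we may assume after shifting that $L\ge 0$. A positive $C_0$-semigroup whose generator is self-adjoint is automatically a symmetric sub-Markovian-type object in the sense that $e^{-tL}$ maps real functions to real functions and the cone $L^2(\Omega)_+$ is invariant; by Ouhabaz' criterion this forces the associated form to be a Dirichlet-type form, i.e.\ the form domain must be stable under the truncations $u\mapsto (0\vee u)\wedge 1$. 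The key obstruction is exactly the one already flagged in the paper for the fourth-order case just above this lemma: for a form built from $\int |u^{(m)}|^2$ with $m\ge 2$, applying such a truncation to $u\in D(\mathfrak a)$ produces a function whose $m$-th derivative is not even locally integrable in general (the first derivative of the truncation already has jumps), so $(0\vee u)\wedge 1\notin D(\mathfrak a)$ and the Beurling--Deny criterion fails.

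First I would make the ellipticity and self-adjointness hypotheses precise enough to write $L$ via a closed, symmetric, lower-bounded form $\mathfrak a$ with principal part comparable to $\int_\Omega |u^{(m)}|^2\,dx$, so that $D(\mathfrak a)$ is (a closed subspace of) $H^m(\Omega)$ determined by the chosen boundary/transmission conditions. Then I would invoke the characterization of positivity of symmetric semigroups: $(e^{-tL})_{t\ge0}$ is positive (equivalently, leaves $L^2(\Omega)_+$ invariant) if and only if $\mathfrak a$ is a Dirichlet form, which by the Beurling--Deny criteria requires $u\in D(\mathfrak a)\implies (0\vee u)\wedge 1\in D(\mathfrak a)$ and $\mathfrak a\big((0\vee u)\wedge 1\big)\le \mathfrak a(u)$. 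The third step is the core computation: pick a suitable test function $u\in D(\mathfrak a)\subset H^m(\Omega)$ with $m\ge 2$ (for instance a smooth function that climbs from $0$ through $1$ and back, so the truncation $(0\vee u)\wedge 1$ is constant on two subintervals and agrees with $u$ on a middle strip, hence has a genuine corner at the gluing points) and observe that $(0\vee u)\wedge 1$ has a derivative with a jump discontinuity, so its second (hence $m$-th, $m\ge2$) distributional derivative contains a Dirac mass and cannot lie in $L^2$; therefore $(0\vee u)\wedge 1\notin H^m(\Omega)\supset D(\mathfrak a)$, contradicting the Dirichlet-form property. One has to be mildly careful to choose $u$ compatible with whatever boundary conditions define $D(\mathfrak a)$ — this is always possible since such conditions involve only finitely many pointwise data and the construction can be localized in the interior of $\Omega$, away from the boundary.

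I expect the main obstacle to be the bookkeeping needed to cover \emph{all} self-adjoint uniformly elliptic realizations at once: the form $\mathfrak a$ need not be exactly $\int |u^{(m)}|^2$, and its domain may incorporate non-local or coupled boundary conditions, so I must argue that (i) the principal part still controls $\|u^{(m)}\|_{L^2}^2$ from below modulo lower-order terms (uniform ellipticity), and (ii) the interior-localized truncation trick still applies, since positivity of the semigroup is a local obstruction. A clean way around (i) is to note that positivity of $e^{-tL}$ is equivalent to positivity of $e^{-t(L+c)}$ for $c\in\R$, and that domination/positivity passes to the form's principal part once lower-order terms are handled as relatively form-bounded perturbations; alternatively one can cite Ouhabaz' invariance-of-closed-convex-set criterion directly and only ever test it on interior bump functions, which never see the boundary conditions. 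The case $m=1$ is of course not excluded because then the truncation $(0\vee u)\wedge 1\in H^1(\Omega)$ and $\mathfrak a$ genuinely can be a Dirichlet form, consistent with the classical positivity of second-order heat semigroups; so the dichotomy in the statement is sharp, and no further argument is needed for the ``only if'' direction beyond noting $m=1$ is allowed.
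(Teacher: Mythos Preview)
Your approach is essentially the same as the paper's: invoke the Beurling--Deny/Ouhabaz invariance criterion for the associated symmetric form and exhibit a function in $D(\mathfrak a)\subset H^m(\Omega)$ whose truncation falls out of $H^m$ when $m\ge 2$. The paper's execution is shorter: it uses the \emph{positivity} criterion directly (stability of $D(\mathfrak a)$ under $u\mapsto u^+$), and as a test function simply takes a linear function that changes sign in $\Omega$, so that $u^+$ has a corner and $u^+\notin H^2$.

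There is one genuine slip in your write-up: you state that positivity of $(e^{-tL})_{t\ge 0}$ is equivalent to $\mathfrak a$ being a Dirichlet form, and then test stability under $u\mapsto (0\vee u)\wedge 1$. That characterization is the \emph{sub-Markovian} one (positivity \emph{and} $L^\infty$-contractivity), not positivity alone. The first Beurling--Deny criterion for positivity is stability under $u\mapsto u^+$ together with $\mathfrak a(u^+,u^-)\le 0$; see Ouhabaz' Theorem~2.6. The fix is immediate: replace your truncation $(0\vee u)\wedge 1$ by $u^+$ and choose a test function that changes sign in the interior (your own bump works, as does the paper's linear function). Your remarks about localizing the construction away from the boundary and about lower-order terms are more careful than what the paper writes down, and they are correct; but once you use $u^+$, the argument goes through exactly as you outline.
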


\begin{proof}
Let $\f$ be the quadratic form associated with $L$. Because $L$ is self-adjoint, it satisfies the square root property and hence the form domain is the Sobolev space $H^m(\Omega)$. Now, by the Beurling--Deny criteria a necessary condition for $(e^{-tL})_{t\ge 0}$ to be positive is that $u^+\in H^m(\Omega)$ whenever $u\in H^m(\Omega)$. This is the case if and only if $m=1$. Indeed, by \cite[Proposition 4.4]{Ouh05} if $u\in H^1(\Omega)$ then $u^+\in H^1(\Omega)$. On the other hand, it suffices to consider a linear function which changes sign in $\Omega$ to see that it belongs to $H^k(\Omega)$ for every $k\in\N$, but $u^+\notin H^2(\Omega)$ and then in any $H^k(\Omega)$ for $k\geq2$.
\end{proof}

\begin{ex}
With the terminology presented in the introduction, $-\frac{d^4}{dx^4}$ generates a semigroup on $L^2(0,1)$ that in view of Corollary~\ref{cor:indivunif} is
\begin{itemize}
\item eventually irreducible provided hinged, clamped, or sliding conditions are imposed;
\item not even individually asymptotically positive provided free boundary conditions are imposed.
\end{itemize}
\end{ex}

In the case of general networks we are going to apply Corollary~\ref{cor:indivunif}.(2) in order to inquire eventual irreducibility of the semigroup generated by the bi-Laplacian. 
To begin with, we observe that every realization of the bi-Laplacian $A$ on a network of finite total length has compact resolvent (in fact, even trace-class resolvent by Proposition~\ref{prop:gener}), hence it has purely point spectrum. If $A$ is self-adjoint and hence its vertex conditions can be written as in~\eqref{condR}, then the spectrum of $-A$ is, of course, real; it is also nonnegative -- say, $0\le \lambda_1\le \lambda_2\le\ldots$ -- if additionally $R$ is dissipative or $R=\Lambda$ (as in the Krein--von Neumann extension).
The constant function ${\bf 1}$ is contained in $\ker A$ provided \eqref{cont}
holds; the associated eigenprojector is, of course, positive. Whether  the null space has  multiplicity $>1$ influences the behavior of the semigroup generated by the bi-Laplacian in several ways beyond the mere structure of stationary solutions of~\eqref{problem}. In order to establish results on the eventual irreducibility of the semigroup generated by different realizations of the bi-Laplacian we have then to study the multiplicity of the eigenvalue $0$ depending on the transmission conditions in the vertices. In most situations, this can be done by a straightforward computation.
Observe that when the realization of the bi-Laplacian can be expressed with  vertex conditions of type~\eqref{condR}  but  with $R=0$  each element of $\ker A$ is a harmonic, hence edgewise affine function, i.e., $u_\me(x)=a_\me x+b_\me$ for every $\me\in\mE$.

In the following we assume as usual $\mathcal G$ to be a finite, connected network on $V$ vertices and $E$ edges.

We are finally able to obtain the following results on eventual positivity depending on the the transmission conditions in the vertices and/or the network topology.

\begin{prop}\label{prop:event-friedkrein}
The semigroup generated by minus the Friedrichs extension $A_F$ of the bi-Laplacian $(A,D_{cont}(A))$ is eventually irreducible, and so is the semigroup generated by $-A$ with conditions~\eqref{condNic1}. The semigroup generated by minus the Krein--von Neumann extension $A_K$ of the bi-Laplacian $(A,D_{cont}(A))$ is not even individually asymptotically positive, nor is the semigroup generated by $-A$ with conditions \eqref{condNic3}.
\end{prop}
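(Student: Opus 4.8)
The plan is to deal separately with the two ``good'' realizations --- the Friedrichs extension $A_F$ and the one with conditions~\eqref{condNic1} --- and the two ``bad'' ones --- the Krein--von Neumann extension $A_K$ and the one with conditions~\eqref{condNic3} --- and in each case to reduce the statement to a computation of $\ker A$. For the good cases I would run exactly the argument behind Proposition~\ref{prop:gluthesis}. By Proposition~\ref{prop:gener} both $A_F$ and the realization with conditions~\eqref{condNic1} are self-adjoint, real and have trace-class (in particular compact) resolvent, hence purely discrete spectrum; moreover they are non-negative since $R=0$, so that $\f(u,u)=\sum_{\me\in\mE}\int_0^{\ell_\me}|u_\me''|^2\,dx\ge0$. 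It then suffices to verify that $0$ is a \emph{simple} eigenvalue with the strictly positive eigenfunction $\mathbf 1$ and to invoke Corollary~\ref{cor:indivunif}.(2) as in the proof of Proposition~\ref{prop:gluthesis}. Since $A_F\ge0$ is self-adjoint, $\ker A_F=\{u\in D(\f):\f(u,u)=0\}$, and $\f(u,u)=0$ forces $u$ to be edgewise affine; the Neumann-type condition $\frac{\partial u_\me}{\partial\nu}(\mv)=0$ then forces each $u_\me$ to be constant, and continuity in the vertices together with connectedness of $\mathcal G$ gives $\ker A_F=\langle\mathbf 1\rangle$. For conditions~\eqref{condNic1} the quickest route is the observation in Example~\ref{ex1} that the corresponding realization equals $\Delta_{CK}^2$; since $\Delta_{CK}\ge0$ is self-adjoint with $\ker\Delta_{CK}=\langle\mathbf 1\rangle$ on the connected graph $\mathcal G$, one gets $\ker(\Delta_{CK}^2)=\ker\Delta_{CK}=\langle\mathbf 1\rangle$, while $\sigma(\Delta_{CK}^2)=\{\mu^2:\mu\in\sigma(\Delta_{CK})\}$ shows that $0$ is a simple, isolated eigenvalue.

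For the bad cases I would use that $A_K$ and the realization with conditions~\eqref{condNic3} are self-adjoint, non-negative and have discrete spectrum, so that $e^{-tA}\to P_0$ strongly (indeed in operator norm) as $t\to\infty$, where $P_0$ is the \emph{orthogonal} projector onto $\ker A$; consequently $e^{-tA}f\to P_0f$ in $L^2(\mathcal G)$ for every $f$, and individual asymptotic positivity would force $(P_0f)^-=0$ for every $f\in L^2(\mathcal G)_+$. First I would compute the kernels. For conditions~\eqref{condNic3} ($R=0$) the argument above shows that $\ker A$ consists precisely of the edgewise affine functions that are continuous in the vertices, the remaining conditions in~\eqref{condNic3} being automatic for affine functions. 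For $A_K$ one uses the construction of the non-local matrix $\Lambda$: there $\f_K(u,u)=\sum_{\me\in\mE}\bigl(\int_0^{\ell_\me}|u_\me''|^2\,dx-\tfrac1{\ell_\me}|u_\me'(\ell_\me)-u_\me'(0)|^2\bigr)$, and equality in the Hölder inequality used to derive this holds on an edge exactly when $u_\me''$ is constant there, so $\ker A_K$ consists of the edgewise \emph{quadratic} functions that are continuous in the vertices. In both situations $\ker A$ contains, besides $\mathbf 1$, the function $w$ obtained by fixing an edge $\me_0=(\mv_0,\mv_1)$ --- with $\mv_0\ne\mv_1$, since $\mathcal G$ has no loops --- setting $w(\mv_0)=-1$, $w(\mv_1)=1$, $w\equiv0$ at all remaining vertices, and interpolating affinely along each edge: this $w$ is continuous and edgewise affine, hence lies in $\ker A$ in both cases, while $w$ restricted to $\me_0$ runs linearly from $-1$ to $1$ and thus has a transversal zero in the interior of $\me_0$, so that $w^+$ has a jump in its first weak derivative there and therefore $w^+\notin\h^2(\mathcal G)\supseteq\ker A$.

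The conclusion then follows from a short lattice argument showing that $P_0$ fails to be positive already on this datum. Assume for contradiction that $P_0w^+\ge0$ and $P_0w^-\ge0$. Since $w\in\ker A=\operatorname{Ran}P_0$ we have $w=P_0w=P_0w^+-P_0w^-$, whence
\[
\|w\|_2^2=\|P_0w^+\|_2^2+\|P_0w^-\|_2^2-2\,(w^+,P_0w^-)_{L^2(\mathcal G)}\le\|w^+\|_2^2+\|w^-\|_2^2=\|w\|_2^2,
\]
where we used $(w^+,P_0w^-)_{L^2(\mathcal G)}\ge0$ (as $w^+\ge0$ a.e.\ and $P_0w^-\ge0$) and that $P_0$ is a contraction; equality throughout forces $\|P_0w^\pm\|_2=\|w^\pm\|_2$, i.e.\ $w^\pm\in\operatorname{Ran}P_0=\ker A$, contradicting $w^+\notin\ker A$. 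Hence, after replacing $w$ by $-w$ if necessary, $f:=w^+\in L^2(\mathcal G)_+$ satisfies $(P_0f)^-\ne0$, and since $e^{-tA}f\to P_0f$ in $L^2(\mathcal G)$ we obtain $\operatorname{dist}\bigl(e^{-tA}f,L^2(\mathcal G)_+\bigr)\to\|(P_0f)^-\|_2>0$; thus the semigroup is not individually asymptotically positive. I expect the main obstacle to be pinning down the kernels correctly --- especially for $A_K$, where one must track the equality case in the Hölder estimate behind $\Lambda$ --- and verifying that the constructed $w$ genuinely has $w^+\notin\h^2(\mathcal G)$; the remaining ingredients (discreteness of the spectrum, convergence to $P_0$, and the criterion of Corollary~\ref{cor:indivunif}) are standard or already established above.
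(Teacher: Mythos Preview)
Your proof is correct and follows essentially the same approach as the paper: compute $\ker A$ in each case and appeal to the eventual-positivity criteria (Corollary~\ref{cor:indivunif}). For the Friedrichs extension and for conditions~\eqref{condNic1} you argue exactly as the paper does (the latter via the identification with $\Delta_{CK}^2$, which the paper mentions elsewhere but does not spell out in its proof).

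The one place where you go beyond the paper is the negative direction. The paper simply computes $\dim\ker A_K=E+V$ and $\dim\ker A=V$ for conditions~\eqref{condNic3} and then invokes Corollary~\ref{cor:indivunif}, leaving implicit why a multi-dimensional kernel forces the spectral projection $P_0$ to be non-positive. You make this step explicit: you exhibit a concrete sign-changing element $w\in\ker A$ and give a clean lattice argument showing that positivity of the orthogonal projection $P_0$ would force $w^\pm\in\ker A$, contradicting $w^+\notin\h^2(\mathcal G)$. This is essentially the classical fact that the range of a positive orthogonal projection on $L^2$ is a sublattice, and it is a genuine improvement in rigor over the paper's terse treatment. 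Your identification of $\ker A_K$ via the equality case in the H\"older estimate behind $\Lambda$ is also correct and matches the paper's dimension count ($3E-(2E-V)=E+V$).
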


\begin{proof}
We are going to deduce these assertions from Corollary~\ref{cor:indivunif}.
The first assertion follows by observing that the null space of the Friedrichs extension only contains functions that are constant over the whole network.

On the other hand, the null space of the Krein extension consists of functions that are continuous in the vertices, edgewise polynomials of degree at most 3, and in fact of degree at most 2 in view of the non-local boundary condition in~\eqref{condNic-krein}: this yields $\sum_{\mv\in\mV}(\deg(\mv)-1)=2E-V$ and $E$ constraints, respectively. Hence, the space of bi-harmonic functions satisfying conditions~\eqref{condNic-krein} is a space of dimension $4E-(2E-V)-E$.
Hence the spectral bound $0$ is an eigenvalue of multiplicity $E+V$ of the Krein--von Neumann extension. 

In the case of conditions~\eqref{condNic3}, only continuity on the trace but no conditions on the first derivative are imposed in the vertices, hence all polynomials of degree $\le 1$ whose coefficients realize the continuity condition lie in the null space. (We could also invoke \cite[Theorem 3.2]{DekNic00} for this remark, since it is proved that in this case $\dim\ker A=V$.) 
\end{proof}

If we specialize the graph to the case of cycles we can obtain the following.

\begin{prop}\label{prop:oddc}
Let $\mathcal G$ be a cycle. Then the semigroup generated by the operator $- A$ with conditions~\eqref{condNic2} in all vertices is
\begin{itemize}
\item  eventually irreducible if $\mathcal G$ has an odd number of edges;
\item not individually asymptotically positive if $\mathcal G$ has an even number of edges.
\end{itemize}  
\end{prop}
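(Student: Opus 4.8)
The plan is to extract the spectral data of $A$ near the bottom of its spectrum and to invoke Corollary~\ref{cor:indivunif}, proceeding exactly as in the proof of Proposition~\ref{prop:event-friedkrein}. By Example~\ref{ex2}, conditions~\eqref{condNic2} are of the form~\eqref{condR} with $Y=\cmv\times\cmv$ and $R=0$; hence, by Theorem~\ref{thm:forma}, the form associated with $A$ is the nonnegative form $\f(u)=\sum_{\me\in\mE}\int_0^{\ell_\me}|u_\me''|^2\,dx$ on $\h^2_Y(\mathcal G)$, and by Proposition~\ref{prop:gener} the generator $-A$ is self-adjoint, the semigroup is analytic, and the resolvent is trace-class; in particular $\lambda_1:=\min\sigma(A)=0$ is an isolated eigenvalue of finite multiplicity, and the regularity hypotheses of Corollary~\ref{cor:indivunif} hold. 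Since $R=0$ we have $\ker(A-\lambda_1)=\ker A=\ker\f$, i.e.\ $\ker A$ is precisely the space of edgewise affine functions that are continuous in the vertices and whose exterior normal derivatives coincide at each vertex.

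The decisive step is to determine $\ker A$ when $\mathcal G$ is a cycle. I would orient $\mathcal G$ consistently around the cycle, label the edges $\me_1,\dots,\me_E$ (so $V=E$), and write $u_{\me_i}(x)=\alpha_i x+\beta_i$. Continuity at the vertices reads $\alpha_i\ell_{\me_i}+\beta_i=\beta_{i+1}$, while equality of the two exterior normal derivatives at the vertex between $\me_i$ and $\me_{i+1}$ forces $\alpha_{i+1}=-\alpha_i$, so that $\alpha_i=(-1)^{i-1}\alpha_1$; closing the cycle then imposes $\bigl(1-(-1)^{E}\bigr)\alpha_1=0$. If $E$ is odd this gives $\alpha_1=0$, whence every element of $\ker A$ is constant and $\ker A=\langle{\bf 1}\rangle$ is one-dimensional. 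If $E$ is even the slope $\alpha_1$ may be taken nonzero (consistency of the remaining relations $\beta_{E+1}=\beta_1$ amounts to $\sum_i(-1)^{i-1}\ell_{\me_i}=0$, which holds in particular when the cycle is equilateral), and then $\ker A$ is two-dimensional, spanned by ${\bf 1}$ and a non-constant, edgewise affine ``sawtooth'' function $w$; subtracting its mean we may assume $\int_{\mathcal G}w=0$, and a nonzero function with vanishing mean necessarily changes sign on $\mathcal G$.

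With this the two assertions follow from Corollary~\ref{cor:indivunif}. For $E$ odd, $\lambda_1=0$ is a simple eigenvalue with strictly positive eigenfunction ${\bf 1}$, so the semigroup is eventually irreducible. For $E$ even, $\lambda_1=0$ has multiplicity two and its eigenspace contains the sign-changing function $w$, so the spectral projector onto $\ker(A-\lambda_1)$ fails to be positive; hence the necessary condition for individual asymptotic positivity is violated and $(e^{-tA})_{t\ge0}$ is not individually asymptotically positive.

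I expect the bookkeeping in the even case to be the main obstacle: one must be sure that the linear system coming from continuity really admits a non-constant solution around the cycle, and this requires the geometric identity $\sum_i(-1)^{i-1}\ell_{\me_i}=0$ on the edge lengths — trivially true for equilateral cycles, and the natural hypothesis under which the statement is meant. A smaller point is to confirm that the domain/regularity assumptions behind Corollary~\ref{cor:indivunif}, which follow from the trace-class resolvent and the smoothing of $(e^{-tA})_{t\ge0}$ recorded in Proposition~\ref{prop:gener}, are indeed in force, so that multiplicity two of the bottom eigenvalue genuinely rules out individual asymptotic positivity.
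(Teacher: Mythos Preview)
Your argument is essentially the paper's: identify $\ker A$ with the edgewise affine functions in $\h^2_Y(\mathcal G)$, show this space is one-dimensional for odd $E$ and (at least) two-dimensional for even $E$, and invoke Corollary~\ref{cor:indivunif}. The only cosmetic difference is in the even case: rather than your consistent orientation and recurrence $\alpha_{i+1}=-\alpha_i$, the paper orients the cycle so that every vertex has either two incoming or two outgoing edges (possible precisely when $E$ is even), and then simply writes down the second null-vector explicitly on an equilateral cycle as $\psi=(x-\tfrac12,\dots,x-\tfrac12)$, asserting that ``the general case follows by rescaling the above function $\psi$''.

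Your caveat about the constraint $\sum_i(-1)^{i-1}\ell_{\me_i}=0$ is well taken and not a defect of your argument: it is exactly the condition under which the paper's rescaling produces a non-constant element of $\ker A$, and without it the null space is one-dimensional also for even $E$. So the paper's proof has the same limitation you identified; you have simply been more explicit about it.
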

\begin{proof}
Suppose $\mathcal{G}$ has $k$ edges and consider a function $u$ in the null space of $A$. Observe that it satisfies $\f(u,u)=0$: thus it is harmonic, hence affine.

Let us first assume $k$ to be odd. Then, imposing continuity condition on the normal derivatives of $u$ in all vertices implies that $u$ is edgewise constant along the cycle. Furthermore, the first vertex condition in~\eqref{condNic2} implies that the function $u$ is exactly the same constant on the graph. Then, the assertion follows from Corollary~\ref{cor:indivunif}. 

Let now $k$ be even. Let us first consider the case when all edges  have length 1. Also, we can assume the edges to be oriented in such a way that each vertex is incident with either two incoming or two outgoing edges.
Then the mutually orthogonal functions
\[
\phi:=\begin{pmatrix}
1\\ \vdots \\ 1
\end{pmatrix},\quad 
\psi:=\begin{pmatrix}
x-\frac12\\ \vdots \\ x-\frac12
\end{pmatrix},
\]
both lie in the null space of the bi-Laplacian  with conditions~\eqref{condNic2}; accordingly, by Corollary~\ref{cor:indivunif} the associated semigroup is not individually asymptotically positive. The general case follows by rescaling the above function $\psi$.
\end{proof}


\subsection{Eventual $L^\infty$-contractivity}\label{sec:eventinfty}

We already know that neither the semigroup generated by minus the discrete bi-Laplacian $-\mathcal L^2$ nor by minus the fourth derivative on an interval (regardless of the boundary conditions) are $L^\infty$-contractive, i.e., they do not necessarily map initial data in $B^\infty_1(0)$ to solutions that lie in $B^\infty_1(0)$ for \textit{all} $t>0$.

Let us consider discrete Laplacian $\mathcal L$ on a finite graph $\mG$: by Proposition~\ref{prop:gluthesis} the semigroup $(e^{-t\mathcal L^2})_{t\ge 0}$ is eventually irreducible.
We already know that $\bf 1$ spans the null space of $\mathcal L^2$ and conclude the following.

\begin{prop}\label{prop:eventlinftydiscr}
The semigroup $(e^{-t\mathcal L^2})_{t\ge 0}$ is eventually $\ell^\infty$-contractive, hence eventually sub-Markov\-ian.
\end{prop}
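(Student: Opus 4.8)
The plan is to show that $(e^{-t\mathcal L^2})_{t\ge 0}$ is eventually $\ell^\infty$-contractive, i.e.\ that $\|e^{-t\mathcal L^2}\|_{\ell^\infty(\mV)\to\ell^\infty(\mV)}\le 1$ for all $t$ beyond some threshold; since the semigroup is eventually positive by Proposition~\ref{prop:gluthesis}, this is precisely the assertion that it is eventually sub-Markovian (in fact, since it fixes ${\bf 1}$, eventually Markovian). The two facts I would use are already at hand: because $\mG$ is connected, ${\bf 1}$ spans $\ker\mathcal L^2$, so $e^{-t\mathcal L^2}{\bf 1}={\bf 1}$ for every $t\ge 0$; and by Proposition~\ref{prop:gluthesis} (via Corollary~\ref{cor:indivunif}) there is $t_0\ge 0$ such that $e^{-t\mathcal L^2}$ is a positive operator for \emph{all} $t\ge t_0$.

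The short route is then immediate: for any positive bounded operator $T$ on $\ell^\infty(\mV)$ one has $\|T\|_{\ell^\infty\to\ell^\infty}=\|T{\bf 1}\|_{\ell^\infty}$, since $|Tf|\le T|f|\le\|f\|_{\ell^\infty}\,T{\bf 1}$ pointwise. Applying this with $T=e^{-t\mathcal L^2}$ for $t\ge t_0$ and using $e^{-t\mathcal L^2}{\bf 1}={\bf 1}$ gives $\|e^{-t\mathcal L^2}\|_{\ell^\infty\to\ell^\infty}=1$, and together with positivity for $t\ge t_0$ this is eventual sub-Markovianity. I would also remark that this is a special case of the abstract eventual-Markovianity criterion discussed in Section~\ref{sec:app}.

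Should one prefer to obtain the $\ell^\infty$-bound without appealing to positivity, I would run the following spectral-gap argument instead. Write $e^{-t\mathcal L^2}=P+S(t)$, where $P$ is the orthogonal projector onto $\langle{\bf 1}\rangle$, i.e.\ $Pf=\tfrac1V\bigl(\sum_{\mv\in\mV}f(\mv)\bigr){\bf 1}$, and $S(t)=e^{-t\mathcal L^2}-P=(I-P)e^{-t\mathcal L^2}$, so that $S(t){\bf 1}=0$. By Proposition~\ref{prop:eigenv-discr}, $\|S(t)\|_{\ell^2\to\ell^2}\le e^{-\lambda_2 t}$ with $\lambda_2>0$ the spectral gap, hence $\|S(t)\|_{\ell^\infty\to\ell^\infty}\le c_V e^{-\lambda_2 t}$ for a purely dimensional constant $c_V$, by equivalence of norms on the finite-dimensional space $\ell^2(\mV)$. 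One then verifies the elementary slack estimate
\[
\|f-Pf\|_{\ell^\infty}\le (V-1)\bigl(1-\|Pf\|_{\ell^\infty}\bigr)\qquad\text{for all real }f\text{ with }\|f\|_{\ell^\infty}\le 1,
\]
which for $c:=\tfrac1V\sum_{\mv\in\mV}f(\mv)\ge 0$ follows from $f(\mv)-c\le 1-c$ and from $f(\mv)=Vc-\sum_{\mw\ne\mv}f(\mw)\ge Vc-(V-1)$. Since $Pf$ is a multiple of ${\bf 1}$, for $\|f\|_{\ell^\infty}\le 1$ we have $e^{-t\mathcal L^2}f=Pf+S(t)(f-Pf)$ and therefore
\[
\|e^{-t\mathcal L^2}f\|_{\ell^\infty}\le \|Pf\|_{\ell^\infty}+c_V e^{-\lambda_2 t}(V-1)\bigl(1-\|Pf\|_{\ell^\infty}\bigr)\le 1
\]
as soon as $c_V(V-1)e^{-\lambda_2 t}\le 1$, i.e.\ for all $t$ beyond an explicit threshold depending only on $\lambda_2$ and $V$; since the semigroup is eventually positive, eventual sub-Markovianity follows.

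The one point that requires care — and the reason the naive perturbation heuristic fails — is that the unit ball of $\ell^\infty(\mV)$ is not strictly convex and the limit projector $P$ carries its extreme functions $\pm{\bf 1}$ to the boundary, so that $\|P\|_{\ell^\infty\to\ell^\infty}=1$ and mere operator-norm convergence $e^{-t\mathcal L^2}\to P$ does \emph{not} yield $\|e^{-t\mathcal L^2}\|_{\ell^\infty\to\ell^\infty}\le 1$ for large $t$. What rescues the argument is the structural fact that the error $S(t)$ annihilates exactly those extreme functions — equivalently, in the short route, that $e^{-t\mathcal L^2}{\bf 1}={\bf 1}$ while positivity is available \emph{uniformly} in $t\ge t_0$ rather than merely individually; the slack estimate only quantifies this. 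Everything else is routine.
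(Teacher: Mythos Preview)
Your short route is exactly the paper's argument: the proposition is stated as an immediate consequence of eventual irreducibility (Proposition~\ref{prop:gluthesis}) together with ${\bf 1}$ spanning $\ker\mathcal L^2$, and the underlying mechanism is precisely the abstract criterion you cite from Section~\ref{sec:app} (Proposition~\ref{prop:glumug}), whose proof is the observation that a positive operator fixing ${\bf 1}$ has $\ell^\infty$-norm $1$.

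Your alternative spectral-gap argument is not in the paper and is a genuine addition. It trades the soft positivity input for the quantitative slack estimate $\|f-Pf\|_{\ell^\infty}\le (V-1)(1-\|Pf\|_{\ell^\infty})$, which lets you produce an explicit threshold $t\ge \lambda_2^{-1}\log\bigl(c_V(V-1)\bigr)$ for $\ell^\infty$-contractivity in terms of the spectral gap and $V$ alone; the paper's route gives no such bound. The cost is that the slack estimate is stated only for real $f$, so to conclude for the complex $\ell^\infty$-norm you should remark that $e^{-t\mathcal L^2}$ has real entries, whence its $\ell^\infty$-operator norm equals $\max_\mv\sum_\mw |(e^{-t\mathcal L^2})_{\mv\mw}|$ and is attained on real vectors. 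Your closing diagnosis---that mere norm convergence $e^{-t\mathcal L^2}\to P$ cannot yield the conclusion because $\|P\|_{\ell^\infty}=1$ and the $\ell^\infty$-ball is not strictly convex, so one must exploit $S(t){\bf 1}=0$---is a point the paper does not make explicit.
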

\begin{rem}
An interesting consequence of Proposition~\ref{prop:eventlinftydiscr} is that $(e^{-t\mathcal L^2})_{t\ge 0}$ is uniformly bounded on $\ell^p(\mV)$ for all $p\in [1,\infty]$: indeed, let us denote by $t_0>0$ a time such that $e^{-t\mathcal L^2}$ is $\ell^\infty$-contractive and set $M:=e^{t_0\|\mathcal L\|_\infty^2}$  an upper bound for $\sup_{t\in [0,t_0]}\|e^{-t\mathcal L^2}\|_\infty$. Combining these estimates we obtain that $\|e^{-t\mathcal L^2}\|_\infty\le M$ for all $t\ge 0$, hence by duality and Riesz--Thorin interpolation $\|e^{-t\mathcal L^2}\|_p\le M$ for all $p\in [1,\infty]$.
\end{rem}

\begin{ex}
Let us consider the bi-Laplacian $A$ on $L^2(0,1)$. We already know that the realization of the bi-Laplacian with free boundary conditions generates a semigroup that is not even individually asymptotically positive, hence certainly not eventually sub-Markovian. Also under hinged, clamped, or sliding boundary conditions the form domain of $A$ is a subspace of $H^4(0,1)$ and hence the projector onto $B^\infty_1(0)$ does not leave it invariant. Accordingly, 
by the Beurling-Deny criteria the semigroup generated by $-A$ cannot be $L^\infty$-contractive under any of these boundary conditions.
However, such a semigroup is eventually $L^\infty$-contractive -- and hence eventually sub-Markovian -- under sliding boundary conditions (and only under those), since the null space of $A$ is one-dimensional, as it only contains constant functions, and the associated eigenprojector is of course a positive mapping.
\end{ex}

More generally, a direct application of Proposition~\ref{prop:glumug} yields the following.

\begin{prop}
The semigroup generated by minus the Friedrichs extension $A_F$ (i.e., the realization with conditions~\eqref{condNic-friedr}) of the bi-Laplacian $(A,D_{cont}(A))$ is eventually sub-Markovian, and so is the semigroup generated by $-A$ with conditions~\eqref{condNic1}.
\end{prop}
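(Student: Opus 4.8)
The plan is to reduce the statement to Proposition~\ref{prop:glumug}, exactly as was done for the discrete bi-Laplacian in the proof of Proposition~\ref{prop:eventlinftydiscr}. Concretely, I would check that each of the two semigroups in question is eventually positive and that it fixes the constant function ${\bf 1}$, which is an order unit of $L^\infty(\mathcal G)$ (and lies in $L^2(\mathcal G)$ since $\mathcal G$ is compact); these are precisely the hypotheses under which Proposition~\ref{prop:glumug} delivers eventual $L^\infty$-contractivity, hence — together with eventual positivity — the eventual sub-Markovian property.

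First I would invoke Proposition~\ref{prop:event-friedkrein}: it states that the semigroup generated by $-A_F$ is eventually irreducible, and the same for the realization with conditions~\eqref{condNic1}; in particular both are eventually positive, i.e.\ there is $t_1>0$ such that $e^{-tA_F}\ge 0$, and the analogous operator for~\eqref{condNic1} is positive, for all $t\ge t_1$.

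Next I would verify that ${\bf 1}\in\ker A_F$ and likewise that ${\bf 1}$ lies in the null space of the realization with conditions~\eqref{condNic1}. This is immediate: the constant function takes the same value at all endpoints incident with a vertex (continuity), has vanishing first and third derivatives (so the condition $\frac{\partial u_\me}{\partial\nu}(\mv)=0$, the Kirchhoff conditions on $\frac{\partial^3 u_\me}{\partial\nu^3}(\mv)$, and — for~\eqref{condNic1} — the continuity of $u''$ are all satisfied trivially), and ${\bf 1}''''=0$. Hence $e^{-tA_F}{\bf 1}={\bf 1}$ for all $t\ge 0$, and similarly for the other realization.

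Combining the two observations gives the conclusion, and one even sees the mechanism without appealing to the appendix: for $t\ge t_1$ and $u\in L^\infty(\mathcal G)$ with $\|u\|_{L^\infty(\mathcal G)}\le 1$, positivity of $e^{-tA_F}$ yields $|e^{-tA_F}u|\le e^{-tA_F}|u|\le e^{-tA_F}{\bf 1}={\bf 1}$, so $\|e^{-tA_F}u\|_{L^\infty(\mathcal G)}\le 1$; thus for $t\ge t_1$ the operator $e^{-tA_F}$ is simultaneously positive and $L^\infty$-contractive, which is eventual sub-Markovianity, and the same argument applies verbatim to conditions~\eqref{condNic1}. There is essentially no obstacle once Proposition~\ref{prop:event-friedkrein} is in hand; the only point to watch is that the relevant leading eigenfunction is genuinely the constant ${\bf 1}$ and that it is fixed by the flow, which rests on the vertex conditions being of type $R=0$ and imposing no constraint that would rule out constants.
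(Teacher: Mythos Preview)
Your proposal is correct and follows essentially the same route as the paper, which simply states that the result is a direct application of Proposition~\ref{prop:glumug}. Your unpacking---invoking Proposition~\ref{prop:event-friedkrein} for eventual positivity and then checking $A{\bf 1}=0$---is in fact precisely the content of the proof of Proposition~\ref{prop:glumug} itself (eventual strong positivity via the spectral criterion, combined with ${\bf 1}$ being a fixed point), so the two arguments coincide; one small remark is that the actual hypotheses of Proposition~\ref{prop:glumug} are the spectral conditions (simple eigenvalue $0$ with eigenspace $\langle{\bf 1}\rangle$, compact resolvent, domain embedding into $L^\infty$) rather than ``eventually positive and fixes ${\bf 1}$'', but since you supply the direct lattice argument anyway this is harmless.
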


The same holds for the semigroup discussed in Proposition~\ref{prop:oddc}.

\subsection{Spectral issues}\label{sec:spectrum}

Let us briefly discuss some features of the bi-Laplacian spectrum.
As already mentioned, the bi-Laplacian with vertex conditions~\eqref{condNic1} is the square of the Laplacian $\Delta^{CK}$ with continuity and Kirchhoff vertex conditions, a self-adjoint, negative semi-definite operator; accordingly, $(\Delta^{CK})^2$ has pure point spectrum given by $\big\{\mu^2:\mu\hbox{ is an eigenvalue of }\Delta^{CK}\big\}$ and all spectral information available for $\Delta^{CK}$ (see e.g.~\cite{Bel85,KenKurMal16}) can be readily extended to $(\Delta^{CK})^2$.

Accordingly, under our standing assumption that $\mathcal G$ is connected $0$ is a simple eigenvalue of $\big\{\mu^2:\mu\in \sigma(\Delta^{CK})\big\}$ and the associated eigenspace is spanned by the constant function $\bf 1$, i.e., it is the space of all constant functions on $\mathcal G$: it follows that for all initial data $f\in L^2(\mathcal G)$ the solution $u$ of
\begin{equation}\label{problem-1}
\begin{cases} \frac{\partial u}{\partial t}(t,x)=-\frac{\partial ^4u}{\partial x^4}(t,x) & t\geq 0,\, x\in\mathcal G,\\ 
u_\me(t,\mv)=u_\mf(t,\mv)& \textrm{if}\ \me\cap \mf=\mv,\ t\ge 0,\\
\sum\limits_{\me\in \mE_{\mv}}\frac{\partial u_\me}{\partial \nu}(t,\mv)=0& \forall\ \mv\in \mV,\ t\ge 0,\\
\frac{\partial^2 u_\me}{\partial x^2}(t,\mv)=\frac{\partial^2 u_\mf}{\partial x^2}(t,\mv)& \textrm{if}\ \me\cap \mf=\mv,\ t\ge 0,\\
\sum\limits_{\me\in \mE_{\mv}}	\frac{\partial^3 u_\me}{\partial \nu^3}(t,\mv)=0& \forall\ \mv\in \mV,\ t\ge 0,\\
 u(0,x)=f(x), & x\in\mathcal G
 \end{cases} 
 \end{equation}
converges to the projection of $f$ onto $\langle {\mathbf 1}\rangle$, i.e., to the mean value of $f$.

Furthermore, all networks that are extremal with respect to the spectral gap of $\Delta^{CK}$ are also extremal with respect to the spectral gap of $(\Delta^{CK})^2$.
In view of the spectral theorem, estimates on eigenvalues of $\Delta^{CK}$ also influence the long-time behavior of the semigroup generated by $-(\Delta^{CK})^2$, since the orthogonal projector onto the eigenspace $\langle \bf 1\rangle$ is the uniform limit of $(e^{-t(\Delta^{CK})^2})_{t\ge 0}$ as $t\to\infty$, with convergence rate $e^{-\mu_2^2 t}$, where $\mu_2$ is the second lowest eigenvalue of $\Delta^{CK}$. Accordingly, we can e.g.\ deduce from~\cite[Théo.~3.1]{Nic87} 
that among all networks of given total length $L$ the slowest convergence to equilibrium for the solution of~\eqref{problem-1}
 is attained on an interval of length $L$. Similarly, given a network on $E\ge 3$ edges, it follows from~\cite[Thm.~4.2]{KenKurMal16} that convergence to equilibrium is fastest on equilateral pumpkin or flower graphs. 
If we try to extend this result to different realizations of the bi-Laplacian, we see that not all surgery principles in~\cite{BerKenKur19} -- the most basic tool in spectral analysis of Laplacians on networks -- carry over naively to bi-Laplacians $A_{Y,R}$ under general transmission conditions in the vertices. We will focus on this issue in a forthcoming paper. 

It \textit{is} however true that the operation of joining two or more different vertices of the metric graph leads to larger eigenvalues provided it makes the form domain smaller: this is e.g.\ the case if conditions~\eqref{condNic2}, \eqref{condNic3}, \eqref{condNic-friedr}, and~\eqref{condNic-krein} (but not~\eqref{condNic1}) are imposed in each vertex: in these cases, the bi-Laplacian has largest $k$-th eigenvalue, for some or equivalently for all $k\in\mathbb N$ -- and in particular the rate of convergence of the semigroup  towards the eigenspace $\langle {\mathbf 1}\rangle$ is fastest, provided 0 is a simple eigenvalue -- if all vertices are identified to form a flower graph.

\section{Appendix}\label{sec:app}

We argue that in the context of parabolic problems associated with different realizations of the bi-Laplacian, \textit{eventual positivity} and its relaxation, \textit{asymptotic positivity} are more appropriate notions: in the context of operators on general Banach lattices they have been thoroughly discussed in~\cite{DanGluKen16b,DanGluKen16}. For the sake of self-containedness, let us present a reminder of the most relevant results obtained so far in this area.

Recall that if $X$ is a $\sigma$-finite measure space, then $L^2(X)$ is a Hilbert lattice~\cite{Nag86}; if we denote its positive cone by $L^2(X)_+$, then for all $u\in L^2(X)_+$ the \textit{principal ideal} generated by $u$ is the set
 \[
 L^2(X)_u:=\{f\in L^2(X):\exists c>0:|f|\le cu\}\ .
 \]
We write $f\gg_u 0$ if $f$ is real and there exists $\epsilon>0$ such that $f\ge \epsilon u$. Observe that if $f>0$ a.e., then $L^2(X)_u$ is dense in $L^2(X)$, i.e., $u$ is a \textit{quasi-interior} point of $L^2(X)_+$ in the language of Banach lattice theory; we can thus formulate the above-mentioned results as follows.

\begin{defi}
Let $X$ be a $\sigma$-finite measure space and $L^2(X)_+$   be the  positive cone of the Hilbert lattice  $L^2(X)$. A strongly continuous semigroup $(T(t))_{t\ge 0}$ is said to be 
\begin{itemize}
\item \emph{individually asymptotically positive} if 
\[
f\ge 0\Rightarrow \lim_{t\to\infty}\dist(e^{-ts(-A)}T(t)f,L^2(X)_+)=0,
\]
provided the spectral bound $s(-A)$ of its generator $-A$ is finite.
\item \emph{uniformly eventually strongly positive}  with respect to $u\in L^2(X)_+$ if
\[
\exists t_0>0\hbox{ s.t.}\quad f\ge 0,\ f\ne 0 \Rightarrow T(t)f\gg_u  0\quad \hbox{for all }t\ge t_0.
\]
\end{itemize}
\end{defi}

If $X$ has finite measure and thus $u$ can be taken to be the constant function ${\bf 1}$, then $f\gg_u 0$ is equivalent to $f\gg 0$ and so \textit{uniform eventual strong positivity with respect to ${\bf 1}$} is nothing but \textit{eventual irreducibility}, a rather strong property. At the other end of the spectrum, {individual asymptotic positivity} seems to be the weakest conceivable condition that still gives a scintilla of positivity to a semigroup.

Let us recall the following corollaries of~\cite[Thm.~10.2.1]{Glu16} and~\cite[Thm.~8.3]{DanGluKen16}.

\begin{prop}\label{prop:nonstrongly}
Let $X$ be a $\sigma$-finite measure space and $(T(t))_{t\ge 0}$ be a  strongly continuous semigroup 
on the complex Hilbert lattice $L^2(X)$ with self-adjoint generator $-A$. Assume that $s(-A)>-\infty$, that $(e^{-t(A+s(-A))})_{t\ge 0}$ is bounded, and that $s(-A)$ is a pole of the resolvent. 

Then $(T(t))_{t\ge 0}$ is individually asymptotically positive if and only if $s(-A)$ is a dominant spectral value of A and the associated spectral projection $P$ is positive.
\end{prop}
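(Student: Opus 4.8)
The plan is to reduce the statement to an elementary fact about the operator-norm behaviour of $(T(t))_{t\ge0}$ as $t\to\infty$, after which both implications become soft. I would set $\lambda_0:=\inf\sigma(A)$, so that $s(-A)=-\lambda_0$. Since $A$ is self-adjoint and $s(-A)$ is a pole of the resolvent of $-A$, the point $\lambda_0$ is an isolated eigenvalue of finite multiplicity, the associated Riesz projection $P$ coincides with the orthogonal projection onto $\ker(A-\lambda_0)$, and there is a genuine spectral gap $\lambda_1:=\inf\bigl(\sigma(A)\setminus\{\lambda_0\}\bigr)>\lambda_0$ (the degenerate case $\sigma(A)=\{\lambda_0\}$ being trivial). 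Since $\sigma(-A)\subset\R$, the number $s(-A)$ is automatically the unique spectral value of $-A$ of maximal real part and is isolated, hence it is automatically a \emph{dominant} spectral value; so in the self-adjoint setting that clause of the conclusion carries no content, and only positivity of $P$ remains to be characterised. (I would also note in passing that the hypothesis that $(e^{-t(A+s(-A))})_{t\ge0}$ be bounded is automatic here, this rescaled semigroup being a contraction semigroup.)

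First I would record that $A$ leaves both the eigenspace $\mathrm{Rg}\,P$ and its orthogonal complement $\mathrm{Rg}(I-P)$ invariant, acting as the scalar $\lambda_0$ on the former and with spectrum contained in $[\lambda_1,\infty)$ on the latter; by the spectral theorem this gives $\bigl\|T(t)(I-P)\bigr\|=\bigl\|e^{-tA}(I-P)\bigr\|\le e^{-\lambda_1 t}$. Consequently
\[
\bigl\|e^{-s(-A)t}T(t)-P\bigr\|=\bigl\|e^{\lambda_0 t}e^{-tA}(I-P)\bigr\|\le e^{-(\lambda_1-\lambda_0)t}\xrightarrow[t\to\infty]{}0,
\]
that is, $e^{-s(-A)t}T(t)\to P$ in operator norm.

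For the implication $\Leftarrow$ I would assume $P\ge0$: then for every $f\ge0$ we have $Pf\in L^2(X)_+$, so
\[
\operatorname{dist}\bigl(e^{-s(-A)t}T(t)f,\,L^2(X)_+\bigr)\le\bigl\|e^{-s(-A)t}T(t)f-Pf\bigr\|\le\bigl\|e^{-s(-A)t}T(t)-P\bigr\|\,\|f\|\longrightarrow0,
\]
which is precisely individual asymptotic positivity (indeed a uniform version thereof). For the implication $\Rightarrow$ I would assume $(T(t))_{t\ge0}$ to be individually asymptotically positive and fix $f\ge0$; since $g\mapsto\operatorname{dist}(g,L^2(X)_+)$ is Lipschitz, hence continuous, letting $t\to\infty$ in $\operatorname{dist}(e^{-s(-A)t}T(t)f,L^2(X)_+)\to0$ together with $e^{-s(-A)t}T(t)f\to Pf$ gives $\operatorname{dist}(Pf,L^2(X)_+)=0$, and since $L^2(X)_+$ is closed this forces $Pf\ge0$. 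As $f\ge0$ was arbitrary, $P$ is a positive operator, and combined with the automatic dominance of $s(-A)$ this is the assertion.

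The only step that requires genuine care is the first one, namely upgrading ``$s(-A)$ is a pole of the resolvent'' into the quantitative spectral gap $\lambda_1>\lambda_0$ and thence into operator-norm convergence $e^{-s(-A)t}T(t)\to P$; once that is available, both directions are immediate. As an alternative to this self-contained argument, one could instead just verify that the hypotheses of~\cite[Thm.~10.2.1]{Glu16} and~\cite[Thm.~8.3]{DanGluKen16} are met in the present setting and quote those results directly.
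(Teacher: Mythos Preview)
Your argument is correct and self-contained. The paper itself does not prove this proposition at all: it is stated in the Appendix explicitly as a corollary of~\cite[Thm.~10.2.1]{Glu16} and~\cite[Thm.~8.3]{DanGluKen16}, with no proof supplied. You have instead given a direct argument that exploits self-adjointness to obtain operator-norm convergence $e^{-s(-A)t}T(t)\to P$ via the spectral gap, after which both implications are immediate. This is more informative than merely quoting the abstract Banach-lattice results, and it makes transparent why the ``dominant spectral value'' clause is vacuous in the self-adjoint case.

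One small inaccuracy: you assert that $\lambda_0$ is an isolated eigenvalue \emph{of finite multiplicity}. For a self-adjoint operator, being a pole of the resolvent is equivalent to being an isolated point of the spectrum (hence an eigenvalue), and the pole is always simple with residue the spectral projection; but the eigenspace may well be infinite-dimensional. Fortunately your argument never uses finite multiplicity---only the spectral gap $\lambda_1>\lambda_0$ is needed for the estimate $\|e^{-tA}(I-P)\|\le e^{-\lambda_1 t}$---so simply drop that clause.
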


\begin{prop}\label{prop:indivunif}
Let $X$ be a $\sigma$-finite measure space and $(T(t))_{t\ge 0}$ be a real, strongly continuous semigroup 
on the complex Hilbert lattice $L^2(X)$ with self-adjoint generator $-A$. Let $u>0$ a.e.\ and assume that $\bigcap_{k=1}^\infty D(A^k)\subset L^2(X)_u$.

 Then $(T(t))_{t\ge 0}$ is uniformly eventually strongly positive  with respect to $u$
 if and only if the spectral bound $s(-A)$ is a simple eigenvalue and the associated eigenspace contains a vector $v$ such that $v\gg_u 0$.
\end{prop}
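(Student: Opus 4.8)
The plan is to obtain Proposition~\ref{prop:indivunif} as the self-adjoint special case of the characterisation of uniform eventual strong positivity in \cite[Thm.~8.3]{DanGluKen16}; the work is to match that theorem's hypotheses, and below I also indicate the mechanism behind the two implications. First I would reduce to the normalised situation: after replacing $A$ by $A+s(-A)\Id$ — which rescales $(T(t))$ by the positive scalar $e^{-s(-A)t}$ and changes neither positivity nor the spectral structure — we may assume $s(-A)=0$, so that $-A\le 0$ and $0=\max\sigma(-A)$. Since $-A$ is self-adjoint, $(T(t))$ is analytic of angle $\tfrac\pi2$, hence $T(t)L^2(X)\subset\bigcap_{k\ge1}D(A^k)$ for every $t>0$; by hypothesis the latter lies in the principal ideal $L^2(X)_u$. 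I would then equip $L^2(X)_u$ with its gauge norm $\|h\|_u:=\inf\{c>0:|h|\le cu\}$, which is complete and makes $L^2(X)_u$ an AM-space with order unit $u$; because $\|\cdot\|_{L^2}\le\|u\|_{L^2}\,\|\cdot\|_u$, the closed graph theorem shows $T(t_0)\colon L^2(X)\to\bigl(L^2(X)_u,\|\cdot\|_u\bigr)$ is bounded for each $t_0>0$. Writing $N(f):=\langle|f|,u\rangle$ (a lattice norm, finite on all of $L^2(X)$), self-adjointness and duality then also give that $T(t_0)$ is bounded from $\bigl(L^2(X),N\bigr)$ into $L^2(X)$. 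These are precisely the structural inputs needed to invoke \cite[Thm.~8.3]{DanGluKen16}.

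For the implication $(\Leftarrow)$, suppose $0$ is a simple eigenvalue of $-A$ with eigenvector $v\gg_u0$, say $v\ge\delta u$; in the situations of interest $0$ is moreover isolated (e.g.\ whenever $A$ has compact resolvent, as for every realisation of the bi-Laplacian on a compact network, so that a spectral gap $\lambda_2>0$ is available). The spectral projection $P$ onto $\ker(-A)$ is then the rank-one orthogonal projection $Pf=\langle f,v\rangle v/\|v\|_{L^2}^2$, which is positive and satisfies $Pf=c_f v\ge\delta c_f u\ge c_0\,N(f)\,u$ for every $0\ne f\ge0$, with $c_f,c_0>0$. Writing $T(t)=T(t_0)\bigl(T(t-2t_0)-P\bigr)T(t_0)+P$ for $t\ge3t_0$ (using $T(t_0)P=PT(t_0)=P$) and combining the spectral-gap bound $\|T(s)-P\|_{L^2\to L^2}\le e^{-\lambda_2s}$ with the two smoothing estimates above, one gets $\|T(t)f-Pf\|_u\le Ce^{-\lambda_2(t-2t_0)}N(f)$, whence $T(t)f\ge\bigl(c_0-Ce^{-\lambda_2(t-2t_0)}\bigr)N(f)\,u$. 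Since $N(f)>0$ for $0\ne f\ge0$, there is a single threshold $t_1$ — depending only on $c_0,C,\lambda_2$ — with $T(t)f\gg_u0$ for all $t\ge t_1$ and all $0\ne f\ge0$.

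For the converse $(\Rightarrow)$, uniform eventual strong positivity forces, by a Perron--Frobenius argument on the spectral decomposition of the self-adjoint operator $-A$ (examining $\lim_{t\to\infty}e^{-s(-A)t}T(t)f$ on a positive $f$ not orthogonal to the leading spectral subspace, using the two-sided smoothing to keep the limit quasi-interior, and excluding contributions from other spectral values of $-A$), that $s(-A)$ is the dominant spectral value, that it is a simple eigenvalue, and that its eigenspace contains a vector $v\gg_u0$. This is exactly the content of \cite[Thm.~8.3]{DanGluKen16}, so I would simply invoke it here after the hypothesis check of the first paragraph.

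The point where real work is needed — and the reason a one-line argument does not suffice — is the \emph{uniformity in $t$} in $(\Leftarrow)$: upgrading the easy \emph{individual} eventual strong positivity, where the threshold may depend on $f$, to a single threshold $t_1$ valid for all positive initial data. This is exactly where the domination hypothesis $\bigcap_{k\ge1}D(A^k)\subset L^2(X)_u$ is genuinely used, through the two-sided smoothing $T(t)=T(t_0)(\cdots)T(t_0)$ and the AM--AL duality of $L^2(X)_u$; without it one is left only with individual eventual positivity. A secondary caveat worth flagging is that ``$s(-A)$ is a simple eigenvalue'' has to be understood together with its being isolated (a pole of the resolvent), which is automatic in all the concrete settings of this paper but would otherwise be an extra standing assumption, as in \cite{DanGluKen16}.
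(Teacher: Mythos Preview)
Your approach is exactly what the paper does: Proposition~\ref{prop:indivunif} is stated there without proof, simply as a corollary of \cite[Thm.~8.3]{DanGluKen16}, and your hypothesis-matching argument is a correct elaboration of that citation. The additional mechanism you sketch (normalisation, smoothing into the gauge-normed ideal, the uniformity argument via the spectral gap) goes beyond what the paper records but is consistent with the cited source.
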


It is remarkable that under the assumption of Proposition~\ref{prop:indivunif}, the semigroup is necessarily of trace class and hence its generator has purely point spectrum; whereas Proposition~\ref{prop:nonstrongly} may apply also in the case of a generator whose spectrum is not purely discrete, but which does have a dominant eigenvalue.

We can deduce the following result for semigroups associated with forms from the Beurling--Deny criteria.

\begin{cor}\label{cor:indivunif}
Let $X$ be a finite measure space and $a$ be a closed symmetric form on $L^2(X)$; denote by $A$ the associated operator. Then the following assertions hold.

\begin{enumerate}[(1)]
\item $(e^{-tA})_{t\ge 0}$ is individually asymptotically positive if and only if $s(-A)$ is a dominant spectral value of $-A$ and the associated spectral projection is positive.

\item Let additionally $D(a)\subset L^\infty(X)$ and $\Real u\in D(a)$ and $a(\Real u, \Ima u)\in \R$ for all $u\in D(a)$. 
Then $(e^{-tA})_{t\ge 0}$ is eventually irreducible if and only if the spectral bound $s(-A)$ is a simple eigenvalue and the associated eigenspace contains a vector $v$ such that $v\gg 0$.
\end{enumerate}
\end{cor}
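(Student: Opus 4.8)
The plan is to read off both equivalences from the abstract criteria recalled in Propositions~\ref{prop:nonstrongly} and~\ref{prop:indivunif}, the only real work being to check that their hypotheses are fulfilled in the present form-theoretic setting. Since $a$ is a closed symmetric (and, as throughout, lower semibounded) form, the associated operator $A$ is self-adjoint and bounded below; setting $\lambda_1:=\inf\sigma(A)$, we have $s(-A)=-\lambda_1>-\infty$, and by the spectral theorem $(e^{-t(A-\lambda_1)})_{t\ge0}=(e^{-t(A+s(-A))})_{t\ge0}$ is a contraction semigroup, in particular bounded. Hence the sole remaining hypothesis of Proposition~\ref{prop:nonstrongly} is that $s(-A)$ be a pole of the resolvent of $-A$; for a self-adjoint operator this simply means that $-\lambda_1$ is isolated in $\sigma(-A)$, which is automatic as soon as $A$ has compact resolvent --- and this is the case in every situation considered in this paper (trivially for discrete graphs, and by Proposition~\ref{prop:gener} for networks, where the resolvent is even of trace class). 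Granting this, assertion~(1) is exactly Proposition~\ref{prop:nonstrongly}.

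For assertion~(2) I would take $u:=\mathbf 1$, which is legitimate since $X$ has finite measure: then $\mathbf 1\in L^2(X)_+$, $\mathbf 1>0$ a.e., and the principal ideal it generates is $L^2(X)_{\mathbf 1}=L^\infty(X)$. Three points then need checking. First, $(e^{-tA})_{t\ge0}$ is a real semigroup: this is precisely what the additional hypothesis "$\Real u\in D(a)$ and $a(\Real u,\Ima u)\in\R$ for all $u\in D(a)$" guarantees, via the Beurling--Deny type criterion \cite[Prop.~2.5]{Ouh05} (note that $u,\Real u\in D(a)$ also forces $\Ima u\in D(a)$). Second, since $A$ is the operator associated with $a$ we have the standard inclusion $D(A)\subseteq D(a)$, whence $D(A)\subseteq D(a)\subseteq L^\infty(X)=L^2(X)_{\mathbf 1}$ and, a fortiori, $\bigcap_{k\ge1}D(A^k)\subseteq L^2(X)_{\mathbf 1}$. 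Third, $-A$ is self-adjoint. All hypotheses of Proposition~\ref{prop:indivunif} are thus in place, and it yields that $(e^{-tA})_{t\ge0}$ is uniformly eventually strongly positive with respect to $\mathbf 1$ if and only if $s(-A)$ is a simple eigenvalue whose eigenspace contains a vector $v$ with $v\gg_{\mathbf 1}0$, i.e.\ $v\gg0$; and, as observed in the text preceding the statement, on a finite measure space uniform eventual strong positivity with respect to $\mathbf 1$ coincides with eventual irreducibility. This is the claim.

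I expect the only genuinely delicate point to be the pole condition needed in part~(1): a general closed symmetric form need not have $s(-A)$ isolated in the spectrum (for instance if $-\lambda_1$ is the top of the essential spectrum, in which case the $t\to\infty$ limit of $e^{-t(A-\lambda_1)}$ may be $0$ without $s(-A)$ being a dominant spectral value), so part~(1) should be understood together with the standing assumption --- automatically satisfied for the discrete and differential bi-Laplacians treated here --- that the resolvent is compact, equivalently that $s(-A)$ is a pole of $(\,\cdot\,-A)^{-1}$. Everything else is a routine matching of hypotheses: the reality of the semigroup is handled by the cited Ouhabaz criterion, and the ideal inclusion $\bigcap_k D(A^k)\subseteq L^\infty(X)$ follows at once from $D(A)\subseteq D(a)\subseteq L^\infty(X)$.
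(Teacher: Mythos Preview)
Your proposal is correct and matches the paper's intended argument: the paper presents this corollary without a detailed proof, simply remarking that it follows ``from the Beurling--Deny criteria'' applied to Propositions~\ref{prop:nonstrongly} and~\ref{prop:indivunif}, and you have spelled out exactly that verification. Your observation that part~(1) as literally stated requires the pole hypothesis of Proposition~\ref{prop:nonstrongly} --- guaranteed in all the paper's applications by compactness of the resolvent but not by the corollary's hypotheses alone --- is a valid point that the paper leaves implicit.
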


In the same spirit one may ask whether a given semigroup is \textit{eventually} $L^\infty$-contractive. This issue was not investigated in~\cite{DanGluKen16,DanGluKen16b} but an answer can be given in a simple case that covers, in particular, finite graphs and networks.

Our result seems to be of independent interest. It has been observed by the second author together with Jochen Glück (Ulm).
 
\begin{prop}\label{prop:glumug}
Let $X$ be a finite measure space and $A$ a self-adjoint operator with compact resolvent on $L^2(X)$. Let $0$ be the spectral bound of $A$ and let the associated eigenspace $E_0$ be a one-dimensional space spanned by $\bf 1$. Also assume that $\bigcap_{k\in \mathbb N}D(A^k)\hookrightarrow L^\infty(X)$. Then the semigroup generated by $A$ is uniformly eventually sub-Markovian, i.e., uniformly eventually positive and uniformly eventually $L^\infty$-contractive.
\end{prop}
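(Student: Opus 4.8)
The plan is to reduce the whole assertion to \emph{eventual positivity}, which is then delivered by the abstract results recalled in Section~\ref{sec:app}. First I record the structural facts. Since $A$ is self-adjoint with compact resolvent and $0$ is its spectral bound, $A$ is positive semidefinite with purely discrete spectrum $0=\lambda_1<\lambda_2\le\lambda_3\le\cdots$, so $0$ is a simple eigenvalue whose eigenspace is $\langle\mathbf 1\rangle$; the associated orthogonal projector $P_0$ is the averaging operator $f\mapsto \mu(X)^{-1}\big(\int_X f\,d\mu\big)\mathbf 1$, and $e^{-tA}\mathbf 1=\mathbf 1$ for all $t\ge0$ because $A\mathbf 1=0$. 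The crucial -- and elementary -- observation is that \emph{once $e^{-tA}$ is known to be positive it is automatically $L^\infty$-contractive}: for $f\in L^\infty(X)$ one has $|f|\le\|f\|_{L^\infty(X)}\mathbf 1$, hence $|e^{-tA}f|\le e^{-tA}|f|\le\|f\|_{L^\infty(X)}e^{-tA}\mathbf 1=\|f\|_{L^\infty(X)}\mathbf 1$, so that $\|e^{-tA}f\|_{L^\infty(X)}\le\|f\|_{L^\infty(X)}$. It therefore suffices to show that $(e^{-tA})_{t\ge0}$ is uniformly eventually (strongly) positive.

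For this step I would invoke Proposition~\ref{prop:indivunif} with the quasi-interior point $u:=\mathbf 1$, which is strictly positive a.e.\ since $X$ has finite measure. For the same reason the principal ideal $L^2(X)_{\mathbf 1}$ coincides with $L^\infty(X)$, so the standing hypothesis $\bigcap_{k\in\mathbb N}D(A^k)\hookrightarrow L^\infty(X)$ is precisely the inclusion $\bigcap_{k}D(A^k)\subset L^2(X)_{\mathbf 1}$ required there; moreover $s(-A)=0$ is a simple eigenvalue of $-A$ and its eigenspace contains the vector $\mathbf 1$, which satisfies $\mathbf 1\gg_{\mathbf 1}0$. Proposition~\ref{prop:indivunif} (the semigroup being real) then yields $t_0>0$ such that $f\ge0$, $f\ne0$ implies $e^{-tA}f\gg_{\mathbf 1}0$ for all $t\ge t_0$; in particular $e^{-tA}\ge0$ -- and is even irreducible -- for all $t\ge t_0$. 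Together with the previous paragraph this shows that $e^{-tA}$ is simultaneously positive and $L^\infty$-contractive for $t\ge t_0$, i.e.\ the semigroup is uniformly eventually sub-Markovian.

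A self-contained argument for the eventual positivity, not relying on the eventual-positivity theory, runs as follows. Write $e^{-tA}=P_0+S(t)$ with $S(t):=e^{-tA}(I-P_0)$, so that $\|S(t)\|_{L^2(X)\to L^2(X)}\le e^{-\lambda_2 t}$. Since $e^{-A}$ maps $L^2(X)$ into $\bigcap_k D(A^k)\hookrightarrow L^\infty(X)$, the operator $e^{-A}\colon L^2(X)\to L^\infty(X)$ is bounded, hence by self-adjointness and duality so is $e^{-A}\colon L^1(X)\to L^2(X)$; using $P_0e^{-A}=e^{-A}P_0=P_0$ and the factorisation $S(t)=e^{-A}S(t-2)e^{-A}$ one obtains $\|S(t)\|_{L^1(X)\to L^\infty(X)}\le C^2 e^{-\lambda_2(t-2)}$ for $t\ge2$. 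By the Kantorovitch--Vulikh Theorem $S(t)$ is then an integral operator whose kernel has $L^\infty(X\times X)$-norm at most $C^2 e^{-\lambda_2(t-2)}$, whereas $P_0$ has the constant kernel $\mu(X)^{-1}$; hence the kernel of $e^{-tA}$ is bounded below by $\mu(X)^{-1}-C^2 e^{-\lambda_2(t-2)}$, which is strictly positive for $t$ large, so $e^{-tA}$ is positive (in fact positivity improving) for large $t$. I do not expect a genuine obstacle along either route: the only point demanding a little care is the smoothing estimate upgrading the $L^2$-decay of $S(t)$ to $L^1$-to-$L^\infty$ decay, while the conceptual heart of the statement is just the fact that, since $e^{-tA}$ fixes $\mathbf 1$, $L^\infty$-contractivity comes for free once positivity is known.
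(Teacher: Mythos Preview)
Your proof is correct and follows essentially the same route as the paper: invoke Proposition~\ref{prop:indivunif} with $u=\mathbf 1$ to obtain uniform eventual strong positivity, and then observe that positivity together with $e^{-tA}\mathbf 1=\mathbf 1$ forces $L^\infty$-contractivity. Your additional self-contained kernel argument for eventual positivity (via the decomposition $e^{-tA}=P_0+S(t)$ and the $L^1\to L^\infty$ smoothing of $e^{-A}$) is not in the paper and is a pleasant bonus, giving an explicit and quantitative alternative to the black-box appeal to the Daners--Gl\"uck--Kennedy theory.
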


\begin{proof}
First of all, let us observe that the eigenprojector onto $E_0$, which is the rank-one operator
\[
P:f\mapsto \frac{1}{|X|}\int_X |f|\ dx
\]
is also a positive, bounded linear operator on $L^\infty(X)$ with norm 1. It follows from Proposition~\ref{prop:indivunif} that the semigroup is uniformly eventually strongly positive -- say it is positive for all $t>t_0$.


Let now $t>t_0$ and hence $e^{tA}$ be positive: in order to show that $e^{tA}$ is $L^\infty$-contractive and hence sub-Markovian, it suffices to check that $e^{t_0 A}{\bf 1}= {\bf 1}$, i.e., $A{\bf 1}=0$. This holds by assumption.
\end{proof}

\bibliographystyle{alpha}
\bibliography{literatur}

\begin{thebibliography}{KKMM16}

\bibitem[ABHN01]{AreBatHie01}
W.\ Arendt, C.J.K.\ Batty, M.\ Hieber, and F.\ Neubrander.
\newblock {\em Vector-{V}alued {L}aplace {T}ransforms and {C}auchy {P}roblems},
  volume~96 of {\em Monographs in Mathematics}.
\newblock Birkh{\"a}user, Basel, 2001.

\bibitem[AN70]{AndNis70}
T.~Ando and K.~Nishio.
\newblock Positive selfadjoint extensions of positive symmetric operators.
\newblock {\em Tokohu Math.\ J.}, 22:65--75, 1970.

\bibitem[Are04]{Are04}
W.\ Arendt.
\newblock Semigroups and evolution equations: Functional calculus, regularity
  and kernel estimates.
\newblock In C.M.\ Dafermos and E.\ Feireisl, editors, {\em Handbook of
  Differential Equations: Evolutionary Equations -- Vol.\ 1}. North Holland,
  Amsterdam, 2004.

\bibitem[Are06]{Are06}
W.\ Arendt.
\newblock Heat {K}ernels -- {M}anuscript of the $9^{\rm th}$ {I}nternet
  {S}eminar, 2006.
\newblock (freely available at
  \url{http://www.uni-ulm.de/fileadmin/website_uni_ulm/mawi.inst.020/arendt/downloads/internetseminar.pdf}).

\bibitem[AtE17]{AreEls17}
W.\ Arendt and T.~ter Elst.
\newblock {The Dirichlet-to-Neumann operator on $ C(\partial\Omega)$}.
\newblock arXiv:1707.05556, 2017.

\bibitem[BD59]{BeuDen59}
A.\ Beurling and J.~Deny.
\newblock {Dirichlet spaces}.
\newblock {\em Proc.\ Natl.\ Acad.\ Sci.\ USA}, 45:208--215, 1959.

\bibitem[Bel85]{Bel85}
{J.\ von} Below.
\newblock A characteristic equation associated with an eigenvalue problem on
  $c^2$-networks.
\newblock {\em Lin.\ Algebra Appl.}, 71:309--325, 1985.

\bibitem[BK13]{BerKuc13}
G.\ Berkolaiko and P.~Kuchment.
\newblock {\em {Introduction to Quantum Graphs}}, volume 186 of {\em Math.\
  Surveys and Monographs}.
\newblock Amer.\ Math.\ Soc., Providence, RI, 2013.

\bibitem[BKKMar]{BerKenKur19}
G.~Berkolaiko, J.B. Kennedy, P.~Kurasov, and D.~Mugnolo.
\newblock Surgery principles for the spectral analysis of quantum graphs.
\newblock {\em Trans.\ Amer.\ Math.\ Soc.}, (to appear).

\bibitem[BL04]{BorLaz04}
A.V. Borovskikh and K.P. Lazarev.
\newblock Fourth-order differential equations on geometric graphs.
\newblock {\em J.\ Math.\ Sci.}, 119:719--738, 2004.

\bibitem[BM15]{BonMaz15}
S.~Bonaccorsi and S.~Mazzucchi.
\newblock High order heat-type equations and random walks on the complex plane.
\newblock {\em Stochastic Process.\ Appl.}, 125:797--818, 2015.

\bibitem[CDKP87]{CheDelKra87}
G.~Chen, M.C. Delfour, A.M. Krall, and G.~Payre.
\newblock Modeling, stabilization and control of serially connected beams.
\newblock {\em SIAM J.\ Control Opt.}, 25:526--546, 1987.

\bibitem[CFL28]{CouFriLew28}
R.~Courant, K.~Friedrichs, and H.~Lewy.
\newblock {\"Uber die partiellen Differenzengleichungen der mathematischen
  Physik}.
\newblock {\em Math.\ Ann.}, 100:32--74, 1928.

\bibitem[CM09]{CarMug09}
S.\ Cardanobile and D.\ Mugnolo.
\newblock Parabolic systems with coupled boundary conditions.
\newblock {\em J.\ Differ.\ Equ.}, 247:1229--1248, 2009.

\bibitem[Dav95a]{Dav95b}
E.B. Davies.
\newblock Long time asymptotics of fourth order parabolic equations.
\newblock {\em Journal d'Analyse Math{\'e}matique}, 67:323--345, 1995.

\bibitem[Dav95b]{Dav95}
E.B. Davies.
\newblock Uniformly elliptic operators with measurable coefficients.
\newblock {\em J.\ Funct.\ Anal.}, 132:141--169, 1995.

\bibitem[Dav07]{Dav07}
E.B. Davies.
\newblock {\em {Linear Operators And Their Spectra}}.
\newblock Cambridge Univ.\ Press, Cambridge, 2007.

\bibitem[DGK16a]{DanGluKen16b}
D.~Daners, J.~Gl{\"u}ck, and J.B. Kennedy.
\newblock Eventually and asymptotically positive semigroups on {B}anach
  lattices.
\newblock {\em J.\ Differ.\ Equ.}, 261:2607--2649, 2016.

\bibitem[DGK16b]{DanGluKen16}
D.~Daners, J.~Gl{\"u}ck, and J.B. Kennedy.
\newblock Eventually positive semigroups of linear operators.
\newblock {\em J.\ Math.\ Anal.\ Appl.}, 433:1561--1593, 2016.

\bibitem[Die05]{Die05}
R.\ Diestel.
\newblock {\em Graph Theory}, volume 173 of {\em Graduate Texts in
  Mathematics}.
\newblock Springer-Verlag, Berlin, 2005.

\bibitem[DL88]{DauLio88}
R.\ Dautray and J.-L.\ Lions.
\newblock {\em {Mathematical Analysis and Numerical Methods for Science and
  Technology, Vol.\ 2}}.
\newblock Springer-Verlag, Berlin, 1988.

\bibitem[DN99]{DekNic99}
B.~Dekoninck and S.~Nicaise.
\newblock {Control of networks of Euler-Bernoulli beams}.
\newblock {\em ESAIM: Control, Optimisation and Calculus of Variations},
  4:57--81, 1999.

\bibitem[DN00]{DekNic00}
B.~Dekoninck and S.~Nicaise.
\newblock The eigenvalue problem for networks of beams.
\newblock {\em Lin.\ Algebra Appl.}, 314:165--189, 2000.

\bibitem[DZ06]{DagZua06}
R.~D{\'a}ger and E.~Zuazua.
\newblock {\em Wave propagation, observation and control in 1-d flexible
  multi-structures}, volume~50 of {\em Math\'em.\ \& Appl.}
\newblock Springer-Verlag, Berlin, 2006.

\bibitem[Eng92]{Eng92}
K.-J. Engel.
\newblock On singular perturbations of second order {C}auchy problems.
\newblock {\em Pacific J.\ Math.}, 152:79--91, 1992.

\bibitem[FGG08]{FerGazGru08}
A.~Ferrero, F.~Gazzola, and H.-C. Grunau.
\newblock Decay and eventual local positivity for biharmonic parabolic
  equations.
\newblock {\em Disc.\ Cont.\ Dyn.\ Syst.}, 21:1129--1157, 2008.

\bibitem[fh]{Fed17}
fedja (https://mathoverflow.net/users/1131/fedja).
\newblock An elementary inequality for graph {L}aplacians.
\newblock MathOverflow.
\newblock \url{https://mathoverflow.net/q/287765 (version: 2017-12-05)}.

\bibitem[Fie73]{Fie73}
M.~Fiedler.
\newblock Algebraic connectivity of graphs.
\newblock {\em Czech.\ Math.\ J.}, 23:298--305, 1973.

\bibitem[FOT10]{FukOshTak10}
M.~Fukushima, Y.~Oshima, and M.~Takeda.
\newblock {\em {Dirichlet forms and symmetric Markov processes}}, volume~19 of
  {\em Studies in Math.}
\newblock de Gruyter, Berlin, 2010.

\bibitem[Fun79]{Fun79}
T.~Funaki.
\newblock Probabilistic construction of the solution of some higher order
  parabolic differential equation.
\newblock {\em Proc.\ Japan Acad., Ser.\ A}, 55:176--179, 1979.

\bibitem[GG08]{GazGru08}
F.~Gazzola and H.-C. Grunau.
\newblock {Eventual local positivity for a biharmonic heat equation in $\mathbb
  R^n$}.
\newblock {\em Disc.\ Cont.\ Dyn.\ Syst.\ S}, (83-87):265--266, 2008.

\bibitem[GH69]{GriHer69}
R.J. Griego and R.~Hersh.
\newblock Random evolutions, {M}arkov chains, and systems of partial
  differential equations.
\newblock {\em Proc.\ Natl.\ Acad.\ Sci.\ USA}, 62:305--308, 1969.

\bibitem[GH96]{GnuHaa96}
S.~Gnutzmann and F.~Haake.
\newblock Positivity violation and initial slips in open systems.
\newblock {\em Z.\ Phys.\ B}, 101:263--273, 1996.

\bibitem[GKO08]{GiaKnuOtt08}
L.~Giacomelli, H.~Kn{\"u}pfer, and F.~Otto.
\newblock Smooth zero-contact-angle solutions to a thin-film equation around
  the steady state.
\newblock {\em J.\ Differ.\ Equ.}, 245:1454--1506, 2008.

\bibitem[Gl{\"u}16]{Glu16}
J.\ Gl{\"u}ck.
\newblock {\em {Invariant Sets and Long Time Behaviour of Operator
  Semigroups}}.
\newblock PhD thesis, Universit{\"a}t Ulm, 2016.

\bibitem[GM94]{GM94}
J.~Gasch and L.~Maligranda.
\newblock On vector-valued inequalities of the marcinkiewicz-zygmund, herz and
  krivine type.
\newblock {\em Math. Nachr.}, 167:95--129, 1994.

\bibitem[GM16]{GreMil16}
F.\ Gregorio and S.~Mildner.
\newblock Fourth-order {S}chrödinger type operator with singular potentials.
\newblock {\em Arch.\ Math}, 107:285--294, 2016.

\bibitem[GMP14]{GalMitPoz14}
V.A. Galaktionov, E.L. Mitidieri, and S.I. Pohozaev.
\newblock {\em {Blow-up for Higher-Order Parabolic, Hyperbolic, Dispersion and
  Schrodinger Equations}}.
\newblock CRC Press, Boca Raton, FL, 2014.

\bibitem[Gra68]{Gra68}
B.\ Gramsch.
\newblock {Zum Einbettungssatz von Rellich bei Sobolevr\"aumen}.
\newblock {\em Math.\ Z.}, 106:81--87, 1968.

\bibitem[HBW99]{HanBenWei99}
S.M. Han, H.~Benaroya, and T.~Wei.
\newblock Dynamics of transversely vibrating beams using four engineering
  theories.
\newblock {\em J.\ Sound Vib.}, 225:935--988, 1999.

\bibitem[HK]{HK05}
O.~Holtz and M.~Karow.
\newblock Real and complex operator norms.
\newblock arxiv:math/0512608v1.

\bibitem[HLG18]{HufLinGal}
P.G. Hufton, Y.T. Lin, and T.~Galla.
\newblock Model reduction methods for classical stochastic systems with
  fast-switching environments: reduced master equations, stochastic
  differential equations, and applications.
\newblock arXiv:1803.02941, 2018.

\bibitem[Hoc78]{Hoc78}
K.J. Hochberg.
\newblock A signed measure on path space related to {W}iener measure.
\newblock {\em Ann.\ Probab.}, 6:433--458, 1978.

\bibitem[H{\"o}l39]{Hol39}
E.~H{\"o}lder.
\newblock Entwicklungss{\"a}tze aus der {T}heorie der zweiten {V}ariation --
  {A}llgemeine {R}andbedingungen.
\newblock {\em Acta Math.}, 70:193--242, 1939.

\bibitem[Kat54]{Kat54}
T.~Kato.
\newblock {On the semi-groups generated by Kolmogoroff's differential
  equations}.
\newblock {\em J.\ Math.\ Soc.\ Jap.}, 6:1--15, 1954.

\bibitem[KKMM16]{KenKurMal16}
J.B. Kennedy, P.~Kurasov, G.~Malenová, and D.~Mugnolo.
\newblock On the spectral gap of a quantum graph.
\newblock {\em Ann.\ Henri Poincar\'e}, 17:2439--2473, 2016.

\bibitem[KKU15]{KiiKurUsm15}
J.-C. Kiik, P.~Kurasov, and M.~Usman.
\newblock On vertex conditions for elastic systems.
\newblock {\em Phys.\ Lett.\ A}, 379:1871--1876, 2015.

\bibitem[KO02]{KohOtt02}
R.V.\ Kohn and F.~Otto.
\newblock Upper bounds on coarsening rates.
\newblock {\em Commun.\ Math.\ Phys.}, 229:375--395, 2002.

\bibitem[Kre47]{Kre47}
M.G. Krein.
\newblock The theory of self-adjoint extensions of semi-bounded hermitian
  transformations and its applications. {I}.
\newblock {\em Mat.\ Sbornik}, 20:431--495, 1947.

\bibitem[Kry60]{Kry60}
V.J. Krylov.
\newblock Some properties of the distribution corresponding to the equation
  $\partial u/\partial t=(-1)^{q+1}\partial^{2q}u/\partial x^{2q}$.
\newblock 1:760--763, 1960.

\bibitem[KS97]{KotSmi97}
T.\ Kottos and U.\ Smilansky.
\newblock Quantum chaos on graphs.
\newblock {\em Phys.\ Rev.\ Lett.}, 79:4794--4797, 1997.

\bibitem[KS99]{KosSch99}
V.\ Kostrykin and R.\ {S}chrader.
\newblock Kirchhoff's rule for quantum wires.
\newblock {\em J.\ Phys.\ A}, 32:595--630, 1999.

\bibitem[Kuc04]{Kuc04}
P.\ Kuchment.
\newblock Quantum graphs {I}: {S}ome basic structures.
\newblock {\em Waves Random Media}, 14:107--128, 2004.

\bibitem[LLS92]{LagLeuSch92}
J.E.\ Lagnese, G.\ Leugering, and E.J.P.G.\ Schmidt.
\newblock Modelling and controllability of networks of thin beams.
\newblock In {\em System Modelling and Optimization}, pages 467--480. Springer,
  1992.

\bibitem[LLS93]{LagLeuSch93}
J.E.\ Lagnese, G.\ Leugering, and E.J.P.G.\ Schmidt.
\newblock Modelling of dynamic networks of thin thermoelastic beams.
\newblock {\em Math.\ Meth.\ Appl.\ Sci.}, 16:327--358, 1993.

\bibitem[LLS94]{LagLeuSch94}
J.E.\ Lagnese, G.\ Leugering, and E.J.P.G.\ Schmidt.
\newblock {\em Modeling, {A}nalysis, and {C}ontrol of {D}ynamic {E}lastic
  {M}ulti-{L}ink {S}tructures}.
\newblock Systems and Control: Foundations and Applications. Birkh{\"a}user,
  Basel, 1994.

\bibitem[LP00]{LauPug00}
R.S. Laugesen and M.C. Pugh.
\newblock Linear stability of steady states for thin film and {Cahn-Hilliard}
  type equations.
\newblock {\em Arch.\ Ration.\ Mech.\ Anal.}, 154:3--51, 2000.

\bibitem[Lum80]{Lum80}
G.\ Lumer.
\newblock Connecting of local operators and evolution equations on networks.
\newblock In F.\ Hirsch, editor, {\em Potential Theory (Proc.\ Copenhagen
  1979)}, pages 230--243, Berlin, 1980. Springer-Verlag.

\bibitem[MN11]{MugNit11}
D.\ Mugnolo and R.\ Nittka.
\newblock Properties of representations of operators acting between spaces of
  vector-valued functions.
\newblock {\em Positivity}, 15:135--154, 2011.

\bibitem[Moh82]{Moh82}
B.~Mohar.
\newblock The spectrum of an infinite graph.
\newblock {\em Lin.\ Algebra Appl.}, 48:245--256, 1982.

\bibitem[MSW05]{MueShoWon05}
A.H. Mueller, A.I. Shoshi, and S.M.H. Wong.
\newblock Extension of the {JIMWLK} equation in the low gluon density region.
\newblock {\em Nuclear Phys.\ B}, 715:440--460, 2005.

\bibitem[Mug07]{Mug07}
D.\ Mugnolo.
\newblock Gaussian estimates for a heat equation on a network.
\newblock {\em Networks Het.\ Media}, 2:55--79, 2007.

\bibitem[Mug13]{Mug13}
D.~Mugnolo.
\newblock Parabolic theory of the discrete $p$-{L}aplace operator.
\newblock {\em Nonlinear Anal., Theory Methods Appl.}, 87:33--60, 2013.

\bibitem[Mug14]{Mug14}
D.~Mugnolo.
\newblock {\em {Semigroup Methods for Evolution Equations on Networks}}.
\newblock Underst.\ Compl.\ Syst. Springer-Verlag, Berlin, 2014.

\bibitem[Mug15]{Mug15b}
D.~Mugnolo.
\newblock Some remarks on the {K}rein-von {N}eumann extension of different
  {L}aplacians.
\newblock In J.~Banasiak, A.~Bobrowski, and M.~Lachowicz, editors, {\em
  Semigroups of Operators-Theory and Applications}, Proc.\ Math.\ \& Stat., New
  York, 2015. Springer-Verlag.

\bibitem[MVV05]{ManVogVoi05}
A.\ Manavi, H.\ Vogt, and J.\ Voigt.
\newblock {Domination of semigroups associated with sectorial forms}.
\newblock {\em J.\ Oper.\ Theory}, 54:9--25, 2005.

\bibitem[Nag86]{Nag86}
R.\ Nagel, editor.
\newblock {\em One-{P}arameter {S}emigroups of {P}ositive {O}perators}, volume
  1184 of {\em Lect.\ Notes Math.}
\newblock Springer-Verlag, Berlin, 1986.

\bibitem[Nic86]{Nic86}
S.\ Nicaise.
\newblock Probl{\'e}mes de {C}auchy pos{\'e}s en norme uniforme sur les espaces
  ramifi{\'e}s {\'e}l{\'e}mentaires.
\newblock {\em C.R.\ Acad.\ Sc.\ Paris S\'er.\ A}, 303:443--446, 1986.

\bibitem[Nic87]{Nic87}
S.\ Nicaise.
\newblock Spectre des r{\'e}seaux topologiques finis.
\newblock {\em Bull.\ Sci.\ Math., II.\ S{\'e}r.}, 111:401--413, 1987.

\bibitem[Nit12]{Nit12}
R.~Nittka.
\newblock Projections onto convex sets and {$L^p$}-quasi-contractivity of
  semigroups.
\newblock {\em Arch.\ Math.}, 98:341--353, 2012.

\bibitem[Ouh05]{Ouh05}
E.M. Ouhabaz.
\newblock {\em Analysis of {H}eat {E}quations on {D}omains}, volume~30 of {\em
  Lond.\ Math.\ Soc.\ Monograph Series}.
\newblock Princeton Univ.\ Press, Princeton, NJ, 2005.

\bibitem[Rel54]{Rel54}
F.~Rellich.
\newblock {Halbbeschr{\"a}nkte Differentialoperatoren h{\"o}herer Ordnung}.
\newblock In {\em Proc.\ Int.\ Congress of Mathematicians (Amsterdam 1954)},
  volume~3, pages 243--250, Amsterdam, 1954. North-Holland.

\bibitem[Rot83]{Rot83}
J.-P.\ Roth.
\newblock Spectre du laplacien sur un graphe.
\newblock {\em C.\ R.\ Acad.\ Sci.\ Paris S{\'e}r.\ I Math.}, 296:793--795,
  1983.

\bibitem[Sch12]{Sch12}
K.~Schmüdgen.
\newblock {\em {Unbounded Self-adjoint Operators on Hilbert Space}}, volume 265
  of {\em Graduate Texts in Mathematics}.
\newblock Springer-Verlag, Berlin, 2012.

\bibitem[SSO92]{SuaSilOpp92}
A.~Su{\'a}rez, R.~Silbey, and I.~Oppenheim.
\newblock Memory effects in the relaxation of quantum open systems.
\newblock {\em J.\ Chem.\ Phys.}, 97:5101--5107, 1992.

\end{thebibliography}
\end{document}